\theoremstyle{plain}
\newtheorem{theorem}{Theorem}[section]
\newtheorem{lemma}[theorem]{Lemma}
\newtheorem{corollary}[theorem]{Corollary}
\newtheorem{proposition}[theorem]{Proposition}
\theoremstyle{definition}
\newtheorem{remark}[theorem]{Remark}
\newtheorem{example}[theorem]{Example}
\newtheorem{question}{Question}
\newcommand{\ad}{\textup{ad}}
\newcommand{\B}{\mathcal{B}}
\newcommand{\BH}{\mathcal{B(H)}}
\newcommand{\BPH}{\mathcal{B}_p\mathcal{(H)}}
\newcommand{\bC}{\mathbb{C}}
\newcommand{\cH}{{\mathcal{H}}}
\newcommand{\cK}{\mathcal{K}}
\newcommand{\cL}{\mathcal{L}}
\newcommand{\cI}{\mathcal{I}}
\newcommand{\CO}{\mathcal{O}}
\newcommand{\KH}{\mathcal{K(H)}}
\newcommand{\iso}{\textup{iso}}
\newcommand{\la}{\langle}
\newcommand{\ra}{\rangle}
\newcommand{\ran}{\textup{ran}}
\newcommand{\SC}{\mathcal{S}_C}
\newcommand{\SCP}{\mathcal{S}_{C,p}}
\newcommand{\tr}{\textup{tr}}
\begin{document}

\title[The orthogonal Lie algebra of operators]
{The orthogonal Lie algebra of operators: ideals and derivations}

\author[Q. Bu]{Qinggang Bu}
\address{Institute of Mathematics\\Jilin University\\Changchun 130012\\P. R. China}
\email{buqg18@mails.jlu.edu.cn}

\author[S. Zhu]{Sen Zhu}
\address{Department of Mathematics\\Jilin University\\Changchun 130012\\P. R. China}
\email{zhusen@jlu.edu.cn}

\date{\today}

\subjclass[2010]{Primary 46L70, 47B99; Secondary 47B47, 17B65}

\keywords{orthogonal Lie algebra, Lie ideal, derivation, skew-symmetric operator, skew-symmetric matrix}

\begin{abstract}
We study in this paper the infinite-dimensional orthogonal Lie algebra $\mathcal{O}_C$ which consists of all bounded linear operators
$T$ on a separable, infinite-dimensional, complex Hilbert space $\mathcal{H}$ satisfying $CTC=-T^*$, where $C$ is a conjugation on $\mathcal{H}$.
By employing results from the theory of complex symmetric operators and skew-symmetric operators, we determine the Lie ideals of $\mathcal{O}_C$ and their dual spaces. We study derivations of
$\mathcal{O}_C$ and determine their spectra. These results complete some results of P. de la Harpe and provide interesting contrasts between $\mathcal{O}_C$ and the algebra $\mathcal{B(H)}$ of
all bounded linear operators on $\mathcal{H}$.\end{abstract}

\maketitle
%%%%%%%%%%%%%%%%%%%%%%%%%%%5

 \section{Introduction}

The study of skew-symmetric matrices (i.e., those matrices $M$ satisfying $M+M^{tr}=0$, where $M^{tr}$ denotes the transpose of $M$) can be traced back to the work of L.-K. Hua on automorphic functions \cite{Hua,Hua1}, N. Jacobson on projective
geometry \cite{Jacobson}, and C. Siegel on symplectic
geometry \cite{Siegel}.
Skew-symmetric matrices arise naturally in partial differential equations \cite{Ralston}, differential geometry \cite{DG},  algebraic geometry \cite{Faenzi} and many other mathematical disciplines.
In the finite-dimensional Hilbert space of even dimension, there
is a natural one to one correspondence between the set of all non-singular
skew-symmetric matrices and the set of all sympletic forms on the Hilbert
space (\cite[page 16]{Kirillov}). As a classical finite-dimensional Lie algebra over the complex field $\bC$, the orthogonal Lie algebra $\textsl{so}(n,\bC)$
consists of all $n\times n$ skew-symmetric complex matrices, that is,
\[\textsl{so}(n,\bC)=\{X\in M_n(\bC): X+X^{tr}=0\},\]
where $M_n(\bC)$ denotes the set of all $n\times n$ complex matrices.
It is known that  $\textsl{so}(n,\bC)$ is the Lie algebra of the complex orthogonal Lie group
\[SO(n,\bC)=\{X\in M_n(\bC): XX^{tr}=I, \textup{det} X=1 \}\]
(see \cite[page 41]{Hall} or  \cite[page 341]{Helgason} for more details), and plays important roles in the classification of semi-simple Lie algebras.

Skew-symmetric matrices are playing important roles in quantum physics. In particular, they are closely related to the noncommutative spacetimes.
A noncommutative spacetime {\cite{03spacetime}} is defined
by the Hermitian generators $\hat x_i$ of a noncommutative $C^*$-algebra of ``functions on spacetime'' which obey the commutation relations
$$[\hat x_i, \hat x_j] = \textmd{i}\theta^{ij},$$
where $\theta^{ij}$ is a constant, real-valued skew-symmetric $d\times d$ matrix
($d$ is the dimension of spacetime).
There are many quantum problems based on the noncommutative space {\cite{gravity,dynamics,Renormalisation}}. In \cite{Kahng}, skew-symmetric matrices are used to construct $C^*$-algebraic locally compact quantum groups, which are deformation quantizations of Poisson-Lie groups induced by skew-symmetric matrices. In addition to the above, skew-symmetric matrices also appeared in noncommutative extensions of quantum mechanics
{\cite{weyl2,angular}}.

In the realm of applied mathematics, skew-symmetric matrices appear in the study of
coding theory \cite{Pinero}, cryptography \cite{CaoHu}, signal theory \cite{Agaian} and sampling theory \cite{Wisliceny}.
Especially many scientific and engineering applications give rise to eigenvalue problems for which the matrices are skew-symmetric.
So there are numerous papers devoted to computation of the eigenvalues of skew-symmetric matrices (see \cite{SIM-3,SIM-5}).

In view of the ubiquitous roles that skew-symmetric matrices play in various scientific branches,
naturally one may wish to develop a systematic theory of skew-symmetric matrices.
Skew-symmetric matrices as a whole have not received enough attention,
since only a small amount of work in the literature specialized in skew-symmetric matrices. The reader is referred to \cite{Gantmacher} for some basic properties of
skew-symmetric matrices. In \cite{Zagorodnyuk,Zagorodnyuk11,LiZhu}, an operator-theoretic approach to the structure of skew-symmetric matrices was developed.
To proceed, we introduce some notation and terminology.

Let $\cH$ be a complex, separable Hilbert space endowed with an inner product $\la\cdot,\cdot\ra$, and $\BH$ the algebra of all bounded linear operators on $\cH$. An operator $T\in \BH$ is said to be {\it skew-symmetric} if
$CTC=-T^*$ for some conjugation $C$ on $\cH$. Recall that a conjugate-linear map $C:\cH\rightarrow \cH$ is called a {\it conjugation} if $C$ is invertible with $C^{-1}=C$ and $\langle Cx, Cy\rangle=\langle y,x\rangle$ for all $x, y\in\cH$.

We remark that the term ``skew-symmetric" stems from the fact that each skew-symmetric operator on $\cH$ can be written as a skew-symmetric matrix (possibly infinite-dimensional) relative to an orthonormal basis of $\cH$.
In fact, given a conjugation $C$ on $\cH$, there exists an orthonormal basis~ $\{e_n\}_{n=1}^\infty$ of $\cH$ such that $Ce_n=e_n$ for all $n$ (see e.g. \cite[Lem. 2.11]{Gar14}).
Thus the matrix representation of an operator $X\in\BH$ relative to $\{e_n\}$ is $(a_{i,j})_{i,j\geq 1}$, where $a_{i,j}=\la Xe_j,e_i\ra$.
Then $CXC=-X^*$ if and only if
$$\la e_i,CXCe_j\ra=-\la e_i, X^*e_j\ra, \ \ \ \forall i,j\geq 1,$$ that is,
$$\la Xe_j,e_i\ra=-\la Xe_i,e_j\ra, \ \ \ \forall i,j\geq 1,$$ or equivalently $a_{i,j}=-a_{j,i}$ for all $i,j\geq 1$.
For the conjugation $C$, we denote
$$\CO_C=\{X\in\BH: CXC=-X^*\}.$$ Then, by the preceding discussion, $\CO_C$ is exactly the set of all operators $X$ on $\cH$ whose matrix representation with respect to $\{e_n\}$ is skew-symmetric.
Thus $\CO_C$ is exactly $\textsl{so}(n,\bC)$ when $\dim\cH=n$; if $\dim\cH=\infty$, then $\CO_C$ is an infinite analogue of $\textsl{so}(n,\bC)$.
This also shows that skew-symmetric operators is a generalization of skew-symmetric matrices in the setting of Hilbert space.

We remark that the term ``skew-symmetric operator" in literature is also used to denote linear operators $A$ (usually unbounded) densely defined on Hilbert spaces satisfying
\[\la Ax,y\ra=-\la x, Ay\ra\] for all $x,y$ in the domain of $A$ (see \cite{nelson,Muhly}). Obviously, this is quite different from the notion that we are discussing.

In the last decade, there has been growing interest in the study of skew-symmetric operators; see \cite{Ptak,Zagorodnyuk11,ZhuCAOT,2016oam,2017oam}.
In particular, skew-symmetric normal operators, partial isometries, compact operators and weighted shifts
are classified \cite{LiZhou,LiZhu,ZhuBJMA2015}.
Also we remark that the study of skew-symmetric operators is closely related to an important class of operators called complex symmetric operators (see Section \ref{S:LieIdeal} for the definition).
The reader is referred to \cite{LiZhu,2017oam} for more details.

The aim of the present paper is to study $\CO_C$ in the setting of Lie algebra for $C$ a conjugation on a separable, infinite-dimensional, complex Hilbert space $\cH$.
This is inspired by the observation that $\CO_C$ is a linear subspace of $\BH$ (closed in the weak operator topology), and is closed
under the Lie product
\[[X,Y]=XY-YX \ \ \ \textup{for all}\ \ X,Y\in\BH.\]  Thus, under the Lie product $[\cdot,\cdot]$, $\CO_C$ becomes a Lie algebra.
Hence, by the proceeding discussion, $\CO_C$ is an infinite-dimensional analogue of $\textsl{so}(n,\bC)$ in the setting of Hilbert space.
We call $\CO_C$ an infinite-dimensional orthogonal Lie algebra. It is known that $\CO_C$ is the Lie algebra of some Lie group (see \cite[Theorem 3]{Harpe}).

In \cite{Harpe}, P. de La Harpe discussed in detail many elementary aspects of $\CO_C$ (and several other classical Lie algebras of linear operators). Topics treated
include ideals, derivations, real forms, automorphisms and so on. Also, $\CO_C$ has been studied under the name of Cartan factor of type III for many years.
For example, $\CO_C$ is a concrete example of $J^*$-algebras. The latter was introduced and studied by L. Harris \cite{harris}
as a generalization of $C^*$-algebras. It was shown that basic theorems for $C^*$-algebras, such as functional calculus and the Kaplansky density theorem, can be generalized to $J^*$-algebras and hence to $\CO_C$.
In their paper \cite{Bunce} , L. Bunce, B. Feely and R. Timoney studied operator space structures of $\CO_C$ and some other type of Cartan factors as $JC^*$-triples.
An explicit construction of a universal ternary ring of operators (TRO) generated by $\CO_C$ was given. Also the universally reversibility
of $\CO_C$ was proved. All these work suggests a rich structure theory of $\CO_C$.

The authors' renewed interest in the orthogonal Lie algebra $\CO_C$ is connected with an effort to develop the theory of skew-symmetric operators.
This may provide a Lie algebraic approach to skew-symmetric operators. On the other hand, results concerning skew-symmetric
operators will in turn promote the study of orthogonal Lie algebras and their applications. In fact, one shall see later that the main results of this paper rely heavily on operator-theoretic arguments.

Given an algebraic object, the importance of the study towards its ideals is obvious.
The first aim of this paper is to describe the Lie ideal structure of $\CO_C$. A linear manifold $\mathcal{ L}$  in $\CO_C$ is called a Lie ideal of $\CO_C$ if $[A,X]\in  \mathcal{L}$ for every $A\in \CO_C$ and $X\in \mathcal{ L}$. In this paper linear manifolds are not assumed closed in  any topology. Note that the ideal structure of $\textsl{so}(n,\bC) (n\geq 1)$ has been clearly described (see e.g. \cite[Chapter 12]{Erdman}).  In fact, $\textsl{so}(n,\bC)$ has no nontrivial ideal unless $n=4$.

It was proved in \cite[page 78]{Harpe} that each nontrivial Lie ideal $\mathcal{L}$ of $\CO_C$ satisfies
$[\CO_C\cap\mathcal{F(H)}]\subset \mathcal{L}\subset [\CO_C\cap\KH]$ (see also \cite{Omori}), where $\mathcal{F(H)}$ is the set of all finite-rank operators on $\cH$ and $\KH$ is the set of all compact operators on $\cH$.  Thus, like $\BH$, $\CO_C$ will have many nontrivial ideals such as those induced by Schatten-$p$ classes.
Indeed, each associative ideal $\mathcal{I}$ of $\BH$ induces a Lie ideal of $\CO_C$, that is, $\CO_C\cap\mathcal{I}$.
By an associative ideal of $\BH$, we mean a two-sided ideal of $\BH$
under the usual multiplication of operators.
Thus it is natural to ask

\begin{question}\label{Q:main}
Is every Lie ideal of $\CO_C$ the intersection of $\CO_C$ and some associative ideal of $\BH$?
\end{question}

Note that any associative subalgebra of $\BH$ is a Lie algebra under the Lie product $[\cdot,\cdot]$.
In many cases, there is a close connection between the Lie ideal structure and the associative ideal structure of an associative subalgebra of $\BH$. This has been investigated by many people for associative subalgebras of $\BH$, such as $\BH$ \cite{Fong}, certain von Neumann algebras \cite{Meirs} and triangular operator algebras \cite{Hudson}.

The first result of this paper is the following theorem which classifies Lie ideals of $\CO_C$ and gives a positive answer to Question \ref{Q:main}.

\begin{theorem}\label{T:Lie}
Let $C$ be a conjugation on $\cH$ and $\mathcal{L}\subset\CO_C$. Then $\mathcal{L}$ is a Lie ideal of $\CO_C$ if and only if there exists an associative ideal $\cI$ of $\BH$ such that $\mathcal{L}=\cI\cap\CO_C$.
\end{theorem}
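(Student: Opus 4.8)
The plan is to prove the nontrivial direction: given a Lie ideal $\mathcal{L}$ of $\CO_C$, we must produce an associative ideal $\cI$ of $\BH$ with $\mathcal{L}=\cI\cap\CO_C$. The natural candidate is $\cI := \{A\in\BH : A\cH \subset \overline{\text{span}}\,\{\text{ranges of elements of }\mathcal{L}\}\text{-type condition}\}$; more concretely, I would let $\cI$ be the smallest associative ideal of $\BH$ containing $\mathcal{L}$, equivalently the linear span of $\{BXD : X\in\mathcal{L},\ B,D\in\BH\}$. Then $\mathcal{L}\subset\cI\cap\CO_C$ is automatic, and the whole problem reduces to the reverse inclusion $\cI\cap\CO_C\subset\mathcal{L}$. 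Since the associative ideals of $\BH$ are totally ordered and well understood (either $\{0\}$, or $\BH$, or sit between $\mathcal{F(H)}$ and $\KH$, i.e. $\BH$ itself, $\KH$, the Schatten classes $\mathcal{B}_p(\mathcal{H})$, and more generally symmetrically-normed ideals), and since by de la Harpe's result $[\CO_C\cap\mathcal{F(H)}]\subset\mathcal{L}\subset[\CO_C\cap\KH]$ for every nontrivial $\mathcal{L}$, the structure is already heavily constrained: $\cI$ will be an ideal trapped between $\mathcal{F(H)}$ and $\KH$, so $\cI\cap\CO_C$ lies inside $\CO_C\cap\KH$.

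The heart of the argument is therefore a \emph{rank-one / finite-rank analysis}: understand exactly which skew-symmetric operators of small rank lie in a given Lie ideal $\mathcal{L}$, and show that membership of a single such operator forces membership of all skew-symmetric operators of the same "size". Working in the orthonormal basis $\{e_n\}$ with $Ce_n=e_n$, the elements of $\CO_C$ are the skew-symmetric matrices; the simplest nonzero ones are of the form $x\otimes Cy - y\otimes Cx$ (the skew-symmetrization of a rank-one operator), which are rank two. I would compute Lie brackets $[A, x\otimes Cy - y\otimes Cx]$ for $A\in\CO_C$ and show these brackets generate, under iteration, every finite-rank skew-symmetric operator whose range sits inside a prescribed subspace — this is where the irreducibility-type phenomena of $\CO_C$ (as opposed to $\textsl{so}(4,\bC)$, which splits) enter. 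The key computational input is that one can move mass around: brackets let us replace the vectors $x,y$ by arbitrary vectors. Concretely, I expect a lemma of the shape: if $\mathcal{L}$ contains one nonzero element, then $\mathcal{L}\supset\CO_C\cap\mathcal{F(H)}$ — recovering and sharpening the lower bound — and more quantitatively, if $\mathcal{L}$ contains an operator $T$ then it contains every $S\in\CO_C$ with $S = ATB$ (suitably interpreted so as to stay inside $\CO_C$), which is precisely what ties $\mathcal{L}$ to the associative ideal it generates.

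Once the finite-rank transfer is in hand, the passage to the general (non-closed, possibly infinite-rank) case is the final step: given $T\in\cI\cap\CO_C$, write $T$ as a (norm- or SOT-) combination $\sum B_i X_i D_i$ with $X_i\in\mathcal{L}$, and push each summand into $\mathcal{L}$ using the bracket identities; the subtlety is that $B_iX_iD_i$ need not itself be skew-symmetric, so one symmetrizes and uses that $T$ \emph{is} skew-symmetric together with the additive structure of $\mathcal{L}$. Throughout, I would lean on the classification results for skew-symmetric compact operators and weighted shifts cited in the introduction to handle the infinite-rank bookkeeping. The main obstacle I anticipate is exactly the finite-rank generation lemma: ensuring that the Lie-bracket orbit of a single skew-symmetric operator is large enough inside $\CO_C$, which requires a careful case analysis (e.g. ruling out the $\textsl{so}(4,\bC)$-type degeneracy in the infinite-dimensional setting) and explicit matrix manipulations of $2\times 2$ and $3\times 3$ skew-symmetric blocks. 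Everything after that — identifying which associative ideal appears, and checking $\cI\cap\CO_C=\mathcal{L}$ rather than something larger — should follow from the known total order on associative ideals of $\BH$ together with the fact that $\cI$ was \emph{defined} as the one generated by $\mathcal{L}$.
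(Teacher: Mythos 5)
Your overall strategy---let $\cI$ be the associative ideal generated by $\cL$ and prove $\cI\cap\CO_C\subset\cL$ by a direct generation argument---is a reasonable plan, but the proposal leaves precisely the hard step unproved, and that step is where the real content of the theorem lies. The ``finite-rank generation lemma'' you anticipate (membership of $T$ in $\cL$ forces membership of every $S\in\CO_C$ of the form $ATB$, ``suitably interpreted'') is exactly what is not obvious: a Lie bracket $[A,X]$ with $A\in\CO_C$ produces $AX-XA$, not $AXB$, and the auxiliary operators $A$ are constrained to lie in $\CO_C$, so the standard identities used to prove Fong--Miers--Sourour for $\BH$ do not transfer. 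You acknowledge this is ``the main obstacle'' but offer no mechanism to overcome it. The paper circumvents the issue entirely by a different device: it writes $\cH=\cK^{(2)}$ with $C=\left(\begin{smallmatrix}0&D\\D&0\end{smallmatrix}\right)$, shows (Proposition \ref{P:CHARA}) that the $(1,1)$-corner $\cL_0$ of a Lie ideal $\cL$ of $\CO_C$ is a Lie ideal of the full associative algebra $\B(\cK)$, invokes the known classification of Lie ideals of $\B(\cK)$ (Lemma \ref{L:LieIdealOfBH}), and then recovers the off-diagonal corners using results on complex symmetric operators (Lemmas \ref{L:SimOrbCSO}--\ref{L:Combination}). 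That reduction to the associative case is the idea your sketch is missing.

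A second, concrete error: the two-sided ideals of $\BH$ are \emph{not} totally ordered (the proper ones correspond to solid symmetric subsets of $c_0$ via singular-value sequences, and incomparable pairs exist), so the final step of your argument --- ``identifying which associative ideal appears \dots should follow from the known total order on associative ideals of $\BH$'' --- rests on a false premise. Relatedly, since $\cL$ is not assumed closed in any topology, the proposed passage from finite-rank elements to general $T\in\cI\cap\CO_C$ cannot use norm or SOT limits; it must be purely algebraic, and the symmetrization of the non-skew-symmetric summands $B_iX_iD_i$ back into $\cL$ is asserted rather than carried out. As it stands the proposal is a plausible research plan, not a proof.
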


Although $\CO_C$ is not an associative algebra, the proceeding theorem shows that the Lie ideal structure is induced by the associative ideal structure of $\BH$. This reflects the close connection between $\CO_C$ and $\BH$, and hence suggests a rich structure of $\CO_C$.
In Subsection 2.1, we show that $\BH$ is Lie isomorphic to a Lie subalgebra of $\CO_C$. This means that $\BH$ is also ``contained" in $\CO_C$ and hence exhibits certain universality of $\CO_C$.
In view of these results, it is natural to ask whether some classical or older facts about $\BH$ still hold or have analogues in $\CO_C$.
Obtaining skew-symmetric analogues of some classical results about $\BH$ should provide an interesting contrast between $\CO_C$ and $\BH$. %This is one of our future goals.

In Subsection 2.2, we shall explore those norm ideals of $\CO_C$ induced by Schatten $p$-classes.

The Schatten $p$-class of compact operators on $\cH$ is denoted by $\BPH$, $1\leq p<\infty$.
It is well known that $\BPH$ is a Banach space under $p$-norm $\|\cdot\|_p$.
Moreover, $\BH$ is isometrically isomorphic to the  dual space of  $\B_1(\cH)$, and $\BPH$ is isometrically isomorphic to the  dual space of $\B_q(\cH)$ for $1< p, q<\infty$ with $\frac{1}{p}+\frac{1}{q}=1$.
For convenience, we denote $\B_\infty(\cH)=\KH$. So $\|\cdot\|_\infty$ means the operator norm. Then $\B_1(\cH)$ is isometrically isomorphic to the  dual space of  $\B_\infty(\cH)$.
The reader is refereed to \cite{Schatten} or \cite{Ringrose} for more details.

We shall explore Schatten $p$-classes in $\CO_C$.
For $p\in[1,\infty]$, we denote $\CO_{C,p}=\CO_C\cap\BPH$. Thus $\CO_{C,p}$ is closed in $\|\cdot\|_p$-norm, and is a Lie ideal of $\CO_C$. We establish in Subsection 2.2 the dual relations among $\CO_C$ and its Lie ideals $\CO_{C,p} (p\in[1,\infty])$ (see Proposition \ref{P:dual}).

The other aim of this section is to study derivations of $\CO_C$.
A linear map $\Phi:\CO_C\rightarrow\CO_C$ is called a {\it derivation} of $\CO_C$ if
\[\Phi([A, B])= [\Phi(A), B]+[A, \Phi (B)] \ \ \textup{~for all~} A, B\in\CO_C.\]

For $T\in\CO_C$, we can define a linear map $\ad_T$ on $\CO_C$ as $\ad_T(X)=[T, X]$ for all $X\in\CO_C$.
It is easy to check that $\ad_T$ is a derivation of $\CO_C$. On the other hand, by \cite[Chapter 2]{Harpe}, each derivation $\Phi$ of $\CO_C$ is inner, that is, $\Phi$ admits the form $\Phi=\ad_T$ for some $T\in\CO_C$.
Note that $\ad_T$ is a bounded linear operator on $\CO_C$. Thus one can see that $\ad: X\mapsto \ad_X$ is a
Lie algebra homomorphism from $\CO_C$ to $\B(\CO_C)$, the space of bounded linear operators on $\CO_C$.
The map $X\mapsto\ad_X$ is sometimes called the {\it adjoint map} or {\it adjoint representation}.

The notion of derivation plays important roles in the study of Lie algebras.
In particular, the Engel theorem asserts that a finite-dimensional Lie algebra is nilpotent
if and only if all its inner derivations are nilpotent. In infinite-dimensional case, many efforts are devoted to
the study of quasinilpotent Banach-Lie algebras (that is, all their inner derivations are quasinilpotent as bounded operators); see \cite{quasi3,quasi4,quasi1,quasi2}. %So we are interested in the spectral theory of derivations of $\CO_C$.

On the other hand, since some interesting results concerning the structure and the spectra theory of skew-symmetric operators have been obtained (e.g., \cite{LiZhu,ZhuCAOT,2016oam,ZhuZhao}),
it is natural and interesting to study the spectrum of $\ad_T$ in comparison with that of $T$ for $T\in\CO_C$. Also, note that each $\ad_T$ for $T\in\CO_C$ admits a continuous extension $\delta_T$ to $\BH$ defined by
$$\delta_T:X\longmapsto TX-XT.$$ The spectrum of $\delta_T$ and its different parts were determined by M. Rosenblum and D. Kleinecke
\cite{Lumer,Rosenblum}, C. Davis and P. Rosenthal \cite{DaviRosen}, and L. A. Fialkow \cite{Fialkow}. These motivate our present study towards the spectral theory of derivations of $\CO_C$.

In Section 3, we completely determine the spectrum of $\ad_T$ and its various parts for $T\in\CO_C$. This part depend on those results obtained in Subsection 2.2 concerning
the dual relations between Schatten $p$-classes in $\CO_C$. Our result shows that the spectrum of $\ad_T$ for $T\in\CO_C$ is obtained from the spectrum $\sigma(\delta_T)$ of $\delta_T$ by eliminating some of 
its isolated points (see Theorem \ref{T:SpecDeriva}).
Also we find for $T\in\CO_C$ that the left spectrum, the right spectrum, the approximate point spectrum and the approximate defect spectrum of $\ad_T$ all coincide. This phenomenon also happens to $\delta_T$ (see Remark \ref{R:comparison}).

For $T\in\CO_C$ and $p\in[1,\infty]$, note that $\CO_{C,p}$ is invariant under $\ad_T$.
Denote $\ad_{T,p}=\ad_T|_{\CO_{C,p}}$.  We view $\ad_{T,p}$ as a linear operator on $(\CO_{C,p}, \|\cdot\|_p)$. By {\cite[Thm. 2.3.10]{Ringrose}}, $\ad_{T,p}$ is bounded. It is natural to study the spectrum of $\ad_{T,p}$ in comparison with $\ad_T$.
Our result shows that $\sigma(\ad_{T,p})$ and its various parts coincide with that of $\sigma(\ad_{T})$ (see Theorem \ref{T:IdealSpecDeriva}).
As a corollary, we determine the spectra of derivations on $\textsl{so}(n,\bC)$.

\begin{theorem}\label{T:DerivFinDim}
Let $T\in \textsl{so}(n,\bC)$.  If $n\geq 2$, then $\sigma(\ad_T)=\{\lambda+\mu: \lambda,\mu\in\sigma(T)\}\setminus\{2z:z\in\Xi(T)\}$, where
\begin{align*}
  \Xi(T)=\{z\in\sigma(T): &~ \textup{rank}(T-z)^2=n-1 ~\textup{and there exist no } \\
  & \textup{distinct}~z_1,z_2\in \sigma(T) ~\textup{with}~ 2z=z_1+z_2\}.
\end{align*}
\end{theorem}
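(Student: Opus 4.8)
The plan is to identify $\ad_T$, up to similarity, with the derivation that $T$ induces on the exterior square $\Lambda^2\bC^n$, and then to read off the spectrum from a triangular form of $T$. Concretely, writing $E_{ij}$ for the matrix units and $\{e_i\}_{i=1}^{n}$ for the standard basis of $\bC^n$, the linear bijection $\phi\colon\Lambda^2\bC^n\to\textsl{so}(n,\bC)$ determined by $\phi(e_i\wedge e_j)=E_{ij}-E_{ji}$ satisfies $[T,\phi(\omega)]=\phi\bigl((\Lambda^2 T)\omega\bigr)$ for every $\omega\in\Lambda^2\bC^n$, where $(\Lambda^2 T)(e_i\wedge e_j)=(Te_i)\wedge e_j+e_i\wedge(Te_j)$; the verification uses only the defining relation $T^{tr}=-T$. (Equivalently, one checks that $\delta_T\colon X\mapsto TX-XT$ leaves invariant the splitting of $M_n(\bC)$ into symmetric and skew-symmetric matrices, and $\ad_T$ is its restriction to the skew-symmetric summand.) Hence $\sigma(\ad_T)=\sigma(\Lambda^2 T)$.

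To compute $\sigma(\Lambda^2 T)$ I would triangularize $T$: choose a basis of $\bC^n$ in which $T$ is upper triangular with diagonal $\alpha_1,\dots,\alpha_n$, the eigenvalues of $T$ repeated according to algebraic multiplicity. Ordering the induced basis $\{e_i\wedge e_j:i<j\}$ of $\Lambda^2\bC^n$ so that $e_a\wedge e_b$ precedes $e_c\wedge e_d$ whenever $b<d$, or $b=d$ and $a<c$, a direct inspection of the formula for $\Lambda^2 T$ shows that in this ordered basis $\Lambda^2 T$ is again upper triangular, with diagonal entries $\alpha_i+\alpha_j$ for $1\le i<j\le n$. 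Therefore $\sigma(\ad_T)=\{\,\alpha_i+\alpha_j:1\le i<j\le n\,\}$.

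It remains to rewrite this set in the asserted form. Every $\lambda+\mu$ with $\lambda\ne\mu$ in $\sigma(T)$ already lies in it, since $\lambda$ and $\mu$ occur among $\alpha_1,\dots,\alpha_n$ with distinct indices; so only the points $2z$ with $z\in\sigma(T)$ require attention. Such a point belongs to $\{\alpha_i+\alpha_j:i<j\}$ exactly when either $z$ occurs at least twice among the $\alpha_k$, that is $z$ has algebraic multiplicity $\ge 2$, or there exist distinct $z_1,z_2\in\sigma(T)$ with $z_1+z_2=2z$. Consequently $2z\notin\sigma(\ad_T)$ precisely when $z$ has algebraic multiplicity $1$ and no distinct $z_1,z_2\in\sigma(T)$ satisfy $2z=z_1+z_2$. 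Finally, $z$ has algebraic multiplicity $1$ if and only if $\textup{rank}(T-z)^2=n-1$, because $\ker(T-z)^2$ lies inside the generalized eigenspace of $z$ and has dimension $\sum_k\min(s_k,2)$, where $s_1\ge s_2\ge\cdots$ are the sizes of the Jordan blocks of $T$ at $z$, and this sum equals $1$ iff there is a single block, of size $1$. Comparing with the definition of $\Xi(T)$ then yields $\sigma(\ad_T)=\{\lambda+\mu:\lambda,\mu\in\sigma(T)\}\setminus\{2z:z\in\Xi(T)\}$, the case $n=1$ being trivial.

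The one genuinely delicate point is the bookkeeping in the last step: aligning the two distinctness requirements and, above all, the equivalence between ``$z$ is a simple eigenvalue of $T$'' and ``$\textup{rank}(T-z)^2=n-1$''. One may alternatively present the argument as a specialization of Theorems \ref{T:SpecDeriva} and \ref{T:IdealSpecDeriva}: when $\dim\cH=n<\infty$ every Schatten ideal of $\CO_C$ coincides with $\CO_C\cong\textsl{so}(n,\bC)$, and $\sigma(\delta_T)=\{\lambda+\mu:\lambda,\mu\in\sigma(T)\}$ by the classical Rosenblum-type formula, so the only content is to identify which isolated points of $\sigma(\delta_T)$ are deleted — which is precisely the exterior-square computation above.
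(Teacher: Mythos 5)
Your proof is correct, but it takes a genuinely different route from the paper's. The paper obtains this theorem as an immediate specialization of Theorem \ref{T:SpecDeriva}: since $\sigma(T)$ is finite, every spectral point is isolated and lies on $\partial\sigma(T)$, so conditions (b) and (c) in the general definition of $\Xi(T)$ hold automatically, condition (d) becomes the stated distinctness condition, and condition (a), $\dim\cH(z;T)=1$, is equivalent to $\dim\ker(T-z)^2=1$, i.e.\ to $\textup{rank}(T-z)^2=n-1$; the whole proof is three lines once the infinite-dimensional machinery (approximate point spectra, Schatten duality, the Riesz decomposition) is in place. You instead give a self-contained linear-algebra argument: the standard isomorphism between the adjoint representation of $\textsl{so}(n,\bC)$ and $\Lambda^2\bC^n$ (your intertwining $[T,\phi(\omega)]=\phi((\Lambda^2T)\omega)$ is verified correctly and does use $T^{tr}=-T$), followed by triangularization to get $\sigma(\ad_T)=\{\alpha_i+\alpha_j: i<j\}$, and then the combinatorial bookkeeping, which you carry out correctly (including the equivalence of simplicity of $z$ with $\textup{rank}(T-z)^2=n-1$ via $\dim\ker(T-z)^2=\sum_k\min(s_k,2)$). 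Your approach buys elementarity and a bit more information (the eigenvalues of $\ad_T$ with multiplicity), and it sidesteps the cosmetic issue that Theorem \ref{T:SpecDeriva} is formally stated for infinite-dimensional $\cH$; the paper's approach buys brevity, given that the heavy lifting has already been done, and uniformity with the infinite-dimensional statement. Your closing alternative sketch is essentially the paper's actual proof.
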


The preceding result indicates to certain extent that the operator-theoretic approach to skew-symmetric matrices is applicable. Indeed the reader will find that the proofs of main results rely heavily on some results from the theory of complex symmetric and skew-symmetric operators, which has received much attention since the seminal work of Garcia and Putinar \cite{Gar06}. Also, the authors believe that there will be concrete applications of the theory of skew-symmetric operators to other disciplines.

%, although this paper does not aim to provide some.

% the authors believe that there will be concrete applications of the theory of skew-symmetric operators to other disciplines,

%The preceding result indicates to certain extent that the operator-theoretic approach to skew-symmetric matrices is applicable.
%We remak that the approach to the present study is operator-theoretic.
% In fact, the proofs of main results rely heavily on some results
%from the theory of complex symmetric and skew-symmetric operators, which has received much attention since the seminal work of Garcia and Putinar \cite{Gar06}.
%The authors believe that there will be concrete applications of the theory of skew-symmetric matrices to other disciplines, although this paper does not aim to provide some.

In summary, results obtained in this paper complete some of P. de la Harpe's work \cite{Harpe}, and also provide some interesting contrasts between $\CO_C$ and $\BH$.
The authors are convinced that the Lie algebra $\CO_C$ deserves more study along this line.
It is our future goal to find more skew-symmetric analogues of some classical results about $\BH$.

The rest of this paper is organized as follows.
In Section 2, we shall determine ideals of $\CO_C$ and the dual spaces of $\CO_{C,p}$ for $p\in[1,\infty]$.
Section 3 is devoted to the characterization of the spectra of derivations of $\CO_C$ and $\CO_{C,p}$ for $p\in[1,\infty]$.

%%%%%%%%%%%%%%%%%%%%%%%%%%%%%%%%%%%%%%%%%%%%%%%%%%%%%%%%%%

\section{Lie ideals}\label{S:LieIdeal}

The aim of this section is to characterize the Lie ideals of $\CO_C$ for $C$ a conjugation on $\cH$.

The Lie ideal structures of Lie algebras are closely related to their classification theory, since there are so many notions in the theory of Lie algebras
connected to Lie ideals. For example, each finite-dimensional semisimple complex Lie algebra is the direct sum of some simple ideals \cite{Erdman}.
Theorem \ref{T:Lie} shows that the Lie ideal structure of $\CO_C$ depends on the associative ideal structure of $\BH$.
Also we shall prove later that $\BH$ is Lie isomorphic to a Lie subalgebra of $\CO_C$ (see Proposition \ref{P:universal}).
We feel that the results of this section may play some roles in the study toward Lie subalgebras of $\BH$.
%On the other hand, we shall prove later that $\BH$ is Lie isomorphic to a Lie subalgebra of $\CO_C$ (see Proposition \ref{P:universal}). So
%the classical theory of Operator Algebras may help to classify Lie subalgebras of $\BH$.
%On one hand,

%
%Given a Lie algebra, its ideals reflect much information concerning its structure. In fact,
%The results of this section have potential applications to the classification of Lie algebras of operators.

\subsection{Lie ideals of $\CO_C$}

The aim of this subsection is to prove Theorem \ref{T:Lie}.
We first make some preparation.

The following result can be verified directly.

\begin{lemma}\label{L:member}
Let $D$ be a conjugation on $\cH$ and $C=\begin{pmatrix}
0&D\\
D&0
\end{pmatrix}$. Then $C$ is a conjugation on $\cH\oplus\cH$ and \[\CO_C= \left\{ \begin{pmatrix}
A&E\\
F&-CA^*C
\end{pmatrix}:  A, E, F\in\BH ~\textup{with}~ E,F\in\CO_D\right\}.\]\end{lemma}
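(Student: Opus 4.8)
The plan is to verify the two claims in Lemma~\ref{L:member} directly, first checking that $C$ is a conjugation on $\cH\oplus\cH$ and then computing the defining condition $CXC=-X^*$ for a general operator $X\in\B(\cH\oplus\cH)$ written as a $2\times 2$ operator matrix.

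\textbf{Step 1: $C$ is a conjugation.} Writing a vector in $\cH\oplus\cH$ as $\binom{x}{y}$, we have $C\binom{x}{y}=\binom{Dy}{Dx}$, which is conjugate-linear since $D$ is. Then $C^2\binom{x}{y}=C\binom{Dy}{Dx}=\binom{D(Dx)}{D(Dy)}=\binom{x}{y}$ because $D^2=I$, so $C^{-1}=C$. For the isometry-type identity, $\la C\binom{x_1}{y_1}, C\binom{x_2}{y_2}\ra = \la Dy_1, Dy_2\ra + \la Dx_1, Dx_2\ra = \la y_2,y_1\ra + \la x_2,x_1\ra = \la \binom{x_2}{y_2},\binom{x_1}{y_1}\ra$, using that $D$ is a conjugation. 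Hence $C$ is a conjugation.

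\textbf{Step 2: identifying $\CO_C$.} Take $X=\begin{pmatrix}A&E\\ F&B\end{pmatrix}$ with $A,B,E,F\in\BH$. A direct block computation gives
\[
CXC=\begin{pmatrix}0&D\\ D&0\end{pmatrix}\begin{pmatrix}A&E\\ F&B\end{pmatrix}\begin{pmatrix}0&D\\ D&0\end{pmatrix}
=\begin{pmatrix}DBD & DFD\\ DED & DAD\end{pmatrix},
\]
where one must remember that $D$ is conjugate-linear, so e.g. the $(1,1)$-entry of $CXC$ really is the composition $D\circ B\circ D$ acting linearly; this is where a small amount of care is needed. On the other hand $-X^*=\begin{pmatrix}-A^*&-F^*\\ -E^*&-B^*\end{pmatrix}$. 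Comparing entries, $CXC=-X^*$ holds if and only if $DBD=-A^*$, $DAD=-B^*$, $DFD=-F^*$, and $DED=-E^*$. The first two conditions are equivalent to each other (apply $D\cdot D$ and take adjoints), and each says $B=-DA^*D=-CA^*C$ in the notation of the lemma. The last two say precisely $F\in\CO_D$ and $E\in\CO_D$. This yields exactly the claimed description of $\CO_C$.

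\textbf{Main obstacle.} There is no serious obstacle here — the lemma is elementary and the author even says it ``can be verified directly.'' The only point requiring attention is bookkeeping with the conjugate-linearity of $D$ when multiplying out the block matrices: one should think of $CXC$ as a genuine composition of conjugate-linear and linear maps rather than blindly applying matrix-multiplication formulas, and check that the resulting maps $x\mapsto DBDx$ etc.\ are indeed linear (they are, being compositions of two conjugate-linear maps with one linear map). Once that is kept straight, matching the four block entries against $-X^*$ and observing the redundancy between the $(1,1)$ and $(2,2)$ conditions finishes the proof.
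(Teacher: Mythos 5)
Your proof is correct and is precisely the direct verification the paper has in mind (the paper omits the argument entirely, remarking only that the lemma ``can be verified directly''). The block computation of $CXC$, the reduction to the four entrywise conditions, the observation that the $(1,1)$ and $(2,2)$ conditions are equivalent via $(DTD)^*=DT^*D$, and your reading of the lemma's entry $-CA^*C$ as $-DA^*D$ (consistent with Proposition~\ref{P:CHARA}) are all as intended.
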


\begin{proposition}\label{P:CHARA}
Let $D$ be a conjugation on $\cH$ and $C=\begin{pmatrix}
0&D\\
D&0
\end{pmatrix}$. Assume that $\mathcal{L}$ is a Lie ideal of $\CO_C$.
Set \begin{align}\label{E0}
  \mathcal{L}_0=\left\{A\in\BH: \exists X_1,X_2,X_3\in\BH ~\textup{with}~  \begin{pmatrix}
A&X_1\\
X_2&X_3
\end{pmatrix}\in\mathcal{L}\right\},
\end{align}\[ \Delta_1=\vee\{XY+YDX^*D: X\in \mathcal{L}_0, Y\in\CO_D\}\] and
\[ \Delta_2=\vee\{DX^*D Y+YX: X\in \mathcal{L}_0,  Y\in\CO_D\},\]
where $\vee$ denotes linear span.
Then
\begin{enumerate}
\item[(i)] $\mathcal{L}_0$ is a Lie ideal of $\BH$ and \begin{equation}\label{E2}
\mathcal{L}=\left\{ \begin{pmatrix}
A&E\\
F&-DA^*D
\end{pmatrix} : A\in \mathcal{L}_0, E\in\Delta_1, F\in\Delta_2\right\};
\end{equation}
\item[(ii)] $XY\in \mathcal{L}_0$ for $X\in\Delta_1$ and $Y\in\CO_D$.
\end{enumerate}
\end{proposition}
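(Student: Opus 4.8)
The strategy is to read everything off the $2\times 2$ block description of $\CO_C$ in Lemma~\ref{L:member}, using as probes the following members of $\CO_C$: for $B\in\BH$ the diagonal operator $\begin{pmatrix}B&0\\0&-DB^{*}D\end{pmatrix}$; the symmetry $J=\begin{pmatrix}I&0\\0&-I\end{pmatrix}$ (here $-DI^{*}D=-I$, since $D^{2}=I$); and for $Y\in\CO_D$ the two nilpotents $\begin{pmatrix}0&Y\\0&0\end{pmatrix}$ and $\begin{pmatrix}0&0\\Y&0\end{pmatrix}$. Each lies in $\CO_C$ by Lemma~\ref{L:member}, because $0\in\CO_D$. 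The algebraic bookkeeping is controlled by the identity $DX^{*}D=-X$ for $X\in\CO_D$, which follows at once from $DXD=-X^{*}$ and $D^{2}=I$.

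The first step is a decoupling step: if $M=\begin{pmatrix}A&E_{0}\\F_{0}&-DA^{*}D\end{pmatrix}\in\mathcal{L}$, then each of $\begin{pmatrix}A&0\\0&-DA^{*}D\end{pmatrix}$, $\begin{pmatrix}0&E_{0}\\0&0\end{pmatrix}$, $\begin{pmatrix}0&0\\F_{0}&0\end{pmatrix}$ lies in $\mathcal{L}$. Indeed $[M,J]=\begin{pmatrix}0&-2E_{0}\\2F_{0}&0\end{pmatrix}\in\mathcal{L}$, and a second application of $[\,\cdot\,,J]$ sends this to $\begin{pmatrix}0&4E_{0}\\4F_{0}&0\end{pmatrix}\in\mathcal{L}$; suitable linear combinations of these two isolate $\begin{pmatrix}0&E_{0}\\0&0\end{pmatrix}$ and $\begin{pmatrix}0&0\\F_{0}&0\end{pmatrix}$ in $\mathcal{L}$, and subtracting them from $M$ leaves $\begin{pmatrix}A&0\\0&-DA^{*}D\end{pmatrix}$. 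Writing $\mathcal{E}=\{E\in\CO_D:\begin{pmatrix}0&E\\0&0\end{pmatrix}\in\mathcal{L}\}$ and $\mathcal{F}=\{F\in\CO_D:\begin{pmatrix}0&0\\F&0\end{pmatrix}\in\mathcal{L}\}$ (obviously linear manifolds), this gives $\mathcal{L}=\bigl\{\begin{pmatrix}A&E\\F&-DA^{*}D\end{pmatrix}:A\in\mathcal{L}_{0},\,E\in\mathcal{E},\,F\in\mathcal{F}\bigr\}$. Thus (i) and (ii) reduce to three points: $\mathcal{L}_{0}$ is a Lie ideal of $\BH$; $\mathcal{E}=\Delta_{1}$ and $\mathcal{F}=\Delta_{2}$; and $XY\in\mathcal{L}_{0}$ for $X\in\Delta_{1}$, $Y\in\CO_D$.

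The easy inclusions come from direct commutator computations. Commuting $M$ with $\begin{pmatrix}B&0\\0&-DB^{*}D\end{pmatrix}$ and reading the $(1,1)$-entry gives $[B,A]\in\mathcal{L}_{0}$, so $\mathcal{L}_{0}$ is a Lie ideal of $\BH$. For $A\in\mathcal{L}_{0}$ we have $\begin{pmatrix}A&0\\0&-DA^{*}D\end{pmatrix}\in\mathcal{L}$ by decoupling; commuting it with $\begin{pmatrix}0&Y\\0&0\end{pmatrix}$, resp.\ $\begin{pmatrix}0&0\\Y&0\end{pmatrix}$, puts $\begin{pmatrix}0&AY+YDA^{*}D\\0&0\end{pmatrix}$, resp.\ $\begin{pmatrix}0&0\\DA^{*}DY+YA&0\end{pmatrix}$, into $\mathcal{L}$, whence $\Delta_{1}\subseteq\mathcal{E}$ and $\Delta_{2}\subseteq\mathcal{F}$. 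And for $E\in\mathcal{E}$, $Y\in\CO_D$, commuting $\begin{pmatrix}0&E\\0&0\end{pmatrix}$ with $\begin{pmatrix}0&0\\Y&0\end{pmatrix}$ gives $\begin{pmatrix}EY&0\\0&-YE\end{pmatrix}\in\mathcal{L}$, so $EY\in\mathcal{L}_{0}$; specializing $E\in\Delta_{1}\subseteq\mathcal{E}$ yields (ii), and symmetrically $YF\in\mathcal{L}_{0}$ for $F\in\mathcal{F}$, $Y\in\CO_D$.

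It remains to prove $\mathcal{E}\subseteq\Delta_{1}$ and $\mathcal{F}\subseteq\Delta_{2}$, and this is where the real idea enters: $\CO_D$ contains an invertible operator. Concretely, fixing an orthonormal basis $\{e_{n}\}$ with $De_{n}=e_{n}$ and setting $Ve_{2k-1}=e_{2k}$, $Ve_{2k}=-e_{2k-1}$, one gets $V\in\CO_D$ (its matrix relative to $\{e_{n}\}$ is skew-symmetric), with $V$ unitary and $V^{2}=-I$, so $V^{-1}=-V\in\CO_D$. Given $E\in\mathcal{E}$, the operator $X:=EV$ lies in $\mathcal{L}_{0}$ by the preceding step, hence the $\Delta_{1}$-generator $XV^{-1}+V^{-1}DX^{*}D$ lies in $\Delta_{1}$; using $DX^{*}D=(DV^{*}D)(DE^{*}D)=(-V)(-E)=VE$ and $V^{-1}=-V$, this generator equals $-EV^{2}-V^{2}E=2E$ (recall $V^{2}=-I$), so $E\in\Delta_{1}$. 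The same computation with $X:=VF$ and the $\Delta_{2}$-generator $DX^{*}DV^{-1}+V^{-1}X$ gives $2F$, so $F\in\Delta_{2}$. Hence $\mathcal{E}=\Delta_{1}$ and $\mathcal{F}=\Delta_{2}$, and feeding this back into the block description of $\mathcal{L}$ from the second paragraph completes (i); (ii) was obtained above. The step I expect to be the real obstacle is precisely this last one: one must supply the (standard, but essential) fact that $\CO_D$ admits an invertible skew-symmetric operator, e.g.\ a square root of $-I$, since it is exactly this that lets a single generator of $\Delta_{1}$ reproduce an arbitrary $E\in\mathcal{E}$ rather than only products $EV$. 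Everything else is careful but mechanical $2\times2$ block arithmetic, the conjugate-linearity of $D$ being the only genuine pitfall.
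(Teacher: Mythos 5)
Your proof is correct and follows essentially the same route as the paper's: decouple the diagonal and off-diagonal blocks of an element of $\mathcal{L}$ by commutators with explicit members of $\CO_C$, show $\mathcal{L}_0$ is a Lie ideal via diagonal probes, and then use an invertible element of $\CO_D$ together with the identity $2E=(EV)V^{-1}+V^{-1}D(EV)^{*}D$ to see that the off-diagonal parts are exactly $\Delta_1$ and $\Delta_2$. The only cosmetic differences are that you decouple with the symmetry $I\oplus(-I)$ where the paper uses iterated commutators against $\begin{pmatrix}0&0\\G&0\end{pmatrix}$ and $\begin{pmatrix}0&G^{-1}\\0&0\end{pmatrix}$, and that you explicitly construct the invertible $V\in\CO_D$ with $V^{2}=-I$ (a worthwhile addition, since the paper merely asserts that an invertible $G\in\CO_D$ can be chosen).
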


\begin{proof} (i) For $X\in\BH$, we write $X^t=DX^*D$.

{\it Claim 1.} If $A\in \mathcal{L}_0$, then $A\oplus(-A^t)\in \mathcal{L}$.

In view of (\ref{E0}), there exists an element $T$ of $\mathcal{L}$ with form
\[T=\begin{pmatrix}
A&E\\
F&-A^t
\end{pmatrix},\] where $E,F\in\BH$ with $E=-E^t$ and $F=-F^t$.

Choose an invertible operator $G\in\CO_D$. %.$G\in\BH$  with $G=-G^t$.
Set \[  Y_1=\begin{pmatrix}
0&0\\
G&0
\end{pmatrix},\ \  Y_2=\begin{pmatrix}
0&G^{-1}\\
0&0
\end{pmatrix}.\] Thus, by Lemma \ref{L:member}, $Y_1,Y_2\in\CO_C$
and
\[Z_1:=-\frac{1}{2}[[T,Y_1],Y_1]=\begin{pmatrix}
0&0\\
GEG&0
\end{pmatrix}\in \mathcal{L}  \] and \begin{equation}\label{E4}
[Z_1,Y_2]=\begin{pmatrix}
-EG&0\\
0&GE
\end{pmatrix}\in \mathcal{L},\ \ \ -\frac{1}{2}[[Z_1,Y_2],Y_2]=\begin{pmatrix}
0&E\\
0&0
\end{pmatrix}\in \mathcal{L}.
\end{equation}
On the other hand,
\[Z_2:=-\frac{1}{2}[[T,Y_2],Y_2]=\begin{pmatrix}
0& G^{-1}FG^{-1}\\
0&0
\end{pmatrix}\in\mathcal{L}\] and
\begin{equation}\label{E5}[Z_2,Y_1]=\begin{pmatrix}
G^{-1}F&0\\
0&-F G^{-1}
\end{pmatrix}\in \mathcal{L},\ \ \ -\frac{1}{2}[[Z_2,Y_1],Y_1]=\begin{pmatrix}
0&0\\
F&0
\end{pmatrix}\in \mathcal{L}.
\end{equation}
So\begin{equation}\label{E6}
\begin{pmatrix}
A&0\\
0&-A^t
\end{pmatrix}=T-\begin{pmatrix}
0&E\\
0&0
\end{pmatrix}-\begin{pmatrix}
0&0\\
F&0
\end{pmatrix}\in \mathcal{L}.
\end{equation}

{\it Claim 2.} $\mathcal{L}_0$ is a Lie ideal of $\BH$.

Obviously, $\mathcal{L}_0$ is a linear manifold of $\BH$. Let $A\in\mathcal{L}_0$. Then, by Claim 1,
$$ \begin{pmatrix}
A&0\\
0&-A^t
\end{pmatrix}\in \mathcal{L}.$$For any $B\in\BH$, by Lemma \ref{L:member}, we have
$$ \begin{pmatrix}
B&0\\
0&-B^t
\end{pmatrix}\in \CO_C.$$ It follows that $$\begin{pmatrix}
[A,B]&0\\
0&[A^t,B^t]
\end{pmatrix}\in \mathcal{L}$$ and $[A,B]\in \mathcal{L}_0$. Thus this proves Claim 2.

%Also, by (\ref{E4}),(\ref{E5}) and (\ref{E6}), if $J $ contains an operator of form \[\begin{pmatrix}
%A&E\\
%F&-A^t
%\end{pmatrix},\] then $ A, EG, GF\in \mathcal{L}_0$ for any $G\in\BH$ with $G=-G^t$.

Denote by $\widetilde{\mathcal{L}}$ the set on the right side of (\ref{E2}). We shall prove that $\mathcal{L}=\widetilde{\mathcal{L}}$.

``$\mathcal{L}\subset \widetilde{\mathcal{L}}$". Choose an element $T\in \mathcal{L}$ of the form
\[T=\begin{pmatrix}
A&E\\
F&-A^t
\end{pmatrix},\] where $E=-E^t$ and $F=-F^t$. Also we fix an invertible $G\in\CO_D$. Then, from (\ref{E4}),(\ref{E5}) and (\ref{E6}), we obtain $A, EG,GF\in \cL_0$. So
it follows from Claim 1 that \begin{equation}\label{E1}
\begin{pmatrix}
A&0\\
0&-A^t
\end{pmatrix}\in \widetilde{\cL}.
\end{equation}  Also we note that
\[E=\frac{E+E}{2}=\frac{EGG^{-1}+G^{-1}GE}{2}=\frac{(EG)G^{-1}+G^{-1}(EG)^t}{2}.\]
Since $EG\in\cL_0$ and $G^{-1}\in\CO_D$, we obtain $E\in\Delta_1$ and
\begin{equation}\label{E7}\begin{pmatrix}
0&E\\
0&0
\end{pmatrix}\in\widetilde{\cL}.
\end{equation}
Likewise, noting that \[F=\frac{F +F}{2}=\frac{FG G^{-1}+G^{-1}GF}{2}=\frac{(GF)^tG^{-1}+G^{-1}(GF)}{2} \] and
$GF\in \cL_0$, it follows that  $F\in\Delta_2$ and\begin{equation}\label{E8} \begin{pmatrix}
0&0\\
F&0
\end{pmatrix}\in\widetilde{\cL}.
\end{equation}  Since $\widetilde{\cL}$ is obviously a linear manifold of $\B(\cH\oplus\cH)$, it follows from (\ref{E1}),(\ref{E7}) and (\ref{E8}) that
 $$T= \begin{pmatrix}
A&0\\
0&-A^t
\end{pmatrix}+\begin{pmatrix}
0&E\\
0&0
\end{pmatrix}+\begin{pmatrix}
0&0\\
F&0
\end{pmatrix}\in \widetilde{\cL}.$$

``$\widetilde{\cL}\subset \cL$". Assume that $$ T=  \begin{pmatrix}
A&EE_1+E_1E^t\\
F^t F_1+F_1F&-A^t
\end{pmatrix}, $$ where $A,E,F\in \cL_0$ and $E_1,F_1\in\CO_D$. By Claim 1, $\cL$ contains
$$   \begin{pmatrix}
A&0\\
0&-A^t
\end{pmatrix},\ \ R_1:=  \begin{pmatrix}
E&0 \\
0&-E^t
\end{pmatrix},\ \   R_2:=  \begin{pmatrix}
F&0\\
0&-F^t
\end{pmatrix}.$$
Note that $\CO_C$ contains
\[S_1:=\begin{pmatrix}
0&E_1\\
0&0
\end{pmatrix},\ \ S_2:=\begin{pmatrix}
0&0\\
F_1&0
\end{pmatrix}.\] Since $\cL$ is a Lie ideal of $\CO_C$, it follows that $\cL$ contains
\[[R_1,S_1]=\begin{pmatrix}
0&EE_1+E_1E^t\\
0&0
\end{pmatrix},\ \  [S_2,R_2]= \begin{pmatrix}
0&0\\
F^t F_1+F_1F&0
\end{pmatrix}.\]
This  implies
$$T= \begin{pmatrix}
A&0\\
0&-A^t
\end{pmatrix}+[R_1,S_1]+[S_2,R_2]\in \cL.$$

(ii) If $X\in\Delta_1$ and $Y\in\CO_D$, then, by (i) or by direct verification,
\[ W_1:=\begin{pmatrix}
0&X\\
0&0
\end{pmatrix}\in\cL, \ \ W_2:=\begin{pmatrix}
0&0\\
Y&0
\end{pmatrix}\in\CO_C.\]
Since $\cL$ is a Lie ideal of $\CO_C$, it follows that
\[ [W_1,W_2]=\begin{pmatrix}
XY&0\\
0&-YX
\end{pmatrix}\in\cL.\] In view of (\ref{E0}), we obtain $XY\in\cL_0$.
\end{proof}

Given two subsets $\mathcal{E}_1,\mathcal{E}_2$ of $\BH$, we let $[\mathcal{E}_1,\mathcal{E}_2]$ denote the linear span of the set
\[\{[A,B]: A\in \mathcal{E}_1, B\in\mathcal{E}_2\}.\]
A subset $\mathcal{E}$ of $\BH$ is said to be {\it invariant under similarity} if $STS^{-1}\in \mathcal{E}$ for all $T\in \mathcal{E}$
and all invertible $S\in\BH$.

\begin{lemma}[{\cite[Thm. 1 \& 2]{Fong}}]\label{L:LieIdealOfBH}
If $\cL$ is a linear manifold of $\BH$, then the following are equivalent:
\begin{enumerate}
\item[(i)]$\cL$ is a Lie ideal of $\BH$;
\item[(ii)]$\cL$ is invariant under unitary equivalence;
\item[(iii)]$\cL$ is invariant under similarity;
\item[(iv)] there exists an associative ideal $\cI$ of $\BH$ such that $[\cI, \BH]\subset \cL\subset\cI+\bC I$, where $I$ is the identity operator on $\cH$.
\end{enumerate}
\end{lemma}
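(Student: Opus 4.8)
The plan is to prove the equivalence by running the cycle (iv) $\Rightarrow$ (iii) $\Rightarrow$ (ii) $\Rightarrow$ (i) $\Rightarrow$ (iv), so that essentially all the weight falls on the last implication. The first two links are short. For (iv) $\Rightarrow$ (iii), take $X\in\cL$, write $X=K+\lambda I$ with $K\in\cI$ and $\lambda\in\bC$, and let $S\in\BH$ be invertible. Since $\cI$ is a two-sided associative ideal, $KS^{-1}\in\cI$, and the identity $SXS^{-1}=X+[S,KS^{-1}]$ together with $[S,KS^{-1}]\in[\BH,\cI]\subseteq\cL$ gives $SXS^{-1}\in\cL$. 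The implication (iii) $\Rightarrow$ (ii) is trivial, since every unitary is invertible.

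For (ii) $\Rightarrow$ (i) the main point is that $\cL$ is \emph{not} assumed closed in any topology, so I would recover $[A,X]$ from unitary invariance using only finitely many conjugations rather than a derivative or a power series. Fix $X\in\cL$ and an orthogonal projection $P=P^*=P^2$. Conjugating $X$ by the self-adjoint unitary $2P-I$ and by the unitary $P+i(I-P)$ produces two members of $\cL$ whose suitable linear combinations are exactly the corners $PX(I-P)$ and $(I-P)XP$; subtracting these gives $[P,X]\in\cL$. Hence $[P,X]\in\cL$ for every projection $P$, and since $\BH$ is the linear span of its projections (a standard fact for $\cH$ infinite-dimensional), linearity in the first variable yields $[A,X]\in\cL$ for all $A\in\BH$, i.e. $\cL$ is a Lie ideal.

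The core is (i) $\Rightarrow$ (iv). If $\cL\subseteq\bC I$, then $\cL\in\{0,\bC I\}$ and $\cI=\{0\}$ works, so assume $\cL\not\subseteq\bC I$. Let $\cI$ be the associative ideal of $\BH$ generated by $[\BH,\cL]$. I would first reduce the entire statement to the single equality $\cL+\bC I=\cI+\bC I$: granting it, $\cL\subseteq\cI+\bC I$ is the right-hand inclusion of (iv), while $[\cI,\BH]=[\cI+\bC I,\BH]=[\cL+\bC I,\BH]=[\BH,\cL]\subseteq\cL$ supplies the left-hand one. To gain traction on this equality I would extract matrix entries. For $X\in\cL$ and a projection $P$, the identities $[P,X]=PX(I-P)-(I-P)XP$ and $[P,[P,X]]=PX(I-P)+(I-P)XP$ show that both corners lie in $[\BH,\cL]\subseteq\cI$. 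Taking $P$ rank-one and multiplying on either side by rank-one operators $e\otimes f$ (meaning $x\mapsto\la x,f\ra e$) isolates individual matrix entries; a single nonzero off-diagonal entry of some element of $\cL$ then forces every rank-one operator into $\cI$, so $\mathcal{F(H)}\subseteq\cI$. This reduces the problem to the behaviour of the diagonal parts of elements of $\cL$ relative to a fixed orthonormal basis.

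I expect the diagonal to be the main obstacle. Concretely, one must show that the diagonal of each $X\in\cL$ differs from a scalar multiple of $I$ by an element of $\cI$, and dually that the products appearing in $\cI$ (a genuinely associative object, where the Lie-ideal axiom gives no direct control over Jordan products) do not escape $\cL+\bC I$. Because $\cL$ is not closed, no limiting argument is available to pass from the finite corner and entry data to the full off-diagonal and diagonal parts; instead I anticipate invoking the Calkin classification of the associative ideals of $\BH$ — each such ideal being determined by a rearrangement-invariant condition on singular-value sequences — to identify $\cI$ precisely and to verify $\cL+\bC I=\cI+\bC I$. It is this structural identification, rather than the algebraic manipulations preceding it, where the real difficulty of the lemma lies.
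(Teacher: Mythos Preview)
The paper does not prove this lemma at all; it is quoted from Fong--Miers--Sourour \cite{Fong} and used as a black box throughout Section~\ref{S:LieIdeal}. There is therefore no ``paper's proof'' to compare your proposal against.

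Evaluating your proposal on its own merits: the chain (iv) $\Rightarrow$ (iii) $\Rightarrow$ (ii) $\Rightarrow$ (i) is correct and efficient. The identity $SXS^{-1}=X+[S,KS^{-1}]$ for (iv) $\Rightarrow$ (iii) is exactly right, and for (ii) $\Rightarrow$ (i) your extraction of $[P,X]$ from two unitary conjugates, followed by the fact that $\BH$ is the linear span of its projections (Pearcy--Topping, valid since $\dim\cH=\infty$), is a legitimate purely algebraic route that avoids any topology on $\cL$.

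For (i) $\Rightarrow$ (iv), your reduction to the single equality $\cL+\bC I=\cI+\bC I$ is valid, and the corner/entry argument does give $\mathcal{F(H)}\subseteq\cI$. But the final paragraph is a plan rather than a proof. You correctly isolate the two genuine obstacles --- that the \emph{associative} products generating $\cI$ must not escape $\cL+\bC I$, and that the diagonal part of each $X\in\cL$ must differ from a scalar by an element of $\cI$ --- and then defer both to ``the Calkin classification of the associative ideals of $\BH$''. That invocation is not a proof step: Calkin's description of ideals via characteristic sets of singular-value sequences tells you what $\cI$ \emph{can} look like, but it does not by itself produce the inclusion $\cI\subseteq\cL+\bC I$, because you have no a priori singular-value description of $\cL$. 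The actual work in \cite{Fong} at this point is an algebraic argument showing directly that $A\in\cL$ and $B\in\BH$ force $AB\in\cL+\bC I$ (hence $\cI\subseteq\cL+\bC I$), using carefully chosen commutator identities rather than an ideal classification. As written, your proposal leaves exactly the nontrivial content of the implication unproved.
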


Let $C$ be a conjugation on $\cH$. An operator $X\in\BH$ is said to be {\it $C$-symmetric} if $CXC=X^*$.
We denote by $\SC$ the set of all  $C$-symmetric operators on $\cH$. By \cite{Gar06}, each normal operator is $C$-symmetric for some conjugation $C$ on $\cH$.

\begin{lemma}\label{L:SimOrbCSO}
Let $C$ be a conjugation on $\cH$ and $\cI$ be an associative ideal of $\BH$. Then each self-adjoint operator in $\cI$ is unitarily equivalent to an operator in $\SC\cap\cI$.
\end{lemma}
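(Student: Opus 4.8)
The plan is to prove this via the spectral theorem combined with a careful choice of conjugation-compatible orthonormal bases. Let $A \in \cI$ be self-adjoint. Since $\cI$ is an associative ideal of $\BH$ containing the nonzero operator $A$ (the case $A = 0$ being trivial), $\cI$ contains $\KH$ if $\cI \ne \{0\}$; more to the point, $A$ being self-adjoint and in $\cI$, any operator unitarily equivalent to $A$ also lies in $\cI$, since associative ideals are invariant under unitary equivalence (indeed under two-sided multiplication). So it suffices to produce a unitary $U$ such that $UAU^*$ is $C$-symmetric, i.e.\ $C(UAU^*)C = (UAU^*)^*= UAU^*$.

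First I would recall, as the excerpt already notes (citing \cite[Lem. 2.11]{Gar14}), that there is an orthonormal basis $\{e_n\}$ of $\cH$ fixed pointwise by $C$; relative to such a basis, $C$ is entrywise conjugation, and an operator $X$ is $C$-symmetric precisely when its matrix $(x_{i,j})$ satisfies $x_{i,j} = x_{j,i}$, i.e.\ the matrix is (complex) symmetric. So the goal reduces to: every self-adjoint $A$ is unitarily equivalent to an operator whose matrix relative to $\{e_n\}$ is symmetric. Now apply the spectral theorem to the self-adjoint operator $A$: there is a finite or countable orthogonal decomposition and a unitary identification of $\cH$ with an $L^2$-space on which $A$ becomes multiplication by a bounded real-valued function $\varphi$. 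The key classical fact I would invoke is that a multiplication operator $M_\varphi$ by a \emph{real-valued} function is complex symmetric: on $L^2(\mu)$ the conjugation $J f = \overline{f}$ satisfies $J M_\varphi J = M_{\overline\varphi} = M_\varphi = M_\varphi^*$. Equivalently, and more in the spirit of this paper, $A$ normal implies $A$ is $C'$-symmetric for \emph{some} conjugation $C'$ by \cite{Gar06} (already cited right before the lemma); then transport $C'$ to the standard conjugation $C$ via a unitary, using that any two conjugations on a separable Hilbert space are unitarily equivalent.

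Concretely, the cleanest route: by \cite{Gar06} there is a conjugation $C'$ with $C'AC' = A^* = A$. Since all conjugations on $\cH$ are unitarily equivalent (both $C$ and $C'$ admit orthonormal bases they fix pointwise, and mapping one such basis to the other gives a unitary $V$ with $VC'V^* = C$), set $U = V$ and $B = VAV^*$. Then $B$ is self-adjoint, lies in $\cI$ (as $\cI$ is an associative ideal, hence unitarily invariant), and $CBC = (VC'V^*)(VAV^*)(VC'V^*) = V(C'AC')V^* = VAV^* = B = B^*$, so $B \in \SC \cap \cI$, as desired.

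The main obstacle is purely one of assembling the right ingredients rather than any deep difficulty: one must make sure that the two facts being quoted --- ``every normal (here self-adjoint) operator is complex symmetric for some conjugation'' and ``any two conjugations are unitarily equivalent'' --- are genuinely available, and that the ideal $\cI$ really is closed under the conjugation $V(\cdot)V^*$. The latter is immediate since $\cI$ is two-sided and $V, V^*$ are bounded; the former is \cite{Gar06} together with the basis fact \cite[Lem. 2.11]{Gar14} already used in the introduction. So I expect the proof to be short, with the only care needed being to phrase the transport-of-conjugation step precisely. If one prefers to avoid invoking \cite{Gar06}, the alternative is to run the spectral-multiplier argument by hand: diagonalize $A$ via the spectral theorem, observe the resulting multiplication operator is symmetric with respect to complex conjugation of $L^2$-functions, and identify that conjugation with $C$ through a unitary --- the same argument, just unpacked.
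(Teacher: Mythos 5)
Your proof is correct and follows essentially the same route as the paper: the paper simply cites a result from Zhu's 2017 paper (its reference \texttt{2017oam}, p.~942) for the fact that a complex symmetric operator is unitarily equivalent to a $C$-symmetric one for the prescribed conjugation $C$, which is precisely the transport-of-conjugation step you carry out explicitly, and then concludes via unitary invariance of the ideal exactly as you do. (One unused aside in your first paragraph is false --- a nonzero associative ideal of $\BH$ contains the finite-rank operators but need not contain all of $\KH$, e.g.\ the trace class --- but it plays no role in your argument.)
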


\begin{proof}
Let $A$ be a self-adjoint operator in $\cI$. Then $A$ is complex symmetric and, by \cite[page 942]{2017oam}, there exists a unitary operator $U$ such that $UAU^*\in\SC$. By Lemma \ref{L:LieIdealOfBH}, $UAU^*\in\cI$. Thus $A$ is unitarily equivalent to an operator in $\SC\cap \mathcal{I}$.
\end{proof}

\begin{lemma}\label{L:invariant}
Let $C$ be a conjugation on $\cH$ and $\cI$ be an associative ideal of $\BH$. Then
$C \cI C=\cI $.
\end{lemma}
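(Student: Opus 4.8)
The plan is to reduce the claim to the standard structure theory of two-sided ideals of $\BH$, exploiting that although $C$ is only conjugate-linear, it preserves orthonormal systems and the moduli of inner products, so that conjugating a Schmidt decomposition by $C$ produces another Schmidt decomposition with the same singular values.

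First I would observe that it suffices to prove $C\cI C\subseteq\cI$: since $C^2=I$ and the map $\mathcal{S}\mapsto C\mathcal{S}C$ on subsets of $\BH$ is inclusion-preserving, applying it to $C\cI C\subseteq\cI$ returns $\cI=C(C\cI C)C\subseteq C\cI C$, hence equality. The cases $\cI=\{0\}$ and $\cI=\BH$ being trivial, I may assume $\cI$ is a nonzero proper two-sided ideal, so that every operator in $\cI$ is compact (a standard fact on a separable Hilbert space; see \cite{Ringrose}).

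Next, for a fixed $T\in\cI$ I would take a Schmidt decomposition $T=\sum_n s_n\langle\cdot,f_n\rangle g_n$, where $(s_n)$ are the nonzero singular values of $T$ and $\{f_n\},\{g_n\}$ are orthonormal systems in $\cH$, and compute, using $\langle Cx,Cy\rangle=\langle y,x\rangle$, that
$$CTCx=\sum_n s_n\langle x,Cf_n\rangle Cg_n\qquad(x\in\cH).$$
Since $\{Cf_n\}$ and $\{Cg_n\}$ are again orthonormal systems, this is a Schmidt decomposition of $CTC$ with the \emph{same} singular values $(s_n)$ as $T$. Letting $A$ be the partial isometry with $Ag_n=Cg_n$ and $B$ the partial isometry with $B(Cf_n)=f_n$ (both vanishing on the orthogonal complements of their defining systems), one reads off $CTC=ATB$; as $\cI$ is a two-sided ideal of $\BH$ and $T\in\cI$, this forces $CTC\in\cI$. (Alternatively, the conclusion $CTC\in\cI$ follows from Calkin's correspondence, by which membership of a compact operator in $\cI$ depends only on its singular value sequence; see \cite{Ringrose,Schatten}.) This proves $C\cI C\subseteq\cI$, and hence $C\cI C=\cI$.

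The one point requiring care is the very first move: because $C$ is conjugate-linear, $CTC$ is \emph{not} of the form $ATB$ with $A,B\in\BH$ fixed in advance in any obvious way, so one genuinely has to pass through the singular values, where the conjugate-linearity is absorbed by the identities $\langle Cf_n,Cf_m\rangle=\delta_{nm}$ and $\overline{\langle Cx,f_n\rangle}=\langle x,Cf_n\rangle$. Everything else — the reduction to compact operators, the display above, and the description of $A$ and $B$ — is routine.
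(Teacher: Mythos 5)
Your proof is correct, but it takes a genuinely different route from the paper's. The paper first reduces to self-adjoint $A\in\cI$ (using that associative ideals of $\BH$ are self-adjoint), observes that the self-adjoint operator $CAC$ is complex symmetric, picks a conjugation $D$ with $D(CAC)D=CAC$, and then notes that $U=CD$ is unitary with $U^*AU=CAC$, so that $CAC\in\cI$ by the unitary invariance of ideals (Lemma \ref{L:LieIdealOfBH}). You instead work with an arbitrary $T\in\cI$ directly: after disposing of the trivial ideals and invoking Calkin's theorem that every nonzero proper two-sided ideal of $\BH$ (on separable $\cH$) consists of compact operators, you conjugate a Schmidt expansion by $C$ and exhibit $CTC=ATB$ for explicit partial isometries $A,B$, so that $CTC\in\cI$ by the two-sided ideal property alone. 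Your computation is right, including the key identity $\overline{\la Cx,f_n\ra}=\la x,Cf_n\ra$ and the fact that $\{Cf_n\}$, $\{Cg_n\}$ remain orthonormal; the interchange of $C$ with the infinite sum is justified since $C$ is isometric. What each approach buys: yours is more self-contained (no appeal to complex symmetric operators or to the Fong--Miers--Sourour characterization, and no reduction to self-adjoint elements), at the cost of needing the compactness of proper ideals and a separate (trivial) treatment of $\cI=\{0\}$ and $\cI=\BH$; the paper's argument is uniform over all associative ideals with no case distinction and fits the complex-symmetric-operator toolkit used throughout the rest of the paper.
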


\begin{proof}
It suffices to prove $C \cI C\subset \cI $, since it immediately follows
$$\cI=(CC)\cI(CC)=C(C\cI C)C \subset C\cI C$$ and $C \cI C= \cI $.

``$C \cI C\subset \cI$". By \cite[Lemma I.5.1]{David}, $\cI$ is self-adjoint, that is, $X\in\cI$ implies $X^*\in \cI$. So it suffices to prove that $CAC\in\cI$ for any self-adjoint $A\in\cI$.

Now we fix a self-adjoint operator $A\in\cI$.
Clearly, $CAC$ is also self-adjoint. Then $CAC$ is complex symmetric and there exists a conjugation $D$ on $\cH$ such that $D(CAC)D=CAC$.

Set $U=CD$. Then $U\in\BH$ is unitary and $U^*AU=CAC$. Since $\cI$ is a Lie ideal of $\BH$, by Lemma \ref{L:LieIdealOfBH}, $\cI$ is invariant under unitary equivalence. So $CAC=U^*AU\in \cI$.
\end{proof}

Given a cardinality $n\geq 1$, we let $\cH^{(n)}$ denote the direct sum of $n$ copies of $\cH$.

\begin{lemma}\label{L:Combination}
Let $D$ be a conjugation on $\cH$ and $C=\begin{pmatrix}
0&D\\
D&0
\end{pmatrix}$. Let $\mathcal{I}$ be an associative ideal of $\B(\cH^{(2)})$ and
 $\cL$ be a linear manifold of $\B(\cH^{(2)})$ satisfying
$[\mathcal{I},\B(\cH^{(2)})]\subset\cL\subset\cI$.
Then
\begin{enumerate}
\item[(i)] $\vee\{XY+YCX^*C: X\in \cL, Y\in\CO_C \}=\CO_C\cap \cI$, and
\item[(ii)] if $ XY\in\cL$ for every $X\in\CO_C\cap \cI$ and $Y\in\CO_C$, then $\cL=\cI$.
\end{enumerate}
\end{lemma}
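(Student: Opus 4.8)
The plan is to analyze, in $\B(\cH^{(2)})$ with $C=\begin{pmatrix}0&D\\ D&0\end{pmatrix}$, the subalgebra of "diagonal" and "off-diagonal" pieces. Write each operator on $\cH^{(2)}$ as a $2\times 2$ operator matrix over $\BH$, and recall (Lemma \ref{L:member}) that $\CO_C$ consists of those $\begin{pmatrix}A&E\\ F&-DA^*D\end{pmatrix}$ with $E,F\in\CO_D$. For part (i), I would first show the inclusion $\subseteq$: for $X\in\cL\subset\cI$ and $Y\in\CO_C$, the element $XY+YCX^*C$ is easily checked to lie in $\CO_C$ (apply $C(\cdot)C$ and take adjoints, using $CYC=-Y^*$), and it lies in $\cI$ because $\cI$ is an associative ideal. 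For the reverse inclusion, take an arbitrary $Z=\begin{pmatrix}A&E\\ F&-DA^*D\end{pmatrix}\in\CO_C\cap\cI$; the goal is to realize $Z$ as a linear combination of terms $XY+YCX^*C$. The key device is to use the off-diagonal "shift" elements of $\CO_C$ built from an invertible $G\in\CO_D$, exactly as in the proof of Proposition \ref{P:CHARA} (the operators $Y_1=\begin{pmatrix}0&0\\ G&0\end{pmatrix}$, $Y_2=\begin{pmatrix}0&G^{-1}\\ 0&0\end{pmatrix}\in\CO_C$), together with diagonal elements $X=\begin{pmatrix}B&0\\ 0&-DB^*D\end{pmatrix}$ and off-diagonal elements of $\cL$ obtained from $[\cI,\B(\cH^{(2)})]\subset\cL$. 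One writes $A=\tfrac12\big((AG)G^{-1}+G^{-1}D(AG)^*D\big)$ to recover the $(1,1)$-entry via a single symmetrized product with $X$ diagonal and $Y=\begin{pmatrix}0&G^{-1}\\ G&0\end{pmatrix}$-type elements, and similarly recovers $E\in\CO_D$ in the $(1,2)$-slot and $F\in\CO_D$ in the $(2,1)$-slot using the corresponding $Y_1,Y_2$. The point is that since $Z\in\cI$, each of $A,E,F$ lies in $\cI$ (by looking at corners of $\cI$), hence the requisite $X$'s lie in $\cL$ because $[\cI,\B(\cH^{(2)})]\subset\cL$ (a corner of $\cI$ sits inside $\cL$ up to commutators, as in Fong's Lemma \ref{L:LieIdealOfBH}).

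For part (ii), assume $XY\in\cL$ whenever $X\in\CO_C\cap\cI$ and $Y\in\CO_C$, and I want $\cL=\cI$. Since already $\cL\subset\cI$, the work is to show $\cI\subset\cL$. Here I would use part (i): every element of $\CO_C\cap\cI$ is a sum of terms $X'Y'$ with $X'\in\cL$, $Y'\in\CO_C$ — no wait, I should instead argue directly that an arbitrary $R\in\cI$ decomposes. Using the invertible $G\in\CO_D$ again, write the off-diagonal building blocks $\begin{pmatrix}0&B\\ 0&0\end{pmatrix}=\begin{pmatrix}0&0\\ DB^*D&0\end{pmatrix}^{\dagger}$-type, and observe that for suitable $X\in\CO_C\cap\cI$ (e.g. $X=\begin{pmatrix}0&BG\\ 0&0\end{pmatrix}+\begin{pmatrix}0&0\\ G^{-1}DB^*D&0\end{pmatrix}\in\CO_C$ when $B\in\cI$) and $Y\in\CO_C$, the product $XY$ recovers an arbitrary $\begin{pmatrix}B&0\\ 0&0\end{pmatrix}$, $\begin{pmatrix}0&B\\ 0&0\end{pmatrix}$, or $\begin{pmatrix}0&0\\ B&0\end{pmatrix}$ with $B\in\cI$ in its various slots, modulo lower-order terms we have already placed in $\cL$. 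Since these span all of $\cI$ (as $\cI$ is closed under the corner operations and $\B(\cH^{(2)})\cI\B(\cH^{(2)})\subset\cI$), we conclude $\cI\subseteq\cL$.

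The main obstacle I anticipate is bookkeeping in part (i): realizing the \emph{diagonal} entry $A$ and making sure that $A\in\cI$ forces the corresponding diagonal $X=\begin{pmatrix}B&0\\ 0&-DB^*D\end{pmatrix}$ (or its off-diagonal avatars) to lie in $\cL$ rather than merely in $\cI$. This is where one must invoke $[\cI,\B(\cH^{(2)})]\subset\cL$ carefully — a corner-supported operator in $\cI$ is a commutator $[\begin{pmatrix}0&B\\ 0&0\end{pmatrix},\begin{pmatrix}0&0\\ I&0\end{pmatrix}]$ up to a sign and a diagonal correction, so it lands in $\cL$; I will have to track these corrections and verify they too lie in $\cL$, possibly iterating the argument. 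A secondary subtlety is that $E,F$ must be kept inside $\CO_D$ throughout, so every symmetrization $B\mapsto \tfrac12(BG^{-1}+G^{-1}DB^*D)$-type identity must be checked to land in $\CO_D$; this is the same computation already used in Proposition \ref{P:CHARA} and should go through verbatim. Once these two points are handled, both inclusions in (i) and (ii) follow by assembling the pieces linearly.
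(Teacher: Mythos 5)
Your part (i) follows essentially the paper's route: reduce to $\cI=M_2(\cI_0)$, put the off-diagonal corners and the trace-free diagonals of $M_2(\cI_0)$ into $\cL$ via explicit commutators, and then exhibit the off-diagonal and diagonal blocks of a given $T\in\CO_C\cap\cI$ as expressions $XY+YCX^*C$ with $X\in\cL$, $Y\in\CO_C$. One computational slip there: the identity $A=\tfrac12\bigl((AG)G^{-1}+G^{-1}D(AG)^*D\bigr)$ is false in general, since $D(AG)^*D=-G\,DA^*D$ makes the right-hand side equal to $\tfrac12(A-DA^*D)$; moreover a diagonal $X$ times an off-diagonal $Y$ cannot produce a diagonal block. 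The correct choice is $X=\begin{pmatrix}0&AG\\0&0\end{pmatrix}\in\cL$ and $Y=\begin{pmatrix}0&0\\G^{-1}&0\end{pmatrix}\in\CO_C$, for which $XY+YCX^*C=\begin{pmatrix}A&0\\0&-DA^*D\end{pmatrix}$ exactly; this is repairable with the tools you already list.

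Part (ii) has a genuine gap. The element you propose, $X=\begin{pmatrix}0&BG\\0&0\end{pmatrix}+\begin{pmatrix}0&0\\G^{-1}DB^*D&0\end{pmatrix}$, is not in $\CO_C$ for general $B\in\cI_0$: by Lemma \ref{L:member} the off-diagonal entries must lie in $\CO_D$, and $BG\in\CO_D$ is a nontrivial constraint on $B$. More importantly, the real obstruction in (ii) is the single diagonal corner $\begin{pmatrix}B&0\\0&0\end{pmatrix}$: commutators only give $\begin{pmatrix}B&0\\0&-B\end{pmatrix}$, and the hypothesis that $XY\in\cL$ directly yields $\begin{pmatrix}B&0\\0&B\end{pmatrix}$ only when $B$ is $D$-symmetric (take $X=B\oplus(-DB^*D)=B\oplus(-B)\in\CO_C\cap\cI$ and $Y=I\oplus(-I)$). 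To pass from $\cI_0\cap\mathcal{S}_D$ to all of $\cI_0$, the paper invokes two further facts that your plan omits: $\cL$ is similarity-invariant (Fong's theorem, Lemma \ref{L:LieIdealOfBH}), and every self-adjoint operator in $\cI_0$ is unitarily equivalent to a $D$-symmetric operator in $\cI_0$ (Lemma \ref{L:SimOrbCSO}, which rests on the complex symmetry of self-adjoint operators); one then concludes by splitting a general element of $\cI_0$ into real and imaginary parts. Without some substitute for this step, your assertion that the products ``span all of $\cI$'' does not follow.
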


\begin{proof}
Denote $\Delta=\vee\{XY+YCX^*C: X\in \cL, Y\in\CO_C \}$.

(i) Since $\cI$ is an  associative ideal of $\B(\cH^{(2)})$ , there exists an  associative ideal $\cI_0$ of $\BH$ such that
$\cI=M_2(\cI_0).$

{\it Claim 1.} $\begin{pmatrix}
A&E\\
F&-A
\end{pmatrix}\in \cL$ for any $A,E,F\in\cI_0$.

Let $A,E,F\in\cI_0$. Since $$\begin{pmatrix}
0&-E\\
F&0
\end{pmatrix},\begin{pmatrix}
0&A\\
0&0
\end{pmatrix}\in M_2(\cI_0)=\cI,$$ it follows that
$$\frac{1}{2}\left[\begin{pmatrix}
0&-E\\
F&0
\end{pmatrix},\begin{pmatrix}
I&0\\
0&-I
\end{pmatrix}\right]=\begin{pmatrix}
0&E\\
F&0
\end{pmatrix}\in\cL $$ and
$$\left[\begin{pmatrix}
0&A\\
0&0
\end{pmatrix},\begin{pmatrix}
0&0\\
I&0
\end{pmatrix}\right]=\begin{pmatrix}
A&0\\
0&-A
\end{pmatrix}\in\cL.$$  This proves Claim 1.

{\it Claim 2.} $\Delta=\cI\cap\CO_C$.

 ``$\subset$". Fix $X\in \cL$ and $Y\in\CO_C$. It is easy to check that $XY+YCX^*C\in\CO_C$.
 On the other hand,  note that $X\in\cI$ and $\cI$ is self-adjoint. Then by Lemma \ref{L:invariant} $CX^*C\in\cI$.
So $XY+YCX^*C\in\CO_C\cap\cI$.

``$\supset$".
Arbitrarily choose $T\in \cI\cap\CO_C$. Then $T$ has the form of
\[T=\begin{pmatrix}
A&E\\
F&-DA^*D
\end{pmatrix},\]  where $E,F\in\cI_0\cap\CO_D$ and $A\in\cI_0$.

Since $E,F\in\cI_0$, by Claim 1, we have
$$\begin{pmatrix}
0&-E\\
F&0
\end{pmatrix}\in\cL.$$ On the other hand, it is easy to see that $$ \begin{pmatrix}
I&0\\
0&-I
\end{pmatrix}\in\CO_C.$$
Thus
\begin{align}\label{E12}
2\begin{pmatrix}
0&E\\
F&0
\end{pmatrix}&=\begin{pmatrix}
0&-E\\
F&0
\end{pmatrix}\begin{pmatrix}
I&0\\
0&-I
\end{pmatrix}+\begin{pmatrix}
I&0\\
0&-I
\end{pmatrix}\begin{pmatrix}
0&E\\
-F&0
\end{pmatrix}\\
&=\begin{pmatrix}
0&-E\\
F&0
\end{pmatrix}\begin{pmatrix}
I&0\\
0&-I
\end{pmatrix}+\begin{pmatrix}
I&0\\
0&-I
\end{pmatrix}C\begin{pmatrix}
0&-E\\
F&0
\end{pmatrix}^*C \in\Delta .
\end{align}

Now choose an invertible $G\in\CO_D$. It is easy to see that $$ \begin{pmatrix}
0&0\\
G^{-1}&0
\end{pmatrix}\in\CO_C.$$ For any $A\in\cI_0$, we have $AG\in\cI_0$ and, by Claim 1,
\[\begin{pmatrix}
0&AG\\
0&0
\end{pmatrix}\in \cL.\]
Thus
\begin{align*}\begin{pmatrix}
A&0\\
0&-DA^*D
\end{pmatrix}&=
\begin{pmatrix}
0&AG\\
0&0
\end{pmatrix}\begin{pmatrix}
0&0\\
G^{-1}&0
\end{pmatrix}-\begin{pmatrix}
0&0\\
G^{-1}&0
\end{pmatrix}\begin{pmatrix}
0&GDA^*D\\
0&0
\end{pmatrix}\\
&=\begin{pmatrix}
0&AG\\
0&0
\end{pmatrix}\begin{pmatrix}
0&0\\
G^{-1}&0
\end{pmatrix}+\begin{pmatrix}
0&0\\
G^{-1}&0
\end{pmatrix}C\begin{pmatrix}
0&AG\\
0&0
\end{pmatrix}^*C\in\Delta.
\end{align*}
Combing this with (\ref{E12}) produces that $T\in\Delta$. This proves Claim 2.

(ii) Now we assume that $ XY\in\cL$ for every $X\in\CO_C\cap \cI$ and $Y\in\CO_C$. We shall show that $\cL=\cI$.

{\it Claim 3.}
$\begin{pmatrix}
A&0\\
0&0
\end{pmatrix},\begin{pmatrix}
0&0\\
0&A
\end{pmatrix}\in \cL$ for all $A\in\cI_0\cap\mathcal{S}_D$.

Let $A\in\cI_0\cap\mathcal{S}_D$. Then $$\begin{pmatrix}
A & 0\\
0&-A
\end{pmatrix}=\begin{pmatrix}
A & 0\\
0&-DA^*D
\end{pmatrix}\in\CO_C\cap\cI.$$
Note that $$ \ \ \begin{pmatrix}
I & 0\\
0&-I
\end{pmatrix}\in\CO_C.$$ Then, by the hypothesis, we have
\[\begin{pmatrix}
A& 0\\
0&A
\end{pmatrix}=\begin{pmatrix}
A & 0\\
0&-A
\end{pmatrix} \begin{pmatrix}
I & 0\\
0&-I
\end{pmatrix}\in\cL.\]
By Claim 1, we have
\[\begin{pmatrix}
A & 0\\
0&-A
\end{pmatrix}\in\cL,\] which implies that $$\begin{pmatrix}
A & 0\\
0&0
\end{pmatrix},\begin{pmatrix}
0 & 0\\
0&A
\end{pmatrix}\in\cL.$$ This proves Claim 3.

Now we can conclude the proof.
By Lemma \ref{L:LieIdealOfBH}, $\cL$ is similarity-invariant. It follows from Lemma \ref{L:SimOrbCSO} and Claim 3 that
\[ \begin{pmatrix}
A & 0\\
0& B
\end{pmatrix}\in\cL\] for all self-adjoint operators $A,B\in\cI_0$.
Noting that $\cL$ is a linear manifold of $\B(\cH^{(2)})$,
it follows that
\[ \begin{pmatrix}
A & 0\\
0& B
\end{pmatrix}\in\cL, \ \ \forall A,B\in\cI_0.\]
This combing Claim 1 implies that $\cI=M_2(\cI_0)\subset \cL$. Furthermore, we obtain $\cI= \cL$.
\end{proof}

\begin{corollary}\label{C:Combination}
Let $\mathcal{I}$ be an associative ideal of $\BH$ and $\cL$ be a linear manifold of $\BH$ satisfying
$[\mathcal{I},\BH]\subset\cL\subset\cI$.
Assume that $C$ is a conjugation on $\cH$. Then
\begin{enumerate}
\item[(i)] $\vee\{XY+YCX^*C: X\in \cL, Y\in\CO_C \}=\CO_C\cap \cI$, and
\item[(ii)] if $ XY\in\cL$ for all $X\in\CO_C\cap \cI$ and all $Y\in\CO_C$, then $\cL=\cI$.
\end{enumerate}
\end{corollary}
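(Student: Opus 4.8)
The plan is to reduce the statement to Lemma \ref{L:Combination} by transporting the data along a unitary that converts the given conjugation $C$ into the off-diagonal form used there. Since $\cH$ is separable and infinite-dimensional, I would fix a unitary identification $\cH=\cK\oplus\cK$ with $\cK$ again separable and infinite-dimensional (for instance $\cK=\cH$), pick a conjugation $D$ on $\cK$, and form $C'=\begin{pmatrix}0&D\\D&0\end{pmatrix}$ on $\cK\oplus\cK=\cH$ as in Lemma \ref{L:member}. Any two conjugations on $\cH$ are unitarily equivalent, since each admits a real orthonormal basis of fixed vectors (cf. \cite[Lem. 2.11]{Gar14}) and mapping one such basis onto another defines a unitary; so there is a unitary $U$ on $\cH$ with $UCU^*=C'$. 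Conjugating the defining identity $CXC=-X^*$ by $U$ then gives $U\CO_CU^*=\CO_{C'}$.

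Next I would verify that the hypotheses survive the conjugation $U(\cdot)U^*$. As $\cI$ is a Lie ideal of $\BH$, Lemma \ref{L:LieIdealOfBH} yields $U\cI U^*=\cI$ and hence $U[\cI,\BH]U^*=[\cI,\BH]$. Moreover the inclusions $[\cI,\BH]\subset\cL\subset\cI$ already make $\cL$ a Lie ideal of $\BH$: for $X\in\cL\subset\cI$ and $A\in\BH$ we have $[X,A]\in[\cI,\BH]\subset\cL$. So Lemma \ref{L:LieIdealOfBH} also gives $U\cL U^*=\cL$. Through the identification $\cH=\cK\oplus\cK$, $\cI$ is an associative ideal of $\B(\cK^{(2)})$ and $\cL$ is a linear manifold with $[\cI,\B(\cK^{(2)})]\subset\cL\subset\cI$, which is exactly the hypothesis of Lemma \ref{L:Combination} with $\cK$, $D$, $C'$, $\cI$, $\cL$ in the roles of $\cH$, $D$, $C$, $\cI$, $\cL$.

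For part (i), I would note that for $X\in\cL$ and $Y\in\CO_C$,
\[U\bigl(XY+YCX^*C\bigr)U^*=(UXU^*)(UYU^*)+(UYU^*)\,C'(UXU^*)^*C',\]
with $UXU^*\in\cL$ and $UYU^*\in\CO_{C'}$, and that as $(X,Y)$ ranges over $\cL\times\CO_C$ the pair $(UXU^*,UYU^*)$ ranges over $\cL\times\CO_{C'}$. Hence $U(\cdot)U^*$ carries $\vee\{XY+YCX^*C:X\in\cL,\ Y\in\CO_C\}$ bijectively onto $\vee\{X'Y'+Y'C'X'^*C':X'\in\cL,\ Y'\in\CO_{C'}\}$, which by Lemma \ref{L:Combination}(i) equals $\CO_{C'}\cap\cI=U(\CO_C\cap\cI)U^*$; applying $U^*(\cdot)U$ gives (i). For part (ii), assuming $XY\in\cL$ for all $X\in\CO_C\cap\cI$ and $Y\in\CO_C$, I would take arbitrary $X'\in\CO_{C'}\cap\cI$ and $Y'\in\CO_{C'}$, observe $U^*X'U\in\CO_C\cap\cI$ and $U^*Y'U\in\CO_C$, and conclude $X'Y'=U\bigl((U^*X'U)(U^*Y'U)\bigr)U^*\in U\cL U^*=\cL$; thus the hypothesis of Lemma \ref{L:Combination}(ii) holds and $\cL=\cI$.

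The only genuine point to pin down is the reduction in the first paragraph — the unitary equivalence of conjugations together with the fact that, once $\cH$ is identified with $\cK^{(2)}$, every associative ideal of $\B(\cK^{(2)})$ is of the form $M_2(\cI_0)$; the latter is already invoked inside the proof of Lemma \ref{L:Combination}, and the former is standard. Everything after that is bookkeeping with the unitary $U$.
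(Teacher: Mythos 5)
Your proposal is correct and takes essentially the same route as the paper, which simply says ``up to unitary equivalence we may assume $\cH=\cK^{(2)}$ and $C$ has the off-diagonal form'' and then invokes Lemma \ref{L:Combination}. You have merely filled in the bookkeeping (unitary equivalence of conjugations, invariance of $\cI$ and $\cL$ under $U(\cdot)U^*$ via Lemma \ref{L:LieIdealOfBH}) that the paper leaves implicit.
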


\begin{proof}
Up to unitary equivalence, we may assume that $\cH=\cK^{(2)}$ for some Hilbert space $\cK$ and
\[C=\begin{pmatrix}
0&D\\
D&0
\end{pmatrix} \] for some conjugation $D$ on $\cK$.
In light of Lemma \ref{L:Combination}, one can see the conclusion.
\end{proof}

Now we are ready to prove Theorem \ref{T:Lie}.

\begin{proof}[Proof of Theorem \ref{T:Lie}]
The sufficiency is obvious. We need only prove the necessity.

``$\Longrightarrow$". We directly assume that $\cL$ is a nontrivial ideal of $\CO_C$.
Then, by \cite[page 78]{Harpe}, we have
\begin{align}\label{E90}
  [\CO_C\cap\mathcal{F(H)}]\subset\cL\subset[\CO_C\cap\KH].
\end{align}

Without loss of generality, we assume that $\cH=\cK^{(2)}$ for some Hilbert space $\cK$ and
\[C=\begin{pmatrix}
0&D\\
D&0
\end{pmatrix} \] for some conjugation $D$ on $\cK$. By Proposition \ref{P:CHARA},
there exists a Lie ideal $\cL_0$ of $\B(\cK)$ such that
\[\cL=\left\{ \begin{pmatrix}
A&E\\
F&-DA^*D
\end{pmatrix} : A\in \cL_0, E\in\Delta_1, F\in\Delta_2\right\},\]
where
\[ \Delta_1=\vee\{XY+YDX^*D: X\in \cL_0, Y\in\CO_D\}\] and
\[ \Delta_2=\vee\{DX^*D Y+YX: X\in \cL_0,  Y\in\CO_D\};\] moreover,
 $XY\in \cL_0$ for $X\in\Delta_1$ and $Y\in\CO_D$.

Since $\cL_0$ is a Lie ideal of $\B(\cK)$, it follows from Lemma \ref{L:LieIdealOfBH} that
we can find an associative ideal $\mathcal{I}_0$ of $\mathcal{B(K)}$ such that
$[\mathcal{I}_0,\mathcal{B(K)}]\subset\cL_0\subset  \cI_0+\bC I $, where $I$ is the identity operator on $\cK$.
In view of (\ref{E90}), each operator in $\cL$ is compact. So is $\cL_0$. Thus  $\cI_0 \subset\B_\infty(\cK)$ and $[\mathcal{I}_0,\mathcal{B(K)}]\subset\cL_0\subset  \cI_0$.

According to Corollary \ref{C:Combination}, we have
$\Delta_1=\CO_D\cap\cI_0$ and $\cL_0=\cI_0$. By Lemma \ref{L:invariant}, $\cI_0=D\cI_0D$. It follows that $\Delta_2=\Delta_1=\CO_D\cap\cI_0$.
Therefore
\[\cL=\left\{ \begin{pmatrix}
A&E\\
F&-DA^*D
\end{pmatrix} : A\in \cI_0, E,F\in \CO_D\cap\cI_0\right\}=M_2(\cI_0)\cap\CO_C.\]
Noting that $M_2(\cI_0)$ is an associative ideal of $\BH$, we conclude the proof.
\end{proof}

%
%The following proposition shows that $\CO_C$ admits some universality.

\begin{proposition}\label{P:universal}
Let $C$ be a conjugation on $\cH$. Then any Lie subalgebra of $\BH$ is Lie isomorphic to a Lie subalgebra of $\CO_C$.
\end{proposition}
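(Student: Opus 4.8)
The plan is to build an explicit Lie-algebra embedding into $\CO_C$ after first replacing $(\cH,C)$ by a convenient unitarily equivalent model; this is legitimate since the assertion depends only on the unitary equivalence class of the pair $(\cH,C)$. Arguing exactly as in the proof of Corollary~\ref{C:Combination}, I would pass to the situation $\cH=\cK\oplus\cK$ with $\cK$ a separable infinite-dimensional Hilbert space and $C=\begin{pmatrix}0&D\\ D&0\end{pmatrix}$ for some conjugation $D$ on $\cK$ (such matrices are conjugations by Lemma~\ref{L:member}, and any two conjugations on a fixed separable Hilbert space are unitarily equivalent, a unitary implementing the equivalence inducing a Lie isomorphism of $\BH$ onto $\B(\cK\oplus\cK)$ that carries $\CO_C$ onto its counterpart). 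Moreover $\B(\cK)$ and $\BH$ are $*$-isomorphic, since $\cK$ and $\cH$ are both separable and infinite-dimensional, and a $*$-isomorphism is in particular a Lie isomorphism taking Lie subalgebras to Lie subalgebras. Hence it suffices to prove that every Lie subalgebra of $\B(\cK)$ is Lie isomorphic to a Lie subalgebra of $\CO_C$.

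To do this I would use the map $\phi\colon\B(\cK)\to\B(\cK\oplus\cK)$ given by
\[
\phi(A)=\begin{pmatrix}A&0\\ 0&-DA^{*}D\end{pmatrix}.
\]
By Lemma~\ref{L:member} with $E=F=0$, $\phi(A)\in\CO_C$ for every $A\in\B(\cK)$. Writing $A^{t}:=DA^{*}D$, the map $A\mapsto A^{t}$ is complex-linear, being the composition of the two conjugate-linear maps $A\mapsto A^{*}$ and $X\mapsto DXD$, and the identity $D^{2}=I$ yields the relations $(A^{t})^{t}=A$ and $(AB)^{t}=B^{t}A^{t}$. Consequently $\phi$ is linear, and it is injective because the $(1,1)$-entry of $\phi(A)$ equals $A$. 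The one genuine verification is that $\phi$ respects the bracket: the $(1,1)$-block of $[\phi(A),\phi(B)]$ is $[A,B]$, while the $(2,2)$-block is
\[
(-A^{t})(-B^{t})-(-B^{t})(-A^{t})=A^{t}B^{t}-B^{t}A^{t}=(BA)^{t}-(AB)^{t}=-[A,B]^{t},
\]
which is exactly the $(2,2)$-block of $\phi([A,B])$. Therefore $\phi([A,B])=[\phi(A),\phi(B)]$, so $\phi$ is an injective homomorphism of Lie algebras.

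It then follows that for any Lie subalgebra $\mathcal{M}$ of $\B(\cK)$ the image $\phi(\mathcal{M})$ is a Lie subalgebra of $\CO_C$ and $\phi$ restricts to a Lie isomorphism of $\mathcal{M}$ onto $\phi(\mathcal{M})$; together with the reduction of the first paragraph this yields the proposition (and, taking $\mathcal{M}=\B(\cK)\cong\BH$, it gives the statement from the introduction that $\BH$ itself embeds as a Lie subalgebra of $\CO_C$). I do not foresee a serious obstacle. The two points that need attention are the opening reduction — in particular the observation that $\cK\cong\cH$, which is what allows one to replace ``Lie subalgebra of $\BH$'' by ``Lie subalgebra of $\B(\cK)$'' — and the sign bookkeeping showing that $A\mapsto DA^{*}D$ is a complex-linear algebra anti-homomorphism, hence that $A\mapsto -DA^{*}D$ is a Lie homomorphism; this is precisely what makes the $(2,2)$-block of $\phi$ behave correctly and what puts $\phi(A)$ into $\CO_C$.
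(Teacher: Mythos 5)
Your proposal is correct and follows essentially the same route as the paper: the key embedding $A\mapsto A\oplus(-DA^{*}D)$ is exactly the map $\varphi$ used there, and your opening reduction to the model $\cK\oplus\cK$ with $C=\begin{pmatrix}0&D\\ D&0\end{pmatrix}$ is just the paper's unitary-equivalence step $\CO_C\cong\CO_D$ run in the opposite direction. The only difference is that you spell out the bracket computation and the anti-homomorphism property of $A\mapsto DA^{*}D$, which the paper leaves to the reader.
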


\begin{proof}
Set $$D=\begin{pmatrix}
  0&C\\
  C&0
\end{pmatrix}.$$

For $X\in\BH$, define $\varphi:\BH\rightarrow \CO_D$ as
$$\varphi: X\longmapsto X\oplus(-CX^*C). $$
Then one can verify that $\varphi$ is a Lie homomorphism and
$\|\varphi(X)\|=\|X\|$ for all $X\in\BH$. Then any Lie subalgebra of $\BH$ is Lie isomorphic to a Lie subalgebra of $\CO_D$.
Note that $\CO_C\cong\CO_D$, that is, there exists unitary operator $U:\cH\rightarrow\cH\oplus\cH$ such that $U\CO_C U^*=\CO_D$. Hence the result follows readily.
\end{proof}

The result of Proposition \ref{P:universal} indicates that
the Lie algebra $\CO_C$ contains the information of all associative subalgebras of $\BH$.

%
%We denote by $\mathcal{F(H)}$ the collection of all finite-rank operators on $\cH$.
%
%\begin{corollary}
%Let $C$ be a conjugation on $\cH$. Then
%\begin{enumerate}
%\item[(i)] $\CO_C$ has a Lie subalgebra isomorphic to $\BH$,
%\item[(ii)]  $ \mathcal{F(H)}\cap\CO_C$ is the minimal  nontrivial Lie ideal of $\CO_C$, and
%\item[(iii)]  $ \mathcal{K(H)}\cap\CO_C$ is the maximal nontrivial Lie ideal of $\CO_C$.
%\end{enumerate}
%\end{corollary}

%
%\begin{theorem}\label{T:main}
%If $C$ is a conjugation on $\cH$, then $\CO_C\cap\KH$ is the unique nontrivial closed Lie ideal of $\CO_C$.
%\end{theorem}
%
%\begin{proof}
%Assume that $\cL$ is a nontrivial closed Lie ideal of $\CO_C$.  We shall show that $\CO_C\cap\KH=\cL$.
%
%``$\subset$". For $s,t\geq 1$, set $E_{s,t}=e_s\otimes e_t-e_t\otimes e_s$. By the hypothesis, $\cL$ contains a nonzero operator $A$. So there exists $i,j$ such that $\la Ae_i,e_j\ra\ne 0$.
%Then $[A,E_{i,i+j}]\ne 0$ is compact and $[A,E_{i,i+j}]\in\cL$. So one can deduce that $E_{s,t}\in\cL$ for all $s,t\geq 1$. So $\CO_C\cap\KH\subset\cL$.
%
%``$\supset$". For a proof by contradiction, we assume that $\cL\nsubseteq\cI$. So we can choose a non-compact $X\in\cL$.
%By Corollary \ref{C:nonCompact}, we obtain $\cL=\CO_C$, a contradiction.
%\end{proof}

\subsection{Schatten $p$-classes}
The aim of this subsection is to prove the following result, which classifies dual relations among $\CO_C$ and $\CO_{C,p} (p\in[1,\infty])$.

\begin{proposition}\label{P:dual}
Let $C$ be a conjugation on $\cH$. Then
\begin{enumerate}
\item[(i)] $(\CO_C,\|\cdot\|)$ is isometrically isomorphic to the  dual space of  $(\CO_{C,1}, \|\cdot\|_1)$;
\item[(ii)] $(\CO_{C,1}, \|\cdot\|_1)$ is isometrically isomorphic to the  dual space of  $(\CO_{C,\infty}, \|\cdot\|_\infty)$;
\item[(iii)] $(\CO_{C,p}, \|\cdot\|_p)$ is isometrically isomorphic to the  dual space of  $(\CO_{C,q}, \|\cdot\|_q)$ for $1< p, q<\infty$ with $\frac{1}{p}+\frac{1}{q}=1$.
\end{enumerate}
\end{proposition}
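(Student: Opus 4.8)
The plan is to reduce each duality statement to the corresponding classical Schatten-class duality for $\BH$ by exhibiting $\CO_C$ (resp. $\CO_{C,p}$) as a $1$-complemented subspace of $\BH$ (resp. $\BPH$), via a projection that is compatible with the trace pairing. Concretely, fix the orthonormal basis $\{e_n\}$ with $Ce_n=e_n$, so that $\CO_C$ is the space of operators whose matrix $(a_{i,j})$ satisfies $a_{i,j}=-a_{j,i}$. Define the skew-symmetrization map $P:\BH\to\CO_C$ by $P(X)=\frac12(X+CXC)$; equivalently $P(X)=\frac12(X-X^t)$ in the notation $X^t=CX^*C$ used earlier, since $CXC = C(X^*)^*C$ and $(CXC)^* = CX^*C$. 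One checks directly that $P$ is a linear idempotent with range exactly $\CO_C$, that $P(X)\in\BPH$ whenever $X\in\BPH$ with $\|P(X)\|_p\le\|X\|_p$ (because $X\mapsto X^t=CX^*C$ is an isometry of $\BPH$: conjugation by $C$ preserves singular values and the adjoint preserves the $p$-norm), and symmetrically $\|P(X)\|_\infty\le\|X\|_\infty$.

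The key linear-algebraic point is that $P$ is self-adjoint with respect to the trace pairing: for $X,Y\in\BH$ with $XY\in\B_1(\cH)$ one has $\tr(P(X)Y)=\tr(XP(Y))$. This follows from $\tr(X^tY)=\tr(CX^*CY)=\overline{\tr(X^*CY^*C)}$... — more cleanly, using the real basis, $\tr(X^tY)=\sum_{i,j}(X^t)_{i,j}Y_{j,i}=\sum_{i,j}X_{j,i}Y_{j,i}$, which is symmetric in $X,Y$, hence $\tr(X^tY)=\tr(XY^t)$, giving $\tr(P(X)Y)=\tr(XP(Y))$ and in particular $\tr(P(X)Y)=\tr(P(X)P(Y))$. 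Consequently, for $1<p<\infty$ with conjugate $q$, the functional $Y\mapsto\tr(XY)$ on $\CO_{C,q}$ induced by $X\in\CO_{C,p}$ satisfies $|\tr(XY)|\le\|X\|_p\|Y\|_q$, while its norm is at least $\tr(X\,|X|^{p-2}X^*\cdot\text{(normalization)})$-type computation performed inside $\CO_{C,p}$; and any bounded functional $\phi$ on $\CO_{C,q}$ extends by Hahn–Banach to $\widetilde\phi$ on $\B_q(\cH)$ of the same norm, which by classical duality equals $Y\mapsto\tr(XY)$ for a unique $X\in\BPH$ with $\|X\|_p=\|\widetilde\phi\|$; replacing $X$ by $P(X)$ does not change the functional on $\CO_{C,q}$ (since $\tr(P(X)Y)=\tr(XP(Y))=\tr(XY)$ for $Y\in\CO_{C,q}$) and does not increase the norm, so in fact $\|P(X)\|_p=\|\phi\|$, giving the isometric identification. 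The cases (i) and (ii) are handled identically, using $\BH=\B_1(\cH)^*$ and $\B_1(\cH)=\B_\infty(\cH)^*=\KH^*$ respectively, together with the facts that $P$ maps $\KH$ into $\CO_{C,\infty}$ contractively, $\B_1(\cH)$ into $\CO_{C,1}$ contractively, and $\BH$ into $\CO_C$ contractively; one also needs that $\CO_{C,1}$ is norm-dense-independent, i.e. that restriction of $\CO_C$-functionals is injective, which follows because the trace pairing already separates points of $\CO_{C,1}$ (take $X=\bar Y$ suitably skew-symmetrized).

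The main obstacle I expect is the lower bound for the functional norm, i.e. showing that for $X\in\CO_{C,p}$ the operator realizing $\|X\|_p$ as a functional on $\B_q(\cH)$ — namely a normalized $|X|^{p-1}$-type partial isometry dilate from the polar decomposition of $X$ — can be replaced by an element of $\CO_{C,q}$ without loss. This is exactly where $P$ being a trace-pairing projection saves the day: instead of constructing the extremal $Y$ by hand inside $\CO_{C,q}$, one takes the classical extremal $Y_0\in\B_q(\cH)$ with $\tr(XY_0)=\|X\|_p$ and $\|Y_0\|_q=1$, then observes $\tr(XP(Y_0))=\tr(P(X)Y_0)=\tr(XY_0)=\|X\|_p$ while $\|P(Y_0)\|_q\le 1$, so $P(Y_0)\in\CO_{C,q}$ already witnesses the norm. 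Thus no ad hoc extremal construction inside the skew-symmetric class is needed, and all three parts reduce cleanly to the classical Schatten duality plus the contractivity and trace-compatibility of $P$.
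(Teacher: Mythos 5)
Your proposal is correct and follows essentially the same route as the paper: the skew-symmetrizing contraction $P(X)=\tfrac12(X-CX^*C)$ together with its self-adjointness for the trace pairing is precisely the paper's decomposition $X=X_1+X_2$ combined with Corollary \ref{C:SchattenP} (that $\tr(AB)=0$ for $A\in\SC$ and $B\in\CO_C$ in the relevant Schatten classes), and the Hahn--Banach extension followed by replacing the representing operator with its skew part is exactly the paper's surjectivity step. Two small points to polish: in cases (i) and (ii) the classical extremal $Y_0$ need not be attained, so use near-extremal witnesses (or, as the paper does, bound $|\widetilde{\psi}_T(X)|\le\|\psi_T\|\cdot\|X\|$ for arbitrary $X$ by projecting the test operator $X$ itself), and the termwise identity $\tr(X^tY)=\tr(XY^t)$ should be justified by finite-rank truncation along the $C$-fixed basis rather than by an unjustified interchange of an infinite double sum.
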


As an application of the preceding result, we shall describe in Subsection 3.2 the relations of derivations of $\CO_C$ and $\CO_{C,p}(p\in[1,\infty])$ (see Lemma \ref{L:Trace}).

In order to prove Proposition \ref{P:dual}, we need to make some preparation.

Given a complex matrix $A$, we denote by $A^{tr}$ the transpose of $A$.

\begin{lemma}\label{L:SchattenP}
Let $n$ be a positive integer.
If $A,B\in M_n(\bC)$ with $A=A^{tr}$ and $B=-B^{tr}$, then $\tr(AB)=0$, where $\tr(\cdot)$ is the trace function.
\end{lemma}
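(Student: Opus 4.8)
The plan is to compute $\tr(AB)$ directly using the symmetry hypotheses on $A$ and $B$ together with the two standard identities $\tr(X^{tr}) = \tr(X)$ and $\tr(XY) = \tr(YX)$. First I would observe that $\tr(AB) = \tr((AB)^{tr}) = \tr(B^{tr}A^{tr})$. Substituting $A^{tr} = A$ and $B^{tr} = -B$ gives $\tr(B^{tr}A^{tr}) = \tr((-B)A) = -\tr(BA)$. Then, by the cyclic property of the trace, $\tr(BA) = \tr(AB)$, so we arrive at $\tr(AB) = -\tr(AB)$, whence $2\tr(AB) = 0$ and $\tr(AB) = 0$.

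The argument is essentially a one-line computation, so I would simply present the chain of equalities
\[
\tr(AB) = \tr\bigl((AB)^{tr}\bigr) = \tr\bigl(B^{tr}A^{tr}\bigr) = \tr\bigl((-B)A\bigr) = -\tr(BA) = -\tr(AB),
\]
and conclude. There is no genuine obstacle here: the only things being used are that the trace is invariant under transposition and invariant under cyclic permutation of factors, both of which are elementary facts about $M_n(\bC)$ that require no justification in this context. One minor point worth noting, if one wants to be careful, is that the step $2\tr(AB) = 0 \implies \tr(AB) = 0$ uses that we are working over $\bC$ (or any field of characteristic not $2$), which is of course the case.

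I would keep the proof to a couple of lines, matching the remark in the paper that the statement "can be verified directly." No induction, no structural decomposition of skew-symmetric matrices, and no appeal to the earlier lemmas of the section are needed; this lemma is a self-contained trace identity that will presumably be invoked afterward to show that the pairing $(X,Y) \mapsto \tr(XY)$ behaves well on $\CO_{C,p}$, i.e.\ that symmetric and skew-symmetric parts are orthogonal under the trace form, which is the relevant ingredient for identifying the dual spaces in Proposition \ref{P:dual}.
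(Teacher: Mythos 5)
Your proof is correct and is essentially identical to the paper's: both use the chain $\tr(AB)=\tr((AB)^{tr})=\tr(B^{tr}A^{tr})=-\tr(BA)=-\tr(AB)$ and conclude $\tr(AB)=0$. No differences worth noting.
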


\begin{proof}
Note that $\tr(AB)=\tr(AB)^{tr}=\tr(B^{tr} A^{tr})=-\tr(B A)=-\tr(AB)$. So $\tr(AB)=0$.
\end{proof}

\begin{corollary}\label{C:SchattenP}
Let $C$ be a conjugation on $\cH$. Assume that $A\in\SC$ and $B\in\CO_C$.
If (i) $A\in \B_1(\cH)$, or (ii) $A\in\BPH$ and $B\in \B_q(\cH)$, where $1<p,q<\infty$ and $\frac{1}{p}+\frac{1}{q}=1$, then $\tr(AB)=0$.
\end{corollary}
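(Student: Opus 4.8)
The plan is to reduce Corollary \ref{C:SchattenP} to the finite-dimensional identity of Lemma \ref{L:SchattenP} by a standard finite-rank truncation argument. First I would fix an orthonormal basis $\{e_n\}_{n=1}^\infty$ of $\cH$ with $Ce_n=e_n$ for all $n$, which exists by the discussion in the introduction (\cite[Lem. 2.11]{Gar14}). Relative to this basis, the hypothesis $A\in\SC$ means the matrix $(a_{i,j})$ of $A$ is symmetric, i.e. $a_{i,j}=a_{j,i}$, and $B\in\CO_C$ means the matrix $(b_{i,j})$ of $B$ is skew-symmetric, i.e. $b_{i,j}=-b_{j,i}$. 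In either case (i) or (ii), the product $AB$ lies in $\B_1(\cH)$: in case (i) $A$ is trace class and $B$ is bounded, and in case (ii) $A\in\BPH$, $B\in\B_q(\cH)$, so by H\"older's inequality for Schatten classes $AB\in\B_1(\cH)$ with $\|AB\|_1\le\|A\|_p\|B\|_q$. Hence $\tr(AB)$ is well defined and equals $\sum_{n}\la ABe_n,e_n\ra$.

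Next I would exhibit $\tr(AB)$ as a limit of traces of finite compressions. Let $P_N$ denote the orthogonal projection onto the span of $e_1,\dots,e_N$; note $CP_NC=P_N$ since $C$ fixes each $e_n$. Then $P_NAP_N$ has the symmetric matrix $(a_{i,j})_{1\le i,j\le N}$ and $P_NBP_N$ has the skew-symmetric matrix $(b_{i,j})_{1\le i,j\le N}$, viewed as operators on the $N$-dimensional space $P_N\cH$; equivalently $P_NAP_N\in\mathcal{S}_{C|_{P_N\cH}}$ and $P_NBP_N\in\CO_{C|_{P_N\cH}}$. By Lemma \ref{L:SchattenP} applied in dimension $N$, $\tr\big((P_NAP_N)(P_NBP_N)\big)=0$ for every $N$, that is $\sum_{n=1}^N\la AP_NBe_n,e_n\ra=0$. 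The remaining point is to let $N\to\infty$ and recover $\tr(AB)=\sum_n\la ABe_n,e_n\ra$.

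The main obstacle — and really the only substantive step — is justifying this passage to the limit, i.e. showing $\tr(P_NAP_NBP_N)\to\tr(AB)$. The clean way is to observe $P_NAP_NBP_N\to AB$ in the trace norm $\|\cdot\|_1$, since then $\tr$ is $\|\cdot\|_1$-continuous and the conclusion is immediate. In case (ii) one writes $AB-P_NAP_NBP_N = (A-P_NAP_N)B + P_NAP_N(B-BP_N)$ and uses that $P_N\to I$ strongly together with $A\in\BPH$, $B\in\B_q(\cH)$ to get $\|A-P_NAP_N\|_p\to 0$ and $\|B-BP_N\|_q\to 0$ (approximation of Schatten-class operators by their finite truncations), then H\"older gives $\|AB-P_NAP_NBP_N\|_1\to 0$. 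Case (i) is the same with the roles arranged so that the trace-class factor carries the $\|\cdot\|_1$ estimate and $B$ is controlled in operator norm. Therefore $\tr(AB)=\lim_N\tr(P_NAP_NBP_N)=\lim_N 0=0$, which completes the proof.

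I should double-check one subtlety: that the compression $P_NAP_N$, as an operator on $P_N\cH$, genuinely has a \emph{symmetric} matrix and not merely a Hermitian one — this is exactly where $A\in\SC$ (as opposed to $A$ merely self-adjoint) is used, and it holds because $CAC=A^*$ forces $\overline{a_{j,i}}=\overline{a_{i,j}}$... more precisely $a_{i,j}=\la Ae_j,e_i\ra = \la CAe_j, Ce_i\ra^{-}{}^{-}$ unwinds via $CAC=A^*$ to $a_{i,j}=a_{j,i}$, and this symmetry is inherited by every principal submatrix. Likewise skew-symmetry of $(b_{i,j})$ is inherited by principal submatrices. So Lemma \ref{L:SchattenP} applies verbatim to the $N\times N$ truncations, and no further care is needed there.
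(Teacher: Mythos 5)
Your proposal is correct and follows essentially the same route as the paper's own proof: fix an orthonormal basis fixed pointwise by $C$, observe that the compressions $P_NAP_N$ and $P_NBP_N$ have symmetric and skew-symmetric matrices so that Lemma \ref{L:SchattenP} gives vanishing trace at each finite stage, and pass to the limit via trace-norm convergence of the truncations (the paper writes this out for case (i) and notes case (ii) is analogous, exactly as you do via H\"older). The only blemish is the garbled inner-product manipulation in your final paragraph, but the symmetry computation it is meant to encode is the same one the paper carries out and your conclusion $a_{i,j}=a_{j,i}$ is correct.
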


\begin{proof}
We just give the proof in the case (i). The proof for the case (ii) is similar.

Since $C$ is a conjugation on $\cH$, by \cite[Lem. 2.11]{Gar14}, there exists an orthonormal basis $\{e_n\}$ such that $Ce_n=e_n$ for all $n$.
For each $n\geq 1$, denote by $P_n$ the projection of $\cH$ onto $\vee\{e_i: 1\leq i\leq n\}$.

Note that $P_n\rightarrow I$ in the strong operator topology. It follows that $\lim_n\|P_nAP_n-A\|_1=0$ and furthermore
$$\|P_nAP_nB-AB\|_1\leq \|P_nAP_n-A\|_1\cdot\|B\|\rightarrow 0$$ as $n\rightarrow \infty$. Thus $\tr(AB)=\lim_n\tr(P_nAP_nB)$.
It suffices to prove that $\tr(P_nAP_nB)=0$ for all $n$.

For each $n$, assume that
\[A=\begin{bmatrix}
A_n&*\\
*&*
\end{bmatrix}\begin{matrix}
\ran P_n \\ \ran (I-P_n)
\end{matrix},\ \ \ B=\begin{bmatrix}
B_n&*\\
*&*
\end{bmatrix}\begin{matrix}
\ran P_n \\ \ran (I-P_n)
\end{matrix}.\] It follows that $\tr(P_nAP_nB)=\tr(A_nB_n)$.
For $1\leq i,j\leq n$, note that
\begin{align*}
\la A_n e_i,e_j\ra&=\la A e_i,e_j\ra=\la A e_i, C e_j\ra\\
&=\la e_i, A^* C e_j\ra=\la e_i, C A e_j\ra\\
&=\la A e_j, C e_i \ra=\la A e_j, e_i \ra\\
&=\la A_n e_j, e_i \ra
\end{align*} and similarly that $\la B_n e_i,e_j\ra=-\la B_n e_i,e_j\ra$. Thus, relative to $\{e_1,e_2,\cdots,e_n\}$,
$A_n$ admits a symmetric matrix representation and $B_n$ admits a skew-symmetric matrix representation (that is, $R=-R^{tr}$).
By Lemma \ref{L:SchattenP}, $\tr(P_nAP_nB)=\tr(A_nB_n)=0$. Therefore we conclude that $\tr(AB)=0$.
\end{proof}

Given a Banach space $\mathcal X$, we let $\mathcal X'$ denote its dual space.

\begin{proof}[Proof of Proposition \ref{P:dual}]
(i) For $T\in \CO_C$, denote $$\psi_T:\CO_{C,1}\rightarrow\bC, \ \ \ \psi_T(X)=\tr(XT),\ \  \forall X\in\CO_{C,1}.$$
Then $|\psi_T(X)|\le \|X\|_1\cdot\|T\|$. Thus $\psi_T\in(\CO_{C,1})'.$

It suffices to prove that the map $\psi: T\rightarrow\psi_T$ is an isometric isomorphism of $\CO_C$ onto $(\CO_{C,1})'$. Clearly, $\psi$ is linear. It remains to check that $\psi$ is isometric and surjective.

{\it Step 1}. $\psi$ is isometric.

Fix a $T\in\CO_C$. Clearly, $\psi_K$ can be extended to the linear functional $\widetilde{\psi}_T$ on $\mathcal B_1(\cH)$ defined by \[\widetilde{\psi}_T(X)=\tr(XT), \ \ \forall X\in\mathcal B_1(\cH).\]Then, by \cite[Thm. 2.3.12]{Ringrose}, $\|\psi_T\|\le \|\widetilde{\psi}_K\|=\|T\|$ .

For any $X\in\mathcal B_1(\cH)$, denote $X_1=\frac{1}{2}(X+CX^*C)$ and $X_2=\frac{1}{2}(X-CX^*C)$. Note that $X_1\in\mathcal S_C\cap\mathcal B_1(\cH)$, $X_2\in\CO_{C,1}$ and $\|X_2\|_1\le\|X\|_1$. By Corollary 2.11,
\[|\widetilde{\psi}_T(X)|=|\tr(X_1T+X_2T)|=|\tr(X_2T)|=|\psi_T(X_2)|\le\|\psi_T\|\cdot\|X_2\|_1\le\|\psi_T\|\cdot\|X\|_1.\]Thus $\|\widetilde{\psi}_T\|\le\|\psi_T\|.$ Furthermore, we obtain $\|\psi_T\|=\|\widetilde{\psi}_T\|=\|T\|.$ It shows that $\psi$ is isometric.

{\it Step 2} $\psi$ is surjective.

For any $f\in (\CO_{C,1})'$, $f$ can be extended to a bounded linear functional $\tilde f$ on $\mathcal B_1(\cH)$, since $\CO_{C,1}$ is a subspace of $\mathcal B_1(\cH).$ Then, by \cite[Thm. 2.3.12]{Ringrose}, there exists an operator $T\in \mathcal B(\cH)$ such that $\tilde f(X)=\tr(XT)$ for all $X\in\mathcal B_1(\cH).$ Denote $T_1=\frac{1}{2}(T+CT^*C)$ and $T_2=\frac{1}{2}(T-CT^*C)$. Then $T_1\in\mathcal S_C$ and $T_2\in\CO_C.$ Thus, for each $X\in\CO_{C,1}$,
\[f(X)=\tilde f(X)=\tr(XT)=\tr(XT_1)+\tr(XT_2)=\tr(XT_2)=\psi_{T_2}(X).\] That is, $f=\psi_{T_2}.$ Hence $\psi$ is surjective.

(ii) Recall that $\CO_{C,\infty}=\KH\cap \CO_{C}$. For $T\in\CO_{C,1}$, denote $$\psi_T: \CO_{C,\infty}\rightarrow \mathbb C,\ \ \ \psi_T(X)=\tr(XT),\ \ \forall X\in\CO_{C,\infty}.$$
It is easily seen that $|\psi_T(X)|\le \|X\|\cdot\|T\|_1$. That is, $\psi_T\in(\CO_{C,\infty})'.$

In what follows, we will show that the map $\psi: T\rightarrow\psi_T$ is an isometric isomorphism of $\CO_{C,1}$ onto $(\CO_{C,\infty})'$. Clearly, $\psi$ is linear. It remains to check that $\psi$ is isometric and surjective.

{\it Step 1}. $\psi$ is isometric.

Fix a $T\in\CO_{C,1}$. It is easy to see that $\psi_T$ can be extended to the linear functional $\widetilde{\psi}_T$ on $\KH$ defined by \[\widetilde{\psi}_T(X)=\tr(XT),\ \ \forall X\in\KH.\]Then, by \cite[Thm. 2.3.12]{Ringrose}, $\|\psi_T\|\le \|\widetilde{\psi}_T\|=\|T\|_1$ .

For any $X\in\KH$, denote $X_1=\frac{1}{2}(X+CX^*C)$ and $X_2=\frac{1}{2}(X-CX^*C)$. Note that $X_1\in\mathcal S_C\cap\KH$, $X_2\in\CO_{C,\infty}$ and $\|X_2\|\le\|X\|$. By Corollary 2.11,
\[|\widetilde{\psi}_T(X)|=|\tr(X_1T+X_2T)|=|\tr(X_2T)|=|\psi_T(X_2)|\le\|\psi_T\|\cdot\|X_2\|\le\|\psi_T\|\cdot\|X\|.\]Hence $\|\widetilde{\psi}_T\|\le\|\psi_T\|.$ Furthermore, we obtain $\|\psi_T\|=\|\widetilde{\psi}_T\|=\|T\|_1.$ It shows that $\psi$ is isometric.

{\it Step 2}. $\psi$ is surjective.

For any $f\in(\CO_{C,\infty})'$, $f$ can be extended to a bounded linear functional $\tilde f$ on $\KH$, since $\CO_{C,\infty}$ is a subspace of $\KH.$ Then, by \cite[Thm. 2.3.12]{Ringrose}, there exists an operator $T\in \mathcal B_1(\cH)$ such that $\tilde f(X)=\tr(XT)$ for all $X\in\KH.$ Denote $T_1=\frac{1}{2}(T+CT^*C)$ and $T_2=\frac{1}{2}(T-CT^*C)$. Then $T_1\in\mathcal S_C\cap\KH$ and $T_2\in\CO_{C,\infty}.$ Thus, for each $X\in\CO_{C,\infty}$,
\[f(X)=\tilde f(X)=\tr(XT)=\tr(XT_1)+\tr(XT_2)=\tr(XT_2)=\psi_{T_2}(X).\] That is, $f=\psi_{T_2}.$ Hence $\psi$ is surjective.

(iii) The proof follows similar lines as those of (i) and (ii), and is omitted.
\end{proof}

%\begin{question}
%Let $\cL$ be a linear manifold of $\BH$ and $C$ be a conjugation on $\cH$.
%Denote $\cL^*:=\{A^*: A\in\cL\}$ and $C\cL C=\{CXC: X\in\cL \}$.
%
%1. If $\cL$ is a Lie ideal of $\BH$, then does it follow that $\cL^*$ and $C\cL C$ are Lie ideals of $\BH$?
%
%2. If $\cL$ is a Lie ideal of $\BH$, then does it follow that $\cL^*=\cL$ and $C\cL C=\cL$?
%
%3. If $\cL$ is a Lie ideal of $\BH$, then does it follow that $\cL\cap\CO=\vee\{EF+FE^t: E\in \cL, F\in\CO\}$?
%\end{question}

%\begin{question}
%1. Give a summary of results on the classification of finite dimensional Lie algebras.
%
%2. Describe representations of $\CO$.
%
%3. Lie Derivation?
%
%4. enveloping algebra of $\CO$? $\BH$?
%
%
%
%6. solvable
%
%7. simple, semi-simple, radical
%
%8. normalizer, Cartan subalgebra
%
%9. the derived algebra of $\KH$ is itself.
%\end{question}

%%%%%%%%%%%%%%%%%%%%%%%%%%%%%%%%%%%%%%%%%%
\section{Spectra of derivations}

%C. Davis and P. Rosenthal proved that $\sigma_\pi(\delta_A)=\sigma_l(A)-\sigma_r(A)$ and $\sigma_\delta(\delta_A)=\sigma_r(A)-\sigma_l(A)$.
%L. A. Fialkow proved that $\sigma_\pi(\delta_A)=\sigma_l(\delta_A)$ and $\sigma_\delta(\delta_A)=\sigma_r(\delta_A)$.
%
%
%C. Davis and P. Rosenthal, L. A. Fialkow
%
%L. A. Fialkow
%%%%%%%%%%%%%%%%%%%%%%%%%%%%%%%%%%%%%%%%%%
The aim of this section  is to describe the spectra of Lie derivations of $\CO_C$ for $C$ a conjugation on $\cH$.
To state our result, we introduce some terminology and notations.

Given a Banach space $\mathcal X$, we denote by $\mathcal{B(X)}$ the set of all bounded linear operators on $\mathcal X.$ For $A\in\mathcal{B(X)}$, we denote by $\ran A$ the range of $A$, and by $\ker A$ the kernel of $A$. We let $\sigma(A)$, $\sigma_l(A)$, $\sigma_r(A)$, $\sigma_\pi(A)$  and $ \sigma_\delta(T)$ denote respectively the spectrum, the left spectrum, the right spectrum, the approximate point spectrum and the approximate defect spectrum of $A$. That is,
\[\sigma_l(A)=\{z\in\bC : A-z \textup{ is not left invertible}\},\]
\[\sigma_r(A)=\{z\in\bC : A-z \textup{ is not right invertible}\},\]
\[\sigma_\pi(A)=\{z\in\bC : A-z \textup{ is not bounded below}\},\]
and \[\sigma_\delta(A)=\{z\in\bC : A-z \textup{ is not surjective}\}.\]
It is completely apparent that $\sigma_\pi(A)\subset\sigma_l(A)$ and $ \sigma_\delta(A)\subset\sigma_r(A)$.

Let $T\in\mathcal{B(H)}$  and suppose that $\lambda$ is an isolated point of $\sigma(T)$. Then
there exists an analytic Cauchy domain $\Omega$ such that
$\lambda\in\Omega$ and
$[\sigma(T)\setminus\{\lambda\}]\cap\overline{\Omega}=\emptyset$. We let
$E(\lambda; T)$ denote the {\it Riesz idempotent} of $T$
corresponding to $\lambda$, i.e.
\[E(\lambda;T)=\frac{1}{2\pi
\textup{i}}\int_{\Gamma}(z-T)^{-1}\textup{d}z,\] where
$\Gamma=\partial\Omega$ is positively oriented with respect to
$\Omega$ in the sense of complex variable theory. Denote $\mathcal{H}(\lambda;T)=\ran E(\lambda; T)$.
The reader is referred to \cite[Chapter 1]{Herr89} or \cite[Chapter VII]{ConwayFA} for more about Riesz idempotents.

Given a subset $\Gamma$ of $\bC$, we let $\iso\Gamma$ denote the set of all isolated points of $\Gamma$.

For $T\in\BH$, we denote by
$\Xi(T)$ the set of all isolated points $z$ of $\sigma(T)$ satisfying
\begin{enumerate}
\item[(a)] $\dim\cH(z;T)=1$,
\item[(b)] $z$ lies in the closure of the unbounded components of $\bC\setminus\sigma(T),$
\item[(c)] $2z\in\iso[\sigma(T)+\sigma(T)]$, and
\item[(d)] there exist no distinct $z_1,z_2\in\partial\sigma(T)$ such that $2z=z_1+z_2$.
\end{enumerate}
Obviously, $\Xi(T)$ is at most countable and $\Xi(2T)=\{2z:z\in\Xi(T)\}$.

The main result of this section is the following theorem.

\begin{theorem}\label{T:SpecDeriva}
Let $T\in\CO_C$. Then
\begin{enumerate}
\item[(i)] $\sigma(\ad_T)=\sigma_l(\ad_T)=\sigma_r(\ad_T)=\sigma_\pi(\ad_T)=\sigma_\delta(\ad_T)$;
\item[(ii)] $\sigma(\ad_T)=[\sigma(T)+\sigma(T)]\setminus\Xi(2T)$.
\end{enumerate}
\end{theorem}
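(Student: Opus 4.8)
The plan is to deduce the statement about $\ad_T$ from the known description of $\sigma(\delta_T)$ and its parts, where $\delta_T \colon X \mapsto TX - XT$ acts on all of $\BH$. Recall from the work of Rosenblum, Kleinecke, Davis--Rosenthal and Fialkow (cited in the introduction) that $\sigma(\delta_T) = \sigma(T) + \sigma(T)$ (more precisely $\sigma(T) - \sigma(T)$, but for $T \in \CO_C$ one has $\sigma(T) = -\sigma(T)$, so the two agree), and that the various one-sided and approximate spectra of $\delta_T$ all coincide with $\sigma(T) + \sigma(T)$ as well. The key structural input on the $\CO_C$ side will be the decomposition $\BH = \SC \oplus \CO_C$ as a direct sum of Banach spaces (an operator $X$ splits as $\frac12(X + CX^*C) + \frac12(X - CX^*C)$), together with the observation that $\delta_T$ maps $\SC$ into $\SC$ and $\CO_C$ into $\CO_C$ when $T \in \CO_C$: if $CTC = -T^*$ and $CXC = X^*$, then $C(TX-XT)C = T^*X^* \cdots$, a short computation shows $C\delta_T(X)C = (\delta_T X)^*$, and dually for the skew part. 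Thus $\delta_T = \delta_T|_{\SC} \oplus \ad_T$, and the two summands can be analyzed against the ambient $\delta_T$.

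First I would establish $\sigma(\ad_T) \subset \sigma(\delta_T) = \sigma(T)+\sigma(T)$ and, via the direct-sum splitting, the reverse-type containments among the parts: since $\delta_T = \delta_T|_{\SC} \oplus \ad_T$, each of $\sigma_l,\sigma_r,\sigma_\pi,\sigma_\delta,\sigma$ of $\delta_T$ is the union of the corresponding part of $\delta_T|_{\SC}$ and of $\ad_T$. Because the corresponding parts of $\delta_T$ all coincide (this is the ``$\delta_T$'' analogue of (i), known from the literature — see Remark~\ref{R:comparison}), the parts of $\ad_T$ are squeezed between one another, giving (i) once we know $\sigma(\ad_T)$ itself. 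The heart of the matter is therefore (ii): determining exactly which points of $\sigma(T)+\sigma(T)$ survive in $\sigma(\ad_T)$. A point $z_0 = \lambda + \mu$ lies outside $\sigma(\ad_T)$ precisely when $\delta_T - z_0$ is invertible on $\CO_C$ but (possibly) not on $\SC$; since $\delta_T - z_0$ is already invertible on all of $\BH$ off the set $\sigma(T)+\sigma(T)$, the only candidates are $z_0 \in \sigma(T)+\sigma(T)$ at which $\ker(\delta_T - z_0)$ and $\ran(\delta_T - z_0)^{\perp}$-type obstructions live entirely inside $\SC$. I would show this forces $z_0 = 2\lambda$ for an isolated point $\lambda \in \sigma(T)$ of the special type recorded in $\Xi(T)$: the eigenoperators of $\delta_T$ at $2\lambda$ are spanned by $E(\lambda;T)$-type rank-one pieces, and the condition that the relevant finite-dimensional eigenspace of $\delta_T$ be contained in $\SC$ (leaving nothing in $\CO_C$) translates, using $\dim\cH(\lambda;T)=1$ and conditions (b)--(d) in the definition of $\Xi(T)$, into $2\lambda \in \Xi(2T)$. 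Conversely, for $2\lambda \in \Xi(2T)$ I would check directly that $\delta_T - 2\lambda$ restricted to $\CO_C$ is bounded below and surjective, i.e. $2\lambda \notin \sigma(\ad_T)$; here Proposition~\ref{P:dual} enters, allowing surjectivity on $\CO_C$ to be read off from boundedness-below of the adjoint (the map on $\CO_{C,1}$), exactly as in the scalar case where one plays $\sigma_\delta$ against $\sigma_\pi$.

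The main obstacle I anticipate is the fine bookkeeping at an isolated point $\lambda$ of $\sigma(T)$: one must understand the local structure of $\delta_T$ near $2\lambda$ well enough to decide whether the whole generalized eigenspace of $\delta_T$ at $2\lambda$ sits inside $\SC$. This requires combining the Riesz-idempotent calculus for $T$ at $\lambda$ (to pin down $\cH(\lambda;T)$ and its one-dimensionality) with the description of $\ker(\delta_T - 2\lambda)$ in terms of intertwiners, and then carefully using the involution $X \mapsto CX^*C$ to separate symmetric and skew parts — the subtlety being that a one-dimensional $\delta_T$-eigenspace is automatically spanned by a $C$-symmetric operator, whereas as soon as there is extra multiplicity (from $\dim\cH(\lambda;T)\geq 2$, or from a second point $\mu$ with $\lambda+\mu = 2\lambda$ contributing, i.e. failure of (d)) one finds a skew eigenoperator and $2\lambda$ re-enters $\sigma(\ad_T)$. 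Condition (b) is needed to rule out the ``approximate'' obstructions (points of $\sigma_\pi(\delta_T) \setminus \sigma_p(\delta_T)$) hiding only in $\SC$; handling those cleanly, rather than just the isolated-eigenvalue case, is where most of the technical care will go. Once $\sigma(\ad_T) = [\sigma(T)+\sigma(T)] \setminus \Xi(2T)$ is proved, statement (i) follows by the squeezing argument above, since the same set is forced on each of the five spectral parts.
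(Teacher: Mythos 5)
Your route is, at the level of strategy, the same as the paper's: the splitting $\BH=\CO_C\oplus\SC$ diagonalizing $\delta_T$ (the paper's Lemma \ref{L:Defect}), the identification of the exceptional set with $\Xi(2T)$ via Riesz idempotents at isolated points, and the duality $\ad_T\sim-\ad_{T,1}'$ coming from Proposition \ref{P:dual} (Proposition \ref{L:Trace}) to trade surjectivity for bounded-below-ness. However, one step of your plan fails as stated. The squeezing argument for (i) does not work: from $\delta_T=\ad_T\oplus\delta_T|_{\SC}$ you only get that each spectral part of $\delta_T$ is the \emph{union} of the corresponding parts of the two summands, and the coincidence of all parts of $\delta_T$ gives equality of unions, not of the $\ad_T$-summands. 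A point of $\sigma(\ad_T)\setminus\sigma_\pi(\ad_T)$ can perfectly well be absorbed by $\sigma_\pi(\delta_T|_{\SC})$; compare $S\oplus S^*$ for the unilateral shift $S$, where every spectral part of the direct sum is the closed disk yet $\sigma_\pi(S)$ is only the circle. Nor does knowing $\sigma(\ad_T)$ "force" the five parts to agree. The paper repairs exactly this by proving the $\sigma_\pi$-computation not only for $\ad_T$ but for every $\ad_{T,p}$, $p\in[1,\infty]$ (Theorem \ref{T:app}), then deducing $\sigma_\delta(\ad_T)=-\sigma_\pi(\ad_{T,1})$ from the duality and invoking the symmetries $\sigma(T)=-\sigma(T)$, $\Xi(T)=-\Xi(T)$ (Lemma \ref{L:SSOBasic}) to see that the resulting set is negation-invariant. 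You do invoke the duality, but only locally at points of $\Xi(2T)$; to get (i) you must run it globally, and that in turn obliges you to carry out the approximate-point-spectrum analysis on $(\CO_{C,1},\|\cdot\|_1)$ and not just on $(\CO_C,\|\cdot\|)$.

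Beyond that, the two steps that carry essentially all the weight are left as acknowledged "bookkeeping." For the inclusion $[\sigma(T)+\sigma(T)]\setminus\Xi(2T)\subset\sigma_\pi(\ad_T)$ one needs explicit skew-symmetric approximate eigenvectors --- the paper uses the rank-two operators $e\otimes(Cf)-f\otimes(Ce)$ with the lower norm bound $1-|\la f,e\ra|^2$ (Proposition \ref{P:key}) --- and, crucially, for $2z$ with $z$ an interior point of $\sigma(T)$ (where no eigenvectors need exist) a planar-geometry fact, $\Omega+\Omega\subset\partial\Omega+\partial\Omega$ for bounded connected open $\Omega$ (Lemma \ref{L:plane}, Corollary \ref{C:plane1}), to rewrite $2z$ as a sum of two \emph{distinct} boundary points; your proposal does not address interior points at all. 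For the exclusion of $\Xi(2T)$, the issue is not just locating $\ker(\delta_T-2z)$ but bounding $\ad_T-2z$ below on all of $\CO_C$; the paper's Lemma \ref{L:eliminate} does this by a $3\times3$ block decomposition along $\cH(z;T)\oplus\cH_2\oplus\cH(-z;T)$, using that $C$ interchanges the two one-dimensional corner spaces, that $\dim\cH(-z;T)=1$ (a cited fact special to skew-symmetric operators), and that conditions (c) and (d) of $\Xi$ force $3z\notin\sigma(A)$ and $2z\notin\sigma(A)+\sigma(A)$ for the middle block. You correctly identify this as the main obstacle, but none of it is supplied, so the proposal as written establishes neither inclusion in (ii).
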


For $T\in\CO_C$, Theorem \ref{T:SpecDeriva} implies that $\sigma(\ad_T)$ is obtained from $\sigma(T)+\sigma(T)$ by eliminating some isolated points.

%
%For $p\in \{0\}\cup [1, \infty)$, denote $\CO_{C,p}=\CO_C\cap\BP(\cH)$ and $\SCP=\SC\cap\BP(\cH)$. For $T\in\CO_C$, note that $\CO_{C,p}$ is invariant under $\ad_T$ and $\SCP$ is invariant under $U_T$.  Denote $\ad_{T,p}=(\ad_T)|_{\CO_{C,p}}$ and $U_{T,p}=(U_T)|_{\SCP}$ .  We view $\ad_{T,p}$ as a linear operator on $(\CO_{C,p}, \|\cdot\|_p)$, and  view $U_{T,p}$ as a linear operator on $(\SCP,\|\cdot\|_p)$. By {\cite[Thm. 2.3.10]{Ringrose}}, $\ad_{T,p}$ and $U_{T,p}$ are
%bounded.

%
%For $p\in \{0\}\cup [1, \infty)$, denote $\CO_{C,p}=\CO_C\cap\BP(\cH)$. For $T\in\CO_C$, note that $\CO_{C,p}$ is invariant under $\ad_T$.  Denote $\ad_{T,p}=(\ad_T)|_{\CO_{C,p}}$.  We view $\ad_{T,p}$ as a linear operator on $(\CO_{C,p}, \|\cdot\|_p)$. By {\cite[Thm. 2.3.10]{Ringrose}}, $\ad_{T,p}$ are bounded.

The following result describes the spectrum of $\ad_{T,p}$ for $T\in\CO_C$ and its different parts.

\begin{theorem}\label{T:IdealSpecDeriva}
Let $T\in\CO_C$ and $p\in[1,\infty]$. Then
$$\sigma(\ad_{T,p})=\sigma_l(\ad_{T,p})=\sigma_r(\ad_{T,p})=\sigma_\pi(\ad_{T,p})=\sigma_\delta(\ad_{T,p})=\sigma(\ad_T).$$
\end{theorem}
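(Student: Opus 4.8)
The plan is to reduce the spectral analysis of the restriction $\ad_{T,p}$ to that of the already-understood operator $\ad_T = \ad_{T,\infty}$ on $(\CO_C,\|\cdot\|)$, by exploiting the duality established in Proposition \ref{P:dual} together with the natural transpose-type relations among the operators $\ad_{T,p}$. First I would record the elementary fact that for each $p\in[1,\infty]$ the space $\CO_{C,p}$ is $\ad_T$-invariant (stated in the introduction) and that $\ad_{T,p}$ is bounded on $(\CO_{C,p},\|\cdot\|_p)$ by \cite[Thm.~2.3.10]{Ringrose}. The key structural observation is that under the isometric isomorphisms of Proposition \ref{P:dual}, the adjoint (Banach-space dual) of $\ad_{T,q}$ is, up to a sign, $\ad_{T,p}$: concretely, for $X\in\CO_{C,q}$ and $Y\in\CO_{C,p}$ with $\psi_Y(X)=\tr(XY)$, one has $\psi_Y(\ad_{T,q}X)=\tr((TX-XT)Y)=\tr(X(YT-TY))=-\psi_{\ad_{T,p}Y}(X)=\psi_{-\ad_{T,p}Y}(X)$, using that $\tr$ is cyclic. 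Hence $(\ad_{T,q})^{*}=-\ad_{T,p}$ under the identifications $(\CO_{C,q})'\cong\CO_{C,p}$ given by Proposition \ref{P:dual}, for the dual pairs $(q,p)\in\{(1,\infty),(\infty,1)\}$ and all conjugate pairs $1<p,q<\infty$.

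From here the argument is essentially bookkeeping with standard Banach-space spectral theory. For a bounded operator $A$ on a Banach space one has $\sigma(A^{*})=\sigma(A)$, $\sigma_l(A^{*})=\sigma_r(A)$, $\sigma_r(A^{*})=\sigma_l(A)$, $\sigma_\pi(A^{*})=\sigma_\delta(A)$ and $\sigma_\delta(A^{*})=\sigma_\pi(A)$ (when the dual pairing is exact, which it is here). Combining this with $(\ad_{T,q})^{*}=-\ad_{T,q'}$ (writing $q'$ for the conjugate index, with $1'=\infty$ and $\infty'=1$) and with Theorem \ref{T:SpecDeriva}(i), which already tells us that all five spectral sets of $\ad_{T,\infty}=\ad_T$ coincide, I would chase the equalities: the case $p=\infty$ is Theorem \ref{T:SpecDeriva}; the case $p=1$ follows since $\ad_{T,1}=-(\ad_{T,\infty})^{*}$ on $(\CO_{C,\infty})'$, so $\sigma(\ad_{T,1})=\sigma(\ad_{T,\infty})=\sigma(\ad_T)$ and likewise all the one-sided/approximate spectra transfer (the left/right swap is harmless because Theorem \ref{T:SpecDeriva}(i) makes them equal anyway). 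For $1<p<\infty$ with conjugate $q$, one first needs the equalities for $\ad_{T,q}$; but $p$ and $q$ play symmetric roles and each is the negative adjoint of the other, so it suffices to prove the common spectral identity for one index in $(1,\infty)$ and propagate.

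To close that last gap — establishing the result for a single $p_0\in(1,\infty)$ — I would use an interpolation/inclusion argument: since $\CO_{C,1}\subset\CO_{C,p}\subset\CO_{C,\infty}$ with dense, norm-decreasing inclusions, and $\ad_{T,p}$ is the common restriction of $\delta_T$, the spectrum $\sigma(\ad_{T,p})$ cannot be larger than $\sigma(\delta_T)$ nor, by the inclusion of the ideals and Riesz-theory continuity of resolvents, smaller than $\sigma(\ad_T)$; combined with the fact that $\sigma(\ad_{T,q})=\sigma(\ad_{T,p})$ by the adjoint relation, one squeezes all of them to equal $\sigma(\ad_T)=[\sigma(T)+\sigma(T)]\setminus\Xi(2T)$. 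I expect the main obstacle to be precisely this step for the intermediate Schatten classes: unlike the $\ell^1$--$\ell^\infty$ endpoints, one cannot simply invoke a single duality, and one must either run an interpolation argument or directly verify that the eigenvector/approximate-eigenvector constructions used in the proof of Theorem \ref{T:SpecDeriva} (which presumably build rank-one or finite-rank members of $\CO_C$) in fact lie in $\CO_{C,p}$ and have controlled $\|\cdot\|_p$-norms, so that bounded-below failure and surjectivity failure survive passage to the $p$-norm. Once those constructions are seen to take place inside the finite-rank skew-symmetric operators — which are dense in every $\CO_{C,p}$ — the coincidence of all spectral parts with $\sigma(\ad_T)$ follows, and the corollary on $\textsl{so}(n,\bC)$ (Theorem \ref{T:DerivFinDim}) is the finite-dimensional specialization where $\sigma=\sigma_\pi=\sigma_\delta$ automatically and $\Xi(T)$ reduces to the stated rank condition.
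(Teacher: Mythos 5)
Your overall strategy --- the trace-pairing duality identifying $(\ad_{T,q})'$ with $-\ad_{T,p}$ (this is the paper's Proposition \ref{L:Trace} and Corollary \ref{C:dual}), combined with a direct check that the approximate-eigenvector constructions are finite-rank skew-symmetric operators with controlled $\|\cdot\|_p$-norms --- is exactly the route the paper takes, and your closing sentences identify the right mechanism: every approximate eigenvector used has rank at most two, of the form $e\otimes(Cf)-f\otimes(Ce)$, so all $p$-norms on these are uniformly comparable (Lemma \ref{L:NormEsit}: $1-|\la f,e\ra|^2\le\|X\|\le\|X\|_p\le 2$), and the paper's Theorem \ref{T:app} computes $\sigma_\pi(\ad_{T,p})=[\sigma(T)+\sigma(T)]\setminus\Xi(2T)$ for every $p\in[1,\infty]$ in one pass, after which your duality bookkeeping (with the sign absorbed by $\sigma(T)=-\sigma(T)$, Lemma \ref{L:SSOBasic}) finishes the proof. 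However, the interpolation argument you offer as your primary route for an intermediate $p_0\in(1,\infty)$ does not work as stated: ``Riesz-theory continuity of resolvents'' gives no lower bound $\sigma(\ad_{T,p})\supset\sigma(\ad_T)$, and there is no general principle relating the spectrum of a restriction to a densely embedded, differently normed invariant subspace to the spectrum of the original operator. The upper bound $\sigma_\pi(\ad_{T,p})\cup\sigma_\delta(\ad_{T,p})\subset\sigma(T)+\sigma(T)$ is obtained in the paper (Lemma \ref{L:quasi}) from the decomposition $\delta_T|_{\BPH}=\ad_{T,p}\oplus U_{T,p}$ relative to $\BPH=\CO_{C,p}\oplus\SCP$ together with the known Fialkow--Herrero computation of the spectra of $\delta_T|_{\BPH}$; that is an external input to be cited, not a consequence of the inclusions $\CO_{C,1}\subset\CO_{C,p}\subset\CO_{C,\infty}$.

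The more substantive gap is that your ``direct verification'' covers only one direction. Producing approximate eigenvectors in $\CO_{C,p}$ shows $[\sigma(T)+\sigma(T)]\setminus\Xi(2T)\subset\sigma_\pi(\ad_{T,p})$, and Lemma \ref{L:quasi} caps this by $\sigma(T)+\sigma(T)$; but to conclude $\sigma(\ad_{T,p})=\sigma(\ad_T)$ you must also show that the exceptional points $2z$, $z\in\Xi(T)$, are \emph{excluded} from $\sigma_\pi(\ad_{T,p})$. This does not follow from Theorem \ref{T:SpecDeriva}, precisely because $\sigma(\ad_{T,p})\subset\sigma(\ad_T)$ is not automatic. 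The paper does this in Lemma \ref{L:eliminate} via a $3\times3$ block decomposition of $T$ along the one-dimensional Riesz spaces $\cH(z;T)$, $\cH(-z;T)$ and their complement, an argument run simultaneously in the operator norm and in each $\|\cdot\|_p$. Without that step your argument leaves the status of the countably many points of $\Xi(2T)$ undetermined for $1<p<\infty$.
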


%%%%%%%%%%%%%%%%%%%%%%%%%%%%%%%%%%%%%%%%%%
\subsection{Approximate point spectra}
The aim of this subsection is to prove the following theorem, which describes approximate point spectra of
Lie derivations of $\CO_C$ and $\CO_{C,p}(p\in[1,\infty])$.

\begin{theorem}\label{T:app}
Let $T\in\CO_C$ and $p\in[1,\infty]$. Then
$$\sigma_\pi(\ad_{T})=\sigma_\pi(\ad_{T,p})=[\sigma(T)+\sigma(T)]\setminus\Xi(2T).$$

\end{theorem}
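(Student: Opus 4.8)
The plan is to analyze $\ad_T$ through its continuous extension $\delta_T$ to $\BH$ and to exploit the Lie embedding $\varphi: X \mapsto X \oplus (-CX^*C)$ from Proposition~\ref{P:universal}. First I would recall (from the work of Rosenblum, Davis--Rosenthal and Fialkow cited in the introduction) that $\sigma_\pi(\delta_T) = \sigma_\pi(T) - \sigma_\pi(T)$ after a suitable conjugation, or more precisely that $\sigma(\delta_T) = \sigma(T) - \sigma(T)$ and that the various one-sided spectra of $\delta_T$ coincide. The key point connecting $\ad_T$ to $\delta_T$ is that for $T \in \CO_C$ one has $CTC = -T^*$, so $\delta_T$ acting on $\BH$ intertwines with $\ad_T$ in a way that replaces ``$-$'' by ``$+$'': roughly, conjugating $\delta_T$ by the map $X \mapsto XC$ (or using the $C$-symmetry structure) turns the difference $\sigma(T) - \sigma(T)$ into the sum $\sigma(T) + \sigma(T)$. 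I would make this precise by writing, for $X \in \BH$, $\ad_T$-type computations on $\CO_C$ versus $\delta_T$-computations on $\BH$ and tracking how $C$ converts $T^*$ into $-T$.

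Second, I would establish the two inclusions separately. For $\sigma_\pi(\ad_T) \supseteq [\sigma(T)+\sigma(T)] \setminus \Xi(2T)$: given $\lambda + \mu \in \sigma(T)+\sigma(T)$ not in $\Xi(2T)$, I would produce approximate eigenvectors for $\ad_T$ inside $\CO_C$. If $\lambda, \mu$ can be realized as approximate eigenvalues via vectors $x_n, y_n$, then rank-one-type operators built symmetrically with respect to $C$ — something like $x_n \otimes Cy_n + y_n \otimes Cx_n$ appropriately normalized so as to land in $\CO_C$ (using $CTC=-T^*$) — should serve as approximate eigenvectors for $\ad_T$ with eigenvalue $\lambda+\mu$. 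The excluded set $\Xi(2T)$ is precisely where this symmetric construction degenerates: when $2z = z_1 + z_2$ forces $z_1 = z_2 = z$, the natural ``antisymmetrization'' of the rank-one building block vanishes, and condition (a) $\dim \cH(z;T)=1$ together with (b) and (c) prevents rescuing it. For the reverse inclusion $\sigma_\pi(\ad_T) \subseteq [\sigma(T)+\sigma(T)] \setminus \Xi(2T)$: since $\ad_T$ is the restriction of $\delta_{-T}$ (or a $C$-twist thereof) to the invariant subspace $\CO_C \subset \BH$, any approximate eigenvalue of $\ad_T$ is an approximate eigenvalue of $\delta_T$, giving containment in $\sigma(T)+\sigma(T)$; then I must separately check that no point of $\Xi(2T)$ is an approximate point eigenvalue of $\ad_T$, which is a local spectral argument near the isolated point $z$ using the one-dimensionality of $\cH(z;T)$ and the structure of $\CO_C$.

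Third, to handle $\ad_{T,p}$ and get $\sigma_\pi(\ad_{T,p}) = \sigma_\pi(\ad_T)$ for all $p \in [1,\infty]$, I would use the dual relations from Proposition~\ref{P:dual}. The containment $\sigma_\pi(\ad_{T,p}) \supseteq \sigma_\pi(\ad_T)$ should follow by noting that the approximate eigenvectors constructed above are finite-rank, hence lie in every $\CO_{C,p}$, and their $\|\cdot\|_p$-normalized versions still witness the same approximate eigenvalue (here one is careful that $\ad_{T,p}$ is bounded on $(\CO_{C,p},\|\cdot\|_p)$, which is given). For the reverse, I would pass to adjoints: by Proposition~\ref{P:dual}, $\ad_{T,p}$ on $\CO_{C,p}$ is (up to the identifications) the pre-adjoint or adjoint of $\ad_{T,q}$ or $\ad_T$ on the dual space, and $\sigma_\pi$ of an operator relates to $\sigma_\delta$ of its adjoint; combined with part (i) of Theorem~\ref{T:SpecDeriva} (which says all these spectral parts coincide), the chain of dualities forces equality across all $p$.

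The main obstacle I expect is the fine analysis at the excluded points $\Xi(2T)$ — proving that points $2z$ with $z \in \Xi(T)$ are genuinely \emph{not} in $\sigma_\pi(\ad_T)$. This requires understanding the local behavior of $\ad_T$ near such a $z$: one must decompose $\cH$ via the Riesz idempotent $E(z;T)$, use $\dim\cH(z;T)=1$ to control the ``resonant'' part, and show that on $\CO_C$ the diagonal resonance at $2z$ cannot occur because the corresponding rank-one direction is not $C$-skew-symmetric (it is $C$-symmetric, landing in $\SC$ rather than $\CO_C$), while conditions (b)--(d) rule out approximate resonances coming from nearby spectrum or from splittings $2z = z_1 + z_2$ with $z_1 \neq z_2$. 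Matching this against the known description of $\iso \sigma(\delta_T)$ from Fialkow's work, and verifying that exactly the points of $\Xi(2T)$ drop out (no more, no fewer), is the delicate combinatorial-spectral heart of the argument.
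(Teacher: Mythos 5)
Your overall strategy does coincide with the paper's: the lower bound comes from $C$-antisymmetrized rank-one blocks, the upper bound from viewing $\ad_T$ as the restriction of $\delta_T$ to the complemented invariant subspace $\CO_C$ (with complement $\SC$) combined with the symmetry $\sigma(T)=-\sigma(T)$, and the exclusion of $\Xi(2T)$ from a local Riesz decomposition at the one-dimensional spectral subspaces $\cH(\pm z;T)$, where the resonant direction $e\otimes(Ce)$ is $C$-symmetric rather than skew-symmetric. Two concrete points need repair, though. First, the building block must be the \emph{anti}symmetrization $X_n=e_n\otimes(Cf_n)-f_n\otimes(Ce_n)$; the operator $e_n\otimes(Cf_n)+f_n\otimes(Ce_n)$ that you wrote satisfies $CXC=X^*$ and lies in $\SC$, not in $\CO_C$. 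Even with the correct sign one must bound $\|X_n\|\geq 1-|\la f_n,e_n\ra|^2$ away from zero when $\lambda\neq\mu$ (Lemma \ref{L:NormEsit} and the Claim in Proposition \ref{P:key}), and the diagonal points $2z$ with $z\in\sigma_\pi(T)\setminus\Xi(T)$ require a genuine case analysis (Proposition \ref{L:key}): interior points of $\sigma(T)$ are reached through the planar fact $\Gamma+\Gamma=\partial\Gamma+\partial\Gamma$ of Corollary \ref{C:plane}, isolated points with $\dim\cH(z;T)>1$ or $=\infty$ need Lemmas \ref{L:eigen} and \ref{L:essSpect}, and accumulation points need a limiting argument. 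Your claim that $\Xi(2T)$ is ``precisely where the construction degenerates'' is the right intuition but does not by itself cover these cases.

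The more serious issue is your third paragraph: invoking Theorem \ref{T:SpecDeriva}(i) to close the argument for $\ad_{T,p}$ is circular, because in the paper's logical order Theorem \ref{T:SpecDeriva} is \emph{deduced from} the present theorem via the duality of Proposition \ref{P:dual} and Corollary \ref{C:dual}. The non-circular route is direct: $\delta_T|_{\BPH}$ decomposes as $\ad_{T,p}\oplus U_{T,p}$ along $\BPH=\CO_{C,p}\oplus\SCP$, so $\sigma_\pi(\ad_{T,p})\subset\sigma_\pi(T)-\sigma_\delta(T)=\sigma(T)+\sigma(T)$ by Herrero's computation of $\sigma_\pi(\delta_T|_{\BPH})$ together with Lemma \ref{L:SSOBasic}; the exclusion of $\Xi(2T)$ in Lemma \ref{L:eliminate} is then run simultaneously in the operator norm and in every $\|\cdot\|_p$. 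Likewise the containment $\sigma_\pi(\ad_T)\subseteq\sigma_\pi(\ad_{T,p})$ is not obtained by duality but for free from the construction, since all approximate eigenvectors have rank at most two and hence uniformly comparable $p$-norms and operator norms, which is why the paper proves both equalities at once.
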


For the reader's convenience, we list some elementary facts, which will be used frequently.

For $e,f\in\cH$, we let $e\otimes f$ denote the operator on $\cH$ defined by $(e\otimes f) (x)=\la x,f\ra e$ for $x\in\cH$.

\begin{lemma}\label{L:basic}
Let $e,f\in\cH$ and $X=e\otimes f$. If $A\in\BH$ and $C$ is a conjugation on $\cH$, then
\begin{enumerate}
 \item[(i)] $AX=(Ae)\otimes f$,
 \item[(ii)] $XA=e\otimes(A^*f)$, and
 \item[(iii)] $CXC=(Ce)\otimes(Cf).$
\end{enumerate}
\end{lemma}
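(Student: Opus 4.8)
The plan is to verify all three identities by the most direct route: evaluate each operator on an arbitrary vector $x\in\cH$, using nothing beyond the definition $(e\otimes f)(x)=\la x,f\ra e$ together with the linearity of $A$, the defining property of the adjoint $A^*$, and the (conjugate-linear) algebraic properties of the conjugation $C$.

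For (i) and (ii) I would simply push $A$ (respectively $A^*$) through the formula for $X=e\otimes f$. Since $Xx$ is the scalar multiple $\la x,f\ra e$, linearity of $A$ gives $AXx=\la x,f\ra\,Ae=((Ae)\otimes f)(x)$, which is (i). For (ii), $XAx=\la Ax,f\ra e=\la x,A^*f\ra e=(e\otimes(A^*f))(x)$, where the middle equality is the definition of the adjoint; since $x$ was arbitrary this yields (ii).

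For (iii) the computation has the same shape, but one must respect the conjugate-linearity of $C$. Evaluating, $X(Cx)=\la Cx,f\ra e$, and pulling the scalar out through $C$ introduces a complex conjugate: $CXCx=\overline{\la Cx,f\ra}\,Ce$. I would then rewrite $\overline{\la Cx,f\ra}=\la f,Cx\ra$ and invoke the conjugation identity $\la Cu,Cv\ra=\la v,u\ra$ with $u=Cf$, $v=x$ to get $\la f,Cx\ra=\la x,Cf\ra$, so that $CXCx=\la x,Cf\ra\,Ce=((Ce)\otimes(Cf))(x)$, proving (iii).

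There is essentially no obstacle here: parts (i) and (ii) are immediate from linearity and the definition of the adjoint, and the only point requiring genuine care is the placement of the complex conjugate in (iii), where the antilinearity of $C$ and the identity $\la Cu,Cv\ra=\la v,u\ra$ must be applied in the right order.
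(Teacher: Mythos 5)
Your verification is correct in all three parts, and in particular you handle the only delicate point — the conjugate-linearity of $C$ and the identity $\la Cu,Cv\ra=\la v,u\ra$ in (iii) — properly. The paper states this lemma without proof as an elementary fact, and your direct evaluation on an arbitrary vector is exactly the intended argument.
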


\begin{lemma}\label{L:NormEsit}
Let $e,f\in\cH$ with $\|e\|=\|f\|=1.$ Set $X=e\otimes (Cf)-f\otimes (Ce).$ Then
$X\in\CO_C$ and $1-|\la f,e\ra|^2\le\|X\|\le\|X\|_p\le 2$ for all $p\in [1,\infty].$
\end{lemma}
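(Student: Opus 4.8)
The statement to prove is Lemma~\ref{L:NormEsit}: for unit vectors $e,f\in\cH$, the operator $X=e\otimes(Cf)-f\otimes(Ce)$ lies in $\CO_C$, and $1-|\la f,e\ra|^2\le\|X\|\le\|X\|_p\le 2$ for all $p\in[1,\infty]$. The plan is to split the argument into three parts: (1) membership in $\CO_C$, (2) the upper bounds, and (3) the lower bound. The membership claim should follow by a direct computation using Lemma~\ref{L:basic}: I would compute $CXC$ and $-X^*$ separately. Using Lemma~\ref{L:basic}(iii), $C(e\otimes Cf)C=(Ce)\otimes(C^2f)=(Ce)\otimes f$ and similarly $C(f\otimes Ce)C=(Cf)\otimes e$, so $CXC=(Ce)\otimes f-(Cf)\otimes e$. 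On the other hand, $(e\otimes Cf)^*=(Cf)\otimes e$ and $(f\otimes Ce)^*=(Ce)\otimes f$, so $X^*=(Cf)\otimes e-(Ce)\otimes f=-CXC$, i.e. $CXC=-X^*$, giving $X\in\CO_C$.

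**Upper bounds.** For the inequality $\|X\|\le\|X\|_p\le 2$ I would first note the standard fact that $\|\cdot\|\le\|\cdot\|_p$ for every $p\in[1,\infty]$ (with equality at $p=\infty$), so it suffices to bound $\|X\|_p$. Since $X$ is a sum of two rank-one operators $e\otimes(Cf)$ and $f\otimes(Ce)$, each of which has $p$-norm equal to $\|e\|\,\|Cf\|=1$ (a rank-one operator $u\otimes v$ has all its singular values zero except one equal to $\|u\|\|v\|$, hence $\|u\otimes v\|_p=\|u\|\|v\|$), the triangle inequality for $\|\cdot\|_p$ gives $\|X\|_p\le 1+1=2$. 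This handles the whole chain $\|X\|\le\|X\|_p\le 2$.

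**Lower bound — the main obstacle.** The genuinely substantive estimate is $\|X\|\ge 1-|\la f,e\ra|^2$. Here I would test $X$ against a cleverly chosen unit vector. A natural candidate is to apply $X$ to $Cf$ (or to $Ce$): using Lemma~\ref{L:basic} notation, $X(Cf)=\la Cf,Cf\ra e-\la Cf,Ce\ra f=e-\la e,f\ra f$ (using $\la Cf,Ce\ra=\la e,f\ra$ from the defining property of a conjugation). Then $\|X\|\ge\|X(Cf)\|=\|e-\la e,f\ra f\|$, and $\|e-\la e,f\ra f\|^2=1-2|\la e,f\ra|^2+|\la e,f\ra|^2=1-|\la e,f\ra|^2$. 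Thus $\|X\|\ge\sqrt{1-|\la e,f\ra|^2}\ge 1-|\la e,f\ra|^2$ since $0\le 1-|\la e,f\ra|^2\le 1$ and $\sqrt t\ge t$ on $[0,1]$; and $|\la e,f\ra|=|\la f,e\ra|$. The only delicate point is making sure the conjugation identities $\la Cx,Cy\ra=\la y,x\ra$ and $C^2=I$ are invoked correctly in both the membership computation and in evaluating $X(Cf)$; beyond that the lemma is entirely routine, so I would present it compactly.
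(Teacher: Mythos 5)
Your proposal is correct and follows essentially the same route as the paper: membership in $\CO_C$ and the upper bound are direct computations, and the lower bound comes from testing $X$ against the vector $Cf$. The only (harmless) variation is that the paper bounds $\|X\|$ below by $|\la XCf,e\ra|=1-|\la f,e\ra|^2$ directly, whereas you compute $\|XCf\|=\sqrt{1-|\la e,f\ra|^2}$ and then use $\sqrt{t}\ge t$ on $[0,1]$, which in fact yields a marginally sharper estimate.
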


\begin{proof}
It is easy to check that $CXC=-X^*$ and $\|X\|\le\|X\|_p\le 2$.

On the other hand, compute to see
\begin{align*}
  \|X\| &\ge|\la XCf, e\ra|=|1-\la Cf, Ce\ra\la  f, e\ra|\\
  &= |1-\la e,f\ra\la f,e\ra|=1-|\la f,e\ra|^2.
\end{align*}
That is, $\|X\|\ge1-|\la f,e\ra|^2.$
\end{proof}

\begin{proposition}\label{P:key}
Let $T\in\CO_C$ and $p\in[1,\infty]$. If $\lambda,\mu\in\sigma_\pi(T)$ and $\lambda\ne\mu$, then
$\lambda+\mu \in[\sigma_\pi(\ad_{T,p})\cap\sigma_\pi(\ad_T)].$
\end{proposition}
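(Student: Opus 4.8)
The plan is to build, for $\lambda,\mu\in\sigma_\pi(T)$ with $\lambda\neq\mu$, a sequence of unit-norm skew-symmetric operators $X_n\in\CO_C$ of rank (at most) two that are approximate eigenvectors for $\ad_T$ with eigenvalue $\lambda+\mu$, and to control their $\|\cdot\|_p$-norms from below so the same sequence works for $\ad_{T,p}$. Since $\lambda\in\sigma_\pi(T)$ there are unit vectors $e_n$ with $\|(T-\lambda)e_n\|\to 0$; likewise $\mu\in\sigma_\pi(T)=\sigma_\pi(CT^*C)$ gives unit vectors $f_n$ with $\|(T-\mu)f_n\|\to 0$ — here I use that $CTC=-T^*$ forces $\sigma_\pi(T)=\sigma_\pi(-T^*)=-\overline{\sigma_\pi(-T^*)}^{\,*}$, more precisely $CT^*C=-T$, so $\|(T+\mu)Cf_n\|=\|C(T^*+\mu)f_n\|=\|(T^*+\bar\mu)f_n\|$; to keep things clean I would instead choose $f_n$ to be approximate eigenvectors of $T^*$ for $\bar\mu$ and set $g_n=Cf_n$, so that $\|(T-\mu)g_n\|=\|C(T^*-\bar\mu)f_n\|\to 0$. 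Then put
\[
X_n \;=\; e_n\otimes(Cg_n)\;-\;g_n\otimes(Ce_n),
\]
which by Lemma~\ref{L:NormEsit} lies in $\CO_C$ with $\|X_n\|_p\le 2$ for all $p$.

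The key computation is $\ad_T(X_n)-(\lambda+\mu)X_n$. Using Lemma~\ref{L:basic}(i)(ii), $TX_n = (Te_n)\otimes(Cg_n) - (Tg_n)\otimes(Ce_n)$ and $X_nT = e_n\otimes(T^*Cg_n) - g_n\otimes(T^*Ce_n)$. Now $T^*Ce_n = C(CT^*C)Ce_n\cdot$—wait, more directly $CT^*C=-T$ gives $T^*C = -CT$, so $T^*Ce_n=-C(Te_n)$ and $T^*Cg_n=-C(Tg_n)$. Hence
\[
\ad_T(X_n)=(Te_n)\otimes(Cg_n)-(Tg_n)\otimes(Ce_n)+e_n\otimes(CTg_n)-g_n\otimes(CTe_n).
\]
Subtracting $(\lambda+\mu)X_n$ and regrouping, each of the four terms becomes a rank-one operator of the form $((T-\lambda)e_n)\otimes(\cdot)$, $((T-\mu)g_n)\otimes(\cdot)$, $(\cdot)\otimes(C(T-\mu)g_n)$ or $(\cdot)\otimes(C(T-\lambda)e_n)$ (after splitting $\lambda+\mu = \lambda+\mu$ appropriately across the two tensor legs), each with one leg of norm $\le 1$ and the other tending to $0$; so $\|\ad_T(X_n)-(\lambda+\mu)X_n\|\to 0$, and in fact the $\|\cdot\|_p$-norm of this rank-$\le 4$ difference is at most $4^{1/p}$ times its operator norm (or just $4$ times, crudely), hence also $\to 0$.

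The main obstacle is the lower bound: to conclude $\lambda+\mu\in\sigma_\pi(\ad_{T,p})$ (and $\sigma_\pi(\ad_T)$) I need $\|X_n\|_p$ bounded away from $0$. Lemma~\ref{L:NormEsit} gives $\|X_n\|\ge 1-|\la g_n,e_n\ra|^2$, which degenerates only if $|\la g_n,e_n\ra|\to 1$. I would rule this out: if $|\la g_n,e_n\ra|\to1$, then $e_n$ and $g_n$ are asymptotically parallel, which (after replacing $g_n$ by a unimodular multiple) forces $\|(T-\lambda)e_n\|,\|(T-\mu)e_n\|\to 0$ simultaneously, whence $\|(\lambda-\mu)e_n\|\to 0$, contradicting $\lambda\neq\mu$ and $\|e_n\|=1$. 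So along a subsequence $\inf_n\|X_n\|>0$, and since $\|X_n\|\le\|X_n\|_p$ we get $\inf_n\|X_n\|_p>0$ as well. Dividing by $\|X_n\|_p$ then exhibits unit vectors in $(\CO_{C,p},\|\cdot\|_p)$ on which $\ad_{T,p}-(\lambda+\mu)$ is not bounded below; the same normalized sequence, measured in operator norm, handles $\ad_T$. This gives $\lambda+\mu\in\sigma_\pi(\ad_{T,p})\cap\sigma_\pi(\ad_T)$, completing the proof.
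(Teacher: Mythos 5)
Your proposal is essentially the paper's own proof: the same test operators $X_n=e_n\otimes(Cf_n)-f_n\otimes(Ce_n)$, the same expansion of $\ad_T(X_n)-(\lambda+\mu)X_n$ into four rank-one terms each with one leg tending to zero (hence convergence to $0$ in every $\|\cdot\|_p$), and the same contradiction argument using the lower bound $\|X_n\|\ge 1-|\la f_n,e_n\ra|^2$ from Lemma \ref{L:NormEsit} together with $\lambda\ne\mu$ to keep $\|X_n\|_p$ bounded away from zero. One caution: your ``cleaner'' detour has a sign slip --- from $TC=-CT^*$ one gets $\|(T-\mu)Cf_n\|=\|(T^*+\bar\mu)f_n\|$, so taking $f_n$ to be approximate eigenvectors of $T^*$ at $\bar\mu$ would \emph{not} make $\|(T-\mu)Cf_n\|\to 0$ unless $\mu=0$; but the detour is unnecessary, since (as you note first) $\mu\in\sigma_\pi(T)$ directly supplies unit vectors $f_n$ with $\|(T-\mu)f_n\|\to 0$, which is exactly what the paper uses.
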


\begin{proof}
Since $\lambda,\mu\in\sigma_\pi(T)$, there exist unit vectors $\{e_n, f_n: n\ge1\}$ such that $(T-\lambda)e_n\rightarrow 0$ and $(T-\mu)f_n\rightarrow 0$ as $n\rightarrow\infty.$ For $n\ge 1$, set $X_n=e_n\otimes (Cf_n)-f_n\otimes (Ce_n)$. By Lemma \ref{L:NormEsit}, $X_n\in\CO_C$ for $n\geq 1$.

{\it Claim.} $\liminf_{n}\|X_n\|> 0.$

Indeed, if not, then there exists a subsequence $\{X_{n_k}\}_{k\ge1}$ of $\{X_n\}_{n\ge 1}$ such that $\lim_{k}\|X_{n_k}\|=0$. By Lemma  \ref{L:NormEsit}, we get $$\|X_{n_k}\|\ge 1- |\la f_{n_k}, e_{n_k}\ra|^2\ge 0.$$Then $\lim_{k}|\la f_{n_k}, e_{n_k}\ra|=1.$  For $k\geq 1$, assume $f_{n_k}=\la f_{n_k}, e_{n_k}\ra e_{n_k}+g_{k}$ for some $g_{k}\in \{e_{n_k}\}^\bot$. Clearly, $g_k\rightarrow0$ as $k\rightarrow\infty$. It is easy to check that
\begin{align*}
(T-\lambda)f_{n_k}=\la f_{n_k}, e_{n_k}\ra(T-\lambda)e_{n_k}+(T-\lambda)g_k\rightarrow 0.
\end{align*}
Then, as $k\rightarrow\infty$, \[|\lambda-\mu|=\|(T-\lambda)f_{n_k}-(T-\mu)f_{n_k}\| \rightarrow 0.\]
This implies $\lambda=\mu$, a contradiction.

Therefore we have proved that $\liminf_{n}\|X_n\|> 0.$   Without loss of generality, we assume that $\inf_{n\geq 1}\|X_n\|> 0$. Thus
 \begin{align}\label{E20}
 2\geq \sup_{n\geq 1}\|X_n\|_p\geq\inf_{n\geq 1}\|X_n\|_p\geq\inf_{n\geq 1}\|X_n\|> 0.
 \end{align}

Compute to see
\begin{align*}
\ad_T(X_n)&=TX_n-X_nT\\
&=(Te_n)\otimes (Cf_n)-(Tf_n)\otimes(Ce_n)\notag\\
&\quad-e_n\otimes(T^*Cf_n)+f_n\otimes(T^*Ce_n)\\
&=(Te_n)\otimes (Cf_n)-(Tf_n)\otimes(Ce_n)\\
&\quad+e_n\otimes(CTf_n)-f_n\otimes(CTe_n).
\end{align*}
Note that
\begin{align*}
(\lambda+\mu)X_n&=(\lambda e_n)\otimes (Cf_n)-(\lambda f_n)\otimes (Ce_n)\\
&\quad+(\mu e_n)\otimes (Cf_n)-(\mu f_n)\otimes(Ce_n).
\end{align*}
%\[\lambda X_n=(\lambda e_n)\otimes f_n-(Cf_n)\otimes[C(\lambda e_n)],\]and
%\[\mu X_n= e_n\otimes (\bar{\mu}f_n)-[C(\bar{\mu}f_n)]\otimes (Ce_n).\]
Then, as $n\rightarrow \infty$,
\begin{align*}
\ad_T(X_n)-(\lambda+\mu)X_n&=[(T-\lambda)e_n]\otimes (Cf_n) -[(T-\mu)f_n]\otimes(Ce_n)\\
&\quad+e_n\otimes[C(T-\mu)f_n]-f_n\otimes[C(T-\lambda)e_n]\stackrel{\|\cdot\|_p}{\longrightarrow}0.
\end{align*}
In view of (\ref{E20}), we obtain $\lambda+\mu\in[\sigma_\pi(\ad_{T,p})\cap\sigma_\pi(\ad_T)].$
\end{proof}

\begin{lemma}\label{L:eigen}
Let $T\in\CO_C$ and $p\in[1,\infty]$.
 If $z\in\bC$ and $\dim\ker(T-z)^2\geq 2$, then
$2z\in[\sigma_\pi(\ad_{T,p})\cap\sigma_\pi(\ad_T)]$.
\end{lemma}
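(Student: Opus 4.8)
The plan is to produce, for each $z\in\bC$ with $\dim\ker(T-z)^2\geq 2$, a bounded sequence of unit-norm (in $\|\cdot\|_p$, bounded below in $\|\cdot\|$) elements $X_n\in\CO_{C,p}$ with $\|(\ad_T-2z)X_n\|_p\to 0$, in fact one can hope to get exact eigenvectors since we are now dealing with the point spectrum rather than the approximate point spectrum. First I would reduce to $z=0$: replace $T$ by $T-z$, which stays in $\CO_C$ only if $2z=0$, so instead I keep $T$ but just work directly with the generalized kernel $\cN=\ker(T-z)^2$, noting $\dim\cN\geq 2$. Inside $\cN$ the operator $T$ acts as $z$ plus a nilpotent of order $\leq 2$; since $\dim\cN\geq2$, either $T-z$ vanishes on a $2$-dimensional subspace of $\cN$, or there is a vector $e$ with $(T-z)e\neq 0$ but $(T-z)^2e=0$, i.e.\ a Jordan chain $\{e,(T-z)e\}$.

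The key construction mirrors Lemma~\ref{L:NormEsit} and Proposition~\ref{P:key}: given two linearly independent vectors $u,v\in\cN$, form the rank-$\leq2$ operator $X=u\otimes(Cv)-v\otimes(Cu)$, which by Lemma~\ref{L:basic}(iii) lies in $\CO_C$ (indeed in every Schatten class). Using Lemma~\ref{L:basic}(i)--(ii), $\ad_T(X)=(Tu)\otimes(Cv)-(Tv)\otimes(Cu)+u\otimes(CTv)-v\otimes(CTu)$, recalling $T^*C=CT$ since $T\in\CO_C$. I would then choose $u,v$ adapted to the two cases. In the first case, pick orthonormal $u,v\in\ker(T-z)$; then every term above collapses and $\ad_T(X)=2zX$ exactly, so $2z$ is an eigenvalue of both $\ad_T$ and $\ad_{T,p}$, with $X\neq 0$ by the norm estimate in Lemma~\ref{L:NormEsit} (here $\la u,v\ra=0$ so $\|X\|\geq 1$). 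In the second case, with a Jordan chain put $f=(T-z)e\neq 0$, $(T-z)f=0$; take $u=e$, $v=f$ (linearly independent since $f\neq0$, $e\notin\ker(T-z)$). A short computation gives $\ad_T(X)-2zX = f\otimes(Cf) + (\text{terms that cancel})$ — I expect the surviving piece to be a fixed rank-one operator, and then iterating $\ad_T-2z$ once more annihilates it, so $2z\in\sigma(\ad_T)$ because $(\ad_T-2z)$ is not injective on the span of $X$ and this rank-one operator; alternatively one scales to get an approximate-eigenvector sequence directly. To land inside $\sigma_\pi$ rather than merely $\sigma$, I would, if necessary, combine with a sequence: set $X_n=n\big(u\otimes(Cv)-v\otimes(Cu)\big)$ is the wrong normalization, so instead renormalize so that $\|X_n\|_p=1$ and note $(\ad_T-2z)X_n$ has norm bounded by a constant over the scaling; more cleanly, since in case~2 the operator $\ad_T-2z$ maps the two-dimensional space $\mathrm{span}\{X,\,f\otimes(Cf)\}$ into its one-dimensional subspace $\mathbb{C}\,(f\otimes(Cf))$, it is not bounded below there, giving $2z\in\sigma_\pi$.

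The main obstacle I anticipate is the bookkeeping in case~2: verifying that $\ad_T(X)-2zX$ is exactly a scalar multiple of $f\otimes(Cf)$ and not something with a residual $e\otimes(Cf)$ or $f\otimes(Ce)$ component requires using $(T-z)e=f$ and $(T-z)f=0$ carefully in all four terms of the expansion of $\ad_T(X)$, together with $CT=T^*C$; and one must check $X\neq 0$ and that $X$ and $f\otimes(Cf)$ are linearly independent, which could fail only in a degenerate configuration that I would rule out using $C$-conjugate-linearity and the fact that $\{e,f\}$ is linearly independent. The parameter $p$ plays no real role since all the operators involved have rank $\leq 2$, so every Schatten norm is comparable to the operator norm on them, and the same $X$ (or $X_n$) works simultaneously for $\ad_T$ and $\ad_{T,p}$; I would state this comparison explicitly to get the intersection $\sigma_\pi(\ad_{T,p})\cap\sigma_\pi(\ad_T)$ as claimed.
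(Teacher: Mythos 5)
Your construction is exactly the paper's: split into the case $\dim\ker(T-z)\geq 2$ (two genuine eigenvectors) and the Jordan-chain case, and in each case form $X=u\otimes(Cv)-v\otimes(Cu)$ from two independent vectors in $\ker(T-z)^2$. Case 1 is handled identically. In Case 2 your anticipated residual does not survive: writing $f=(T-z)e$, one finds $TX=zX+f\otimes(Cf)$ and $XT=-zX+f\otimes(Cf)$, so the rank-one term appears with the \emph{same} coefficient on both sides and cancels in the commutator, giving $\ad_T(X)=2zX$ exactly; thus $2z$ is an eigenvalue of $\ad_T$ and of $\ad_{T,p}$ for every $p$, and no approximate-eigenvector or rank argument is needed. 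This is fortunate, because your fallback is flawed as stated: by Lemma \ref{L:basic}(iii), $C\bigl(f\otimes(Cf)\bigr)C=(Cf)\otimes f=\bigl(f\otimes(Cf)\bigr)^*$, so $f\otimes(Cf)$ is $C$-\emph{symmetric} and does not lie in $\CO_C$; hence $\mathrm{span}\{X,\,f\otimes(Cf)\}$ is not a subspace of the domain of $\ad_T$, and the ``two-dimensional space mapped into a one-dimensional one'' argument cannot be run inside $\CO_C$. (In fact this very observation rescues you without redoing the computation: $\ad_T(X)-2zX$ must lie in $\CO_C$, while your bookkeeping shows it is a scalar multiple of the nonzero $C$-symmetric operator $f\otimes(Cf)$; since $\CO_C\cap\SC=\{0\}$, that scalar is forced to be $0$.) Your remarks on $X\neq 0$ and on the irrelevance of $p$ for finite-rank $X$ are correct and match the paper.
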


\begin{proof}
The proof is divided into two cases.

{\it Case 1.}  $\dim\ker(T-z)\geq 2$.

In this case, we can find unit vectors $e_1,e_2\in\ker(T-z)$ with $\la e_1,e_2\ra=0$.
Set $X=e_1\otimes(Ce_2)-e_2\otimes(Ce_1)$. Then $X$ is a nonzero finite-rank operator in $\CO_C$.
Compute to see
\begin{align*}
  \ad_T(X)&=TX-XT\\
  &=(Te_1)\otimes(Ce_2)-(Te_2)\otimes(Ce_1)\\
  &\quad -e_1\otimes(T^*Ce_2)+e_2\otimes(T^*Ce_1)\\
  &= ze_1 \otimes(Ce_2)-ze_2\otimes(Ce_1)\\
  &\quad +e_1\otimes(CTe_2)-e_2\otimes(CTe_1)\\
  &= ze_1 \otimes(Ce_2)-ze_2\otimes(Ce_1)\\
  &\quad +ze_1\otimes(Ce_2)-ze_2\otimes(Ce_1)\\
  &=2zX.
\end{align*} That is, $\ad_T(X)=2zX$. So
 $2z\in[\sigma_\pi(\ad_{T,p})\cap\sigma_\pi(\ad_T)]$.

{\it Case 2.}  $\dim\ker(T-z)=1$.

This implies that $\dim\ker(T-z)^2=2$. We can choose nonzero vectors $f_1,f_2\in\cH$ such that $(T-z)f_1=0$ and $(T-z)f_2=f_1$.

Set $X=f_1\otimes(Cf_2)-f_2\otimes(Cf_1)$. Then $X$ is a nonzero finite-rank operator in $\CO_C$.
Compute to see
\begin{align*}
TX&=(Tf_1)\otimes(Cf_2)-(Tf_2)\otimes(Cf_1)\\
&=zf_1\otimes(Cf_2)-(f_1+zf_2)\otimes(Cf_1)\\
&= z[f_1\otimes(Cf_2)- f_2 \otimes(Cf_1)]-f_1\otimes(Cf_1)\\
&= zX-f_1\otimes(Cf_1)
\end{align*} and
\begin{align*}
X T&= f_1\otimes(T^*Cf_2)-f_2\otimes(T^*Cf_1)\\
&= -f_1\otimes(CTf_2)+f_2\otimes(CTf_1)\\
&= -f_1\otimes[C(f_1+zf_2)]+f_2\otimes[C(zf_1)]\\
&=-z[f_1\otimes(C f_2)-f_2\otimes(Cf_1)]-f_1\otimes(C f_1)\\
&=-zX-f_1\otimes(C f_1).
\end{align*}
Thus $\ad_T(X)=TX-XT=2zX$. So
 $2z\in[\sigma_\pi(\ad_{T,p})\cap\sigma_\pi(\ad_T)]$.
\end{proof}

Let $\pi:\BH\rightarrow\BH/\KH$
be the natural map from $\BH$ into the Calkin algebra.
For $T\in\BH$, the left and right essential
spectrum of $T$ are defined by $\sigma_{le}(T)=\sigma_{l}(\pi(T))$ and $\sigma_{re}(T)=\sigma_{r}(\pi(T))$,
respectively.

\begin{lemma}\label{L:essSpect}
Let $T\in\CO_C$ and $p\in[1,\infty]$. If $z\in\iso\sigma(T)$ and $\dim\cH(z;T)=\infty$, then
$2z\in[\sigma_\pi(\ad_{T,p})\cap\sigma_\pi(\ad_T)]$.
\end{lemma}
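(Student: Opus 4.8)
The plan is to reduce the statement $2z\in[\sigma_\pi(\ad_{T,p})\cap\sigma_\pi(\ad_T)]$ to the construction of a sequence $\{X_n\}\subset\CO_{C,p}$ with $\inf_n\|X_n\|>0$, $\sup_n\|X_n\|_p<\infty$, and $\|\ad_T(X_n)-2zX_n\|_p\to 0$. Since $z$ is an isolated point of $\sigma(T)$ with $\dim\cH(z;T)=\infty$, the Riesz decomposition $\cH=\cH(z;T)\oplus\cH'$ gives $T=T_1\oplus T_2$ with $\sigma(T_1)=\{z\}$ and $z\notin\sigma(T_2)$; in particular $T_1-z$ is quasinilpotent on an infinite-dimensional space. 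The first step is to use this to produce, for each $n$, a unit vector $u_n\in\cH(z;T)$ with $\|(T-z)u_n\|<1/n$: this is standard since a quasinilpotent operator on an infinite-dimensional space has $0$ in its approximate point spectrum, and indeed one can extract \emph{two} orthonormal such vectors $e_n,f_n$ for each $n$ (the space $\cH(z;T)$ being infinite-dimensional, one can work in an infinite-dimensional subspace on which $T_1-z$ is still arbitrarily small in norm on the unit sphere, e.g. by passing to a subsequence of an approximate eigenvector sequence and using Gram--Schmidt; alternatively, restrict to a large-dimensional invariant or almost-invariant subspace).

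With $e_n,f_n$ orthonormal and $(T-z)e_n\to 0$, $(T-z)f_n\to 0$, the second step mimics the computation in Proposition~\ref{P:key} and Lemma~\ref{L:eigen}: set $X_n=e_n\otimes(Cf_n)-f_n\otimes(Ce_n)$. By Lemma~\ref{L:NormEsit}, $X_n\in\CO_C$ and, since $\la f_n,e_n\ra=0$, we get $\|X_n\|\ge 1$ and $\|X_n\|_p\le 2$, so the norm bounds (\ref{E20})-type estimates hold automatically. Then, using $T^*C=CT$ and Lemma~\ref{L:basic}, one computes
\[
\ad_T(X_n)-2zX_n=[(T-z)e_n]\otimes(Cf_n)-[(T-z)f_n]\otimes(Ce_n)+e_n\otimes[C(T-z)f_n]-f_n\otimes[C(T-z)e_n],
\]
and each of the four rank-one summands has $\|\cdot\|_p$ bounded by $\|(T-z)e_n\|$ or $\|(T-z)f_n\|$, hence $\|\ad_T(X_n)-2zX_n\|_p\to 0$. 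Since $X_n$ has rank two, it lies in $\CO_{C,p}$ for every $p\in[1,\infty]$, and we conclude $2z\in\sigma_\pi(\ad_{T,p})$; taking $p=\infty$ gives $2z\in\sigma_\pi(\ad_T)$, which is the claim.

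The main obstacle is the very first step: extracting \emph{orthonormal} approximate eigenvectors $e_n,f_n$ for $T$ at the eigenvalue $z$ when $z$ need not be an eigenvalue at all but only an isolated point of the spectrum with infinite-dimensional Riesz space. The resolution is to move everything into $\cH(z;T)$: on that space $S:=(T-z)|_{\cH(z;T)}$ is quasinilpotent and acts on an infinite-dimensional Hilbert space, so $0\in\sigma_\pi(S)$, and moreover for any $N$ one can find an $N$-dimensional subspace $\cM_N$ of $\cH(z;T)$ on which $S$ has arbitrarily small norm — e.g. by choosing an approximate eigenvector $v$ with $\|Sv\|$ tiny, noting $S$ restricted to a generic large subspace stays small, or invoking that quasinilpotents are norm-limits of nilpotents together with a dimension count. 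From a $2$-dimensional such subspace one reads off the desired orthonormal pair. Once this technical point is handled the rest is a routine rank-two operator computation identical in spirit to the preceding lemmas, so I would state the subspace extraction as a short preliminary observation and then carry out the $X_n$ estimate exactly as above.
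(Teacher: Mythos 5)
Your second step---the rank-two operators $X_n=e_n\otimes(Cf_n)-f_n\otimes(Ce_n)$, the lower bound $\|X_n\|\ge 1$ from orthonormality via Lemma~\ref{L:NormEsit}, and the estimate $\|\ad_T(X_n)-2zX_n\|_p\le 2\|(T-z)e_n\|+2\|(T-z)f_n\|\to 0$---is exactly the paper's computation (the paper uses consecutive terms $f_n,f_{n+1}$ of a single orthonormal sequence rather than separate pairs, which is immaterial). The problem is the first step, which you correctly flag as the crux but do not actually resolve. The fact you need---for every $n$ there exist \emph{orthonormal} vectors $e_n,f_n$ in $\cH(z;T)$ with $\|(T-z)e_n\|,\|(T-z)f_n\|<1/n$---is true, but none of the justifications you offer establishes it. That $0\in\sigma_\pi(S)$ for $S=(T-z)|_{\cH(z;T)}$ only yields a sequence of \emph{unit} approximate kernel vectors; such a sequence may converge in norm to a single eigenvector, in which case Gram--Schmidt applied to a subsequence produces vectors on which $S$ need not be small, so ``pass to a subsequence and apply Gram--Schmidt'' fails as stated. ``$S$ restricted to a generic large subspace stays small'' is not a theorem, and ``quasinilpotents are norm-limits of nilpotents'' is a deep result whose application here (the ``dimension count'') you do not carry out.

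The short, correct repair is the one the paper uses. Since $\dim\cH(z;T)=\infty$, the essential spectrum of $S$ is a nonempty subset of $\sigma(S)=\{0\}$, so $\sigma_e(S)=\{0\}$ and $0\in\partial\sigma_e(S)\subset\sigma_{le}(S)$; equivalently $z\in\sigma_{le}(T)\cap\sigma_{re}(T)$ (this is \cite[Prop.\ XI.6.9]{ConwayFA}). It is membership in the \emph{left essential} spectrum, not merely the approximate point spectrum, that is equivalent to the existence of an orthonormal sequence $\{f_n\}$ with $(T-z)f_n\to 0$; the paper extracts such a sequence from \cite[Thm.\ 3.49]{Herr89}, which produces a compact $K$ and an orthonormal $\{f_n\}$ with $(T+K)f_n=zf_n$, whence $(T-z)f_n=-Kf_n\to 0$. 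With that substitution in your preliminary observation, the rest of your argument goes through verbatim.
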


\begin{proof}
By \cite[Prop. XI.6.9]{ConwayFA}, we have $z\in\sigma_{le}(T)\cap\sigma_{re}(T)$.
By \cite[Theorem 3.49]{Herr89}, there exists a compact $K\in\BH$ and an orthonormal sequence $\{f_n\}_{n\geq 1}$ such that $(T+K)f_n=zf_n$ for $n\geq 1$.
Then $(T-z)f_n\rightarrow 0$ as $n\rightarrow\infty$.
For $n\geq 1$, set $X_n=f_n\otimes( Cf_{n+1})-f_{n+1}\otimes(Cf_n)$.
Clearly, $X_n\in\CO_C$ is of finite rank and
$$2\geq \|X_n\|_p\geq \|X_n\|\geq \| X_n Cf_n\|=1.$$

Compte to see \begin{align*}
\ad_T(X_n)&=TX_n-X_nT\\
&=(Tf_n)\otimes (Cf_{n+1})-(Tf_{n+1})\otimes(Cf_n)\notag\\
&\qquad-f_n\otimes(T^*Cf_{n+1})+f_{n+1}\otimes(T^*Cf_n)\\
&=(Tf_n)\otimes (Cf_{n+1})-(Tf_{n+1})\otimes(Cf_n)\notag\\
&\qquad+f_n\otimes(C Tf_{n+1})-f_{n+1}\otimes(C Tf_n).
\end{align*}
Thus
\begin{align*}
\ad_T(X_n)-2zX_n&= [(T-z)f_n]\otimes (Cf_{n+1})-[(T-z)f_{n+1}]\otimes(Cf_n)\notag\\
&\qquad+f_n\otimes[C (T-z)f_{n+1}]-f_{n+1}\otimes[C (T-z)f_n].
\end{align*} tends to $0$ as $n\rightarrow\infty$. So $2z\in[\sigma_\pi(\ad_{T,p})\cap\sigma_\pi(\ad_T)]$.
\end{proof}
%
%For $T\in\BH$, we denote by
%$\Xi_0(T)$ the set of all isolated points $z$ of $\sigma(T)$ satisfying
%\begin{enumerate}
%\item[(i)] $\dim\cH(z;T)=1$, and
%\item[(ii)] $z$ lies in the closure of the unbounded components of $\bC\setminus\sigma(T).$
%\end{enumerate}

To discuss points in $\Xi(T)$, we need a key lemma.

\begin{lemma}\label{L:plane}
If $\Omega$ is a nonempty, bounded, connected, open subset of $\bC$, then
$(\Omega+\Omega)\subset(\partial\Omega+\partial\Omega)$.
\end{lemma}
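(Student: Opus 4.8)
The plan is to establish the set inclusion $(\Omega+\Omega)\subset(\partial\Omega+\partial\Omega)$ by a direct geometric argument: take an arbitrary point $w = a+b$ with $a,b\in\Omega$, and deform the pair $(a,b)$ within $\Omega\times\Omega$, keeping the sum fixed at $w$, until both coordinates land on $\partial\Omega$. Concretely, fix a direction $v\in\bC$ with $|v|=1$ and consider the line segment $t\mapsto(a+tv,\, b-tv)$; this has constant sum $w$. Since $\Omega$ is bounded, both $a+tv$ and $b-tv$ must leave $\Omega$ for large $|t|$, so as $t$ increases from $0$ there is a first time $t_+\ge 0$ at which one of the two coordinates, say $a+tv$, hits $\partial\Omega$ (using that $\Omega$ is open, so the coordinate stays in $\Omega$ on a half-open interval and the boundary is reached in finite time). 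At that moment we have one coordinate on $\partial\Omega$ and the other, $b - t_+ v$, somewhere in $\overline{\Omega}$.

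The remaining issue is that the second coordinate need only be in $\overline{\Omega}$, not on $\partial\Omega$. To fix this, once $a' := a+t_+v \in \partial\Omega$ is pinned, I would move only the second coordinate: from the pair $(a', b-t_+v)$ we cannot change $a'$ without changing the sum, so instead I revisit the choice. The cleaner route: first move $b$ to the boundary. Choose any direction $u$; by boundedness there is a first $s_+\ge 0$ with $b + s_+ u\in\partial\Omega$; replace $b$ by $b' := b+s_+u$ and $a$ by $a' := a - s_+ u$, still with $a'+b' = w$ and $b'\in\partial\Omega$. If $a'\in\overline\Omega$ is actually on $\partial\Omega$ we are done. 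If $a'\in\Omega$, then since $\Omega$ is open and connected, and $b'\in\partial\Omega$, I argue as follows: the point $a'$ lies in $\Omega$, so a small ball $B(a',\epsilon)\subset\Omega$; then $w - B(a',\epsilon) = B(b',\epsilon)$ meets $\Omega$ (as $b'\in\partial\Omega$, every neighborhood of $b'$ meets $\Omega$), giving a point $a''\in\Omega$ near $a'$ with $b'' := w-a''\in\Omega$ — but this just shows $w\in\Omega+\Omega$ again, which we knew. So the genuinely careful step is to run the segment argument \emph{symmetrically}: along $t\mapsto(a+tv, b-tv)$, let $t_0 = \sup\{t\ge 0 : a+sv\in\Omega \text{ and } b-sv\in\Omega \text{ for all } s\in[0,t]\}$; at $t=t_0$, at least one coordinate is on $\partial\Omega$. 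Then flip: among the four rays $\pm v$ from the current configuration, one of them drives the \emph{interior} coordinate to its boundary while the other coordinate, already on $\partial\Omega$... here is where connectedness and openness of $\Omega$ must be used to guarantee the boundary coordinate does not re-enter in a way that spoils the count.

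Given the subtlety, the robust argument I would actually commit to paper is a compactness/extremal one. Consider the set $K = \{(x,y)\in\overline\Omega\times\overline\Omega : x+y = w\}$; it is nonempty (contains $(a,b)$), compact, and convex as a subset of the affine line $\{x+y=w\}\cong\bC$. On $K$, consider the continuous function $(x,y)\mapsto \mathrm{dist}(x,\partial\Omega) + \mathrm{dist}(y,\partial\Omega)$, or more simply a linear functional $\ell$ on the line $\{x+y=w\}$; its maximum over the compact convex set $K$ is attained at an extreme point $(x_0,y_0)$ of $K$. An extreme point of $K$ cannot have $x_0\in\Omega$: if $x_0\in\Omega$ then $y_0 = w - x_0\in\overline\Omega$, and a small movement $x_0\pm\delta$ stays in $\Omega$ while $y_0\mp\delta$ stays in $\overline\Omega$ (since $\overline\Omega$ is convex? — no, $\Omega$ need not be convex), so this needs care too.

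Let me instead present the clean segment version as the final plan, which does work: Given $a,b\in\Omega$, pick $v$ with $|v|=1$. Let $\alpha = \sup\{t > 0 : a + sv\in\Omega\ \forall s\in[0,t]\}$ and $\beta = \sup\{t>0 : b - sv \in\Omega\ \forall s\in[0,t]\}$; both are finite and positive by openness and boundedness. Let $\gamma = \min(\alpha,\beta)$; WLOG $\gamma = \alpha$, so $a_1 := a + \gamma v\in\partial\Omega$ and $b_1 := b - \gamma v\in\overline\Omega$. Now run the \emph{opposite} direction on the second coordinate only is impossible; instead, from $(a_1, b_1)$ travel along $t\mapsto(a_1 - t v,\, b_1 + tv)$: the first coordinate immediately re-enters $\Omega$ (boundary point, and moving back along $v$ into where it came from), so let $\delta = \sup\{t > 0 : b_1 + sv\in\Omega\ \forall s\in[0,t]\}\wedge\sup\{t>0: a_1 - sv\in\Omega\ \forall s\in(0,t]\}$. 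If the binding constraint is the second coordinate we reach $b_2\in\partial\Omega$ with $a_2 = a_1 - \delta v\in\overline\Omega$; if moreover $\delta\le\gamma$ then $a_2 = a + (\gamma-\delta)v$ which lies in $\overline\Omega$, and in fact we can choose the parametrization so that $a_2\in\partial\Omega$ as well — but this still does not close automatically. The honest assessment: \textbf{the main obstacle is precisely pinning the second coordinate to $\partial\Omega$ while the first already sits there}, and the correct tool is to note that $\{x\in\overline\Omega : w - x\in\overline\Omega\}$ is a compact subset of $\bC$ whose boundary points $x$ must satisfy $x\in\partial\Omega$ or $w-x\in\partial\Omega$; picking $x$ on the boundary of this compact set in the direction perpendicular to... — I would spell this out by choosing $x_*$ maximizing $\mathrm{Re}(e^{-i\theta}x)$ over the compact set $S = \overline\Omega\cap(w-\overline\Omega)$ for generic $\theta$, deducing $x_*\in\partial\overline\Omega = \partial\Omega$ (else $x_*$ is interior to $\Omega$ and then since $S$ is the intersection, maximality forces $w - x_*\in\partial\Omega$, using that near an interior point of $\Omega$ the constraint defining $S$ is only $w-x\in\overline\Omega$), and then $w - x_*\in\overline\Omega$; if $w-x_*\in\Omega$ repeat the maximization on the complementary constraint. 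Two applications of this extremal selection yield $x_*\in\partial\Omega$ and $w-x_*\in\partial\Omega$, giving $w\in\partial\Omega+\partial\Omega$.
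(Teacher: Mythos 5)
Your proposal does not close. Each route you sketch (the segment deformations, the extremal selection over $S=\overline\Omega\cap(w-\overline\Omega)$) correctly lands \emph{one} of the two coordinates on $\partial\Omega$, and you correctly identify that the entire difficulty is forcing the \emph{second} coordinate onto $\partial\Omega$ simultaneously; but the step you finally commit to --- ``if $w-x_*\in\Omega$ repeat the maximization on the complementary constraint; two applications \dots yield $x_*\in\partial\Omega$ and $w-x_*\in\partial\Omega$'' --- is an assertion, not an argument, and it cannot be repaired in the form you describe because nowhere in the proposal do you use the connectedness of $\Omega$. Connectedness is essential to the lemma. Take $\Omega=B(0,1)\cup B(10,\tfrac12)$, which is nonempty, bounded and open but disconnected: then $10=0+10\in\Omega+\Omega$, yet $10\notin\partial\Omega+\partial\Omega$, since $|z_1+z_2|\le 2$ when both $z_i$ lie on the unit circle, $|z_1+z_2-20|\le 1$ when both lie on the small circle, and $|z_1+z_2-10|=|z_1+(z_2-10)|\ge\tfrac12$ in the mixed case. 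In this example your extremal scheme with $w=10$ gives $S=\overline{B(0,\tfrac12)}\cup\overline{B(10,\tfrac12)}$, and for \emph{every} direction $\theta$ the maximizer of $\mathrm{Re}(e^{-\mathrm{i}\theta}x)$ has exactly one of $x_*$, $w-x_*$ on $\partial\Omega$ and the other strictly inside $\Omega$; no second maximization can fix this, because the conclusion is simply false here. So any argument that, like all of yours, never invokes connectedness must break down at exactly the point you flagged.

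The paper's proof supplies the missing global ingredient. After translating one point to the boundary (as you do), obtaining $w_1\in\partial\Omega$ and $w_2\in\Omega$ with $w_1+w_2=z_1+z_2$, it considers the point reflection $h(z)=w_1+w_2-z$ applied to the \emph{whole} boundary: $h(\partial\Omega)$ meets $\Omega$ because $h(w_1)=w_2\in\Omega$, and it cannot be entirely contained in $\Omega$ because $h$ is a rigid motion, so $h(\partial\Omega)$ has the same diameter as $\partial\Omega$ and hence as $\Omega$, whereas a compact subset of the bounded open set $\Omega$ has strictly smaller diameter. Connectedness then forces $h(\partial\Omega)\cap\partial\Omega\ne\emptyset$, and any $z_3$ in that intersection gives the required pair $z_3,\,h(z_3)\in\partial\Omega$ with $z_3+h(z_3)=z_1+z_2$. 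If you want to salvage your approach, you need a step of this global type --- one that sees all of $\partial\Omega$ at once and exploits connectedness --- rather than a pointwise extremal selection.
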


\begin{proof}
Arbitrarily choose $z_1,z_2\in \Omega$. We shall prove there exist $z_3,z_4\in\partial\Omega$ such that $z_1+z_2=z_3+z_4$.

Without loss of generality, we assume that $\textup{dist}(z_1,\partial\Omega)\leq\textup{dist}(z_2,\partial\Omega)$. Then we can find $\alpha\in\bC$ such that $z_1+\alpha\in\partial\Omega$ and $z_2-\alpha\in\overline{\Omega}$. Clearly, if $z_2-\alpha\in\partial\Omega$, then we are done. So in the sequel we assume $z_2-\alpha\in\Omega$. Denote $w_1=z_1+\alpha$ and $w_2=z_2-\alpha$.

For $z\in\overline{\Omega}$, we define $h(z)=w_2+w_1-z$. Clearly, $h(\partial\Omega)\cap\Omega\ne\emptyset$, since $h(w_1)=w_2\in\Omega$.

{\it Claim.}  $h(\partial\Omega)\cap(\bC\setminus\Omega)\ne\emptyset$.

Indeed, if not, then $h(\partial\Omega)\subset\Omega$. Since $h(\partial\Omega)$ is a nonempty compact set, it follows that
the diameter of $h(\partial\Omega)$ is less than that of $\Omega$. This is a contradiction, since $h$ is a rigid transformation and the diameter of $h(\partial\Omega)$ coincides with that of $\partial\Omega$ and $\Omega$. This proves the claim.

Since $\Omega$ is connected, it follows from the proceeding claim that $h(\partial\Omega)\cap\partial\Omega\ne\emptyset$.
So there exists $z_3\in\partial\Omega$ such that $h(z_3)\in\partial\Omega$. Denote $z_4=h(z_3)$. Then $$z_3+z_4=z_3+h(z_3)=w_2+w_1=z_1+z_2,$$
which completes the proof.
\end{proof}

The following two corollaries are clear.

\begin{corollary}\label{C:plane1}
If $\Gamma$ is a nonempty, compact subset of $\bC$ and $z$ is an interior point of $\Gamma$, then there exist
distinct $z_1,z_2\in\partial\Gamma$ such that $z_1+z_2=2z$.
\end{corollary}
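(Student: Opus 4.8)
The plan is to reduce the statement directly to Lemma \ref{L:plane} by passing to a suitable connected open piece of $\Gamma$. Write $\Gamma^\circ$ for the interior of $\Gamma$ and let $\Omega$ be the connected component of $\Gamma^\circ$ that contains $z$; this makes sense precisely because $z\in\Gamma^\circ$ by hypothesis. Since $\bC$ is locally connected, $\Omega$ is open; it is connected by construction, nonempty because $z\in\Omega$, and bounded because $\Gamma$ is compact (hence bounded) and $\Omega\subset\Gamma$. Thus $\Omega$ meets all the hypotheses of Lemma \ref{L:plane}.

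The next step is to verify that $\partial\Omega\subset\partial\Gamma$. On one hand $\Omega\subset\Gamma$, so $\overline\Omega\subset\overline\Gamma=\Gamma$, whence $\partial\Omega\subset\Gamma$. On the other hand, $\Omega$ is a connected component of the open set $\Gamma^\circ$, so $\Omega$ is relatively closed in $\Gamma^\circ$; consequently no boundary point of $\Omega$ lies in $\Gamma^\circ$, i.e. $\partial\Omega\cap\Gamma^\circ=\emptyset$. Combining the two facts, and using that $\Gamma$ is closed so that $\partial\Gamma=\Gamma\setminus\Gamma^\circ$, we get $\partial\Omega\subset\Gamma\setminus\Gamma^\circ=\partial\Gamma$. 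Now Lemma \ref{L:plane} applies: since $z\in\Omega$, we have $2z=z+z\in\Omega+\Omega\subset\partial\Omega+\partial\Omega$, so there are $z_1,z_2\in\partial\Omega\subset\partial\Gamma$ with $z_1+z_2=2z$. For distinctness, observe that if $z_1=z_2$ then $z_1=z$; but $z$ lies in the open set $\Omega$, so $z\notin\partial\Omega$, contradicting $z_1\in\partial\Omega$. Hence $z_1\neq z_2$, which is what we want.

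There is essentially no hard step here: the whole argument is a single substitution $z_1=z_2=z$ into Lemma \ref{L:plane}. The only point that requires a little care is the topological bookkeeping in the middle paragraph — that a connected component of an open subset of $\bC$ is both open and relatively closed in that subset — since this is exactly what forces $\partial\Omega\subset\partial\Gamma$ and hence lets us transfer the conclusion of Lemma \ref{L:plane} from $\partial\Omega$ to $\partial\Gamma$.
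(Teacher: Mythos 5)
Your proof is correct and follows exactly the route the paper intends: the paper states this corollary without proof (``The following two corollaries are clear'') as a consequence of Lemma \ref{L:plane}, and your argument—passing to the connected component $\Omega$ of the interior of $\Gamma$ containing $z$, verifying $\partial\Omega\subset\partial\Gamma$, and observing that $z_1=z_2$ would force $z_1=z\in\Omega$, which cannot lie on $\partial\Omega$—is precisely the verification being left to the reader. Nothing is missing; the topological bookkeeping is handled correctly.
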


\begin{corollary}\label{C:plane}
If $\Gamma$ is a nonempty, compact subset of $\bC$, then
$\Gamma+\Gamma=\partial\Gamma+\partial\Gamma$.
\end{corollary}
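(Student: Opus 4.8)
The inclusion $\partial\Gamma+\partial\Gamma\subseteq\Gamma+\Gamma$ is immediate, since $\Gamma$ is closed and hence $\partial\Gamma\subseteq\Gamma$; all the content is in the reverse inclusion. The plan is to write the compact set $\Gamma+\Gamma$ as $\textup{int}(\Gamma+\Gamma)\cup\partial(\Gamma+\Gamma)$ and dispose of the two pieces separately. The boundary piece is easy: if $w\in\partial(\Gamma+\Gamma)$, write $w=z_1+z_2$ with $z_1,z_2\in\Gamma$; were $z_1$ interior to $\Gamma$, then a small disc about $z_1$ would lie in $\Gamma$ and translating by $z_2$ would put a full disc about $w$ inside $\Gamma+\Gamma$, contradicting $w\in\partial(\Gamma+\Gamma)$. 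Hence $z_1\in\partial\Gamma$, and likewise $z_2\in\partial\Gamma$, so $w\in\partial\Gamma+\partial\Gamma$.

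For the interior piece the natural idea is to reduce to Lemma \ref{L:plane}. If a point $w$ can be written $w=z_1+z_2$ with $z_1,z_2$ in a common connected component $\Omega$ of the open set $\textup{int}\,\Gamma$, then Lemma \ref{L:plane} gives $w\in\Omega+\Omega\subseteq\partial\Omega+\partial\Omega$, and since any boundary point of such a component lies in $\overline\Gamma=\Gamma$ but cannot itself be interior to $\Gamma$ (otherwise two distinct components of $\textup{int}\,\Gamma$ would meet), one has $\partial\Omega\subseteq\partial\Gamma$, whence $w\in\partial\Gamma+\partial\Gamma$. The same mechanism gives Corollary \ref{C:plane1}: for $z$ interior to $\Gamma$ one applies Lemma \ref{L:plane} to the component $\Omega$ of $\textup{int}\,\Gamma$ through $z$, obtaining $2z=z_1+z_2$ with $z_1,z_2\in\partial\Omega\subseteq\partial\Gamma$, the summands being distinct since the rigid reflection $\zeta\mapsto 2z-\zeta$ built in the proof of Lemma \ref{L:plane} does not fix $z$.

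The main obstacle is the reduction step for a general interior point $w$ of $\Gamma+\Gamma$. Such a $w$ may only admit representations $w=z_1+z_2$ with $z_1,z_2$ in different components of $\textup{int}\,\Gamma$, or with one of $z_1,z_2$ lying on $\partial\Gamma$, and when $\textup{int}\,\Gamma=\emptyset$ (for instance when $\Gamma$ is a circle) there are no components at all while $\textup{int}(\Gamma+\Gamma)$ may still be nonempty; in all these situations Lemma \ref{L:plane} does not apply to $\Gamma$ directly. The way I would attack this is to run the support-line/rigid-motion argument underlying Lemma \ref{L:plane} on the pair of compact sets $\Gamma$ and $w-\Gamma$: fix a direction, pass to the face of $\Gamma\cap(w-\Gamma)$ extremal in that direction, and argue that a point $\zeta$ of this face cannot be simultaneously interior to $\Gamma$ and have its mirror image $w-\zeta$ interior to $\Gamma$. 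Turning this into a genuine pair $z_3\in\partial\Gamma$, $w-z_3\in\partial\Gamma$ --- rather than merely forcing one of the two onto $\partial\Gamma$ --- together with the bookkeeping to glue across components and to invoke the closedness of $\partial\Gamma+\partial\Gamma$, is the delicate part, and is where I expect the real work to lie.
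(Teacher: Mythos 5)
The paper offers no proof to compare against: both this corollary and Corollary \ref{C:plane1} are simply declared ``clear'' after Lemma \ref{L:plane}. Within your proposal, the easy inclusion, the treatment of points of $\partial(\Gamma+\Gamma)$, and the derivation of Corollary \ref{C:plane1} (including distinctness of the summands: $z_1=z_2$ would force $z_1=z\in\Omega$ while $z_1\in\partial\Omega$) are all correct. The gap is the one you flag yourself --- interior points of $\Gamma+\Gamma$ admitting no representation $z_1+z_2$ with $z_1,z_2$ in a common component of $\textup{int}\,\Gamma$ --- and it cannot be closed, because the corollary as stated is false. Take $\Gamma=\overline{D(0,1)}\cup\{10\}$. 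Then $10=0+10\in\Gamma+\Gamma$, whereas $\partial\Gamma$ is the unit circle together with $\{10\}$, and no two of its points sum to $10$: two points of the circle have sum of modulus at most $2$, while the summand complementary to $10$ is $0\notin\partial\Gamma$. Note that $10$ is the centre of the disc $\overline{D(10,1)}\subset\Gamma+\Gamma$, hence an interior point of $\Gamma+\Gamma$, so the failure occurs exactly in the case you left open; moreover $\Gamma\cap(10-\Gamma)=\{0,10\}$, so your proposed support-line argument on this set can at best put one of the two summands on $\partial\Gamma$ --- the point $0$ is irredeemably interior. Your observation that such arguments merely force ``one of the two onto $\partial\Gamma$'' is thus not a technical nuisance to be overcome but the true state of affairs.

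The statement does survive in restricted forms: trivially when $\textup{int}\,\Gamma=\emptyset$, and, by your component argument, for every $w$ expressible as $z_1+z_2$ with $z_1,z_2$ in the closure of a single component of $\textup{int}\,\Gamma$; this is why Corollary \ref{C:plane1} (where $z_1=z_2=z$) is safe. But the unrestricted equality $\Gamma+\Gamma=\partial\Gamma+\partial\Gamma$ is wrong, and the inclusion the paper actually needs --- in Corollary \ref{C:sum} and in Claim 2 of the proof of Lemma \ref{L:eliminate} --- is precisely the false one, $\Gamma+\Gamma\subset\partial\Gamma+\partial\Gamma$. So the right outcome of your analysis is not to hunt for the missing argument but to report that the corollary requires an extra hypothesis, and that its applications with $\Gamma=\sigma(T)$ need separate justification (there one has additional structure, e.g.\ $\sigma(T)=-\sigma(T)$ and $\textup{ind}(T-z)=-\textup{ind}(T+z)$, which is likely what must be exploited instead).
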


\begin{lemma}\label{L:SSOBasic}
Let $T\in\CO_C$. Then
\begin{enumerate}
  \item[(i)] $\sigma(T)=-\sigma(T)$,
  \item[(ii)] $\sigma_l(T)=\sigma_\pi(T)=-\sigma_\delta(T)=-\sigma_r(T)$,
 \item[(iii)] $\Xi(T)=-\Xi(T)$.
\end{enumerate}
\end{lemma}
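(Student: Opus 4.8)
The plan is to exploit the defining relation $CTC=-T^*$ together with the fact that $C$ is a conjugation (an isometric conjugate-linear involution) to transfer every spectral statement about $T$ into a statement about $-T^*$, and then use the standard duality between $T$ and $T^*$. First I would record the elementary observation that, for any $S\in\BH$ and any conjugation $C$, the conjugate-linear isometry $C$ induces a bijection on $\bC$ of the form $\lambda\mapsto\overline{\lambda}$ at the level of spectra; concretely, $\sigma(CSC)=\overline{\sigma(S)}$, and the same holds for $\sigma_l,\sigma_r,\sigma_\pi,\sigma_\delta$ with the appropriate left/right swap absent (since $CSC$ is genuinely similar, via the conjugate-linear $C$, to $S$ with complex-conjugated scalars). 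Applying this with $S=T^*$ and using $CTC=-T^*$ gives $\sigma(T)=\sigma(-T^*)=-\sigma(T^*)=-\overline{\sigma(T)}$... but more directly $\sigma(T)=\sigma(C(-T^*)C)=\overline{\sigma(-T^*)}=\overline{-\overline{\sigma(T)}}=-\sigma(T)$, which is (i). The point is that the complex conjugations introduced by $C$ and by passing to the adjoint cancel, leaving only the sign.

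For (ii), I would argue the same way on each part. We have $T-\lambda$ left invertible $\iff$ $(T-\lambda)^*=T^*-\overline{\lambda}$ right invertible $\iff$ $C(T^*-\overline{\lambda})C=-T-\lambda$ right invertible $\iff$ $T+\lambda$ right invertible; hence $\lambda\in\sigma_l(T)\iff -\lambda\in\sigma_r(T)$, i.e. $\sigma_l(T)=-\sigma_r(T)$. Identically, $\lambda\in\sigma_\pi(T)\iff T-\lambda$ not bounded below $\iff T^*-\overline\lambda$ not surjective $\iff -T-\lambda$ not surjective $\iff -\lambda\in\sigma_\delta(T)$, giving $\sigma_\pi(T)=-\sigma_\delta(T)$. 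Combining these two with the trivial general inclusions $\sigma_\pi\subset\sigma_l$, $\sigma_\delta\subset\sigma_r$ (quoted in the excerpt) forces all four to coincide up to sign: $\sigma_l(T)=-\sigma_r(T)=-(-\sigma_\pi(T))^{c\text{-complement free}}$. More carefully, from $\sigma_\pi(T)=-\sigma_\delta(T)\subset-\sigma_r(T)=\sigma_l(T)$ and the reverse, one deduces $\sigma_l(T)=\sigma_\pi(T)$ and $\sigma_r(T)=\sigma_\delta(T)$; chaining these with $\sigma_l(T)=-\sigma_r(T)$ yields the full chain $\sigma_l(T)=\sigma_\pi(T)=-\sigma_\delta(T)=-\sigma_r(T)$.

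For (iii), I would check that each of the four defining conditions (a)--(d) of $\Xi(T)$ is preserved when one replaces $T$ by $-T^*$ and $\lambda$ by $-\lambda$ (equivalently, is symmetric under negation of the whole picture). Condition (b) uses only $\sigma(T)$, which is symmetric by (i), so the unbounded complementary components and their closures are negation-symmetric; conditions (c) and (d) involve $\sigma(T)+\sigma(T)$, $\iso[\sigma(T)+\sigma(T)]$, and $\partial\sigma(T)$, all of which negate consistently with (i); and (a), the rank-one Riesz idempotent condition, transfers because $E(-\lambda;-T^*)=CE(\lambda;T)^*C$ by the analytic functional calculus (conjugate the Cauchy integral defining $E(\lambda;T)$, noting $C(z-T)^{-1}C=(\overline z-T^*)^{-1}$), so $\dim\cH(-\lambda;-T^*)=\dim\cH(\lambda;T)$; finally $-T^*=CTC$ has the same Riesz-idempotent dimensions as $T$ itself since $C$ is an isometry, closing the loop. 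Hence $z\in\Xi(T)\iff -z\in\Xi(T)$.

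The only mildly delicate point — the ``hard part'' such as it is — is verifying the behaviour of the Riesz idempotent under the conjugation and the adjoint in condition (a) of $\Xi(T)$: one must be careful that conjugate-linearity does not disturb the Cauchy integral, which it does not because $C$ commutes past the integral after conjugating both the scalar $z$ and the resolvent, and dimensions of ranges are preserved by the isometry $C$. Everything else is a bookkeeping exercise in the standard dictionary "$T\leftrightarrow T^*$, $C(\cdot)C$, and negation", and I would present it compactly rather than spelling out every equivalence.
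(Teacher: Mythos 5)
Your overall strategy coincides with the paper's: everything is driven by the identity $C(T-\lambda)C=-(T+\lambda)^*$ together with the standard dictionary between an operator and its adjoint, and your arguments for (i) and for conditions (b)--(d) of $\Xi(T)$ in (iii) are correct. Two points, however, need repair.

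In (ii), your derivation of $\sigma_l(T)=\sigma_\pi(T)$ does not close. You correctly obtain $\sigma_l(T)=-\sigma_r(T)$ and $\sigma_\pi(T)=-\sigma_\delta(T)$, but combining these with the one-sided inclusions $\sigma_\pi(T)\subset\sigma_l(T)$ and $\sigma_\delta(T)\subset\sigma_r(T)$ only reproduces the inclusion $\sigma_\pi(T)\subset\sigma_l(T)$; the ``reverse'' inclusion you invoke is not available from these four facts, and the displayed expression involving a ``complement free'' negation is not meaningful. The missing ingredient is the standard Hilbert-space fact that an operator is left invertible if and only if it is bounded below (and right invertible if and only if it is surjective), i.e.\ $\sigma_l=\sigma_\pi$ and $\sigma_r=\sigma_\delta$ for every operator on $\cH$; the paper simply quotes this, and once you do the same your relation $\sigma_\pi(T)=-\sigma_\delta(T)$ yields the whole chain. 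In (iii), the Riesz-idempotent bookkeeping has misplaced conjugations: for $T\in\CO_C$ one has $C(z-T)C=\bar z+T^*$, hence $C(z-T)^{-1}C=(\bar z+T^*)^{-1}$ rather than $(\bar z-T^*)^{-1}$. Carrying the corrected computation through the Cauchy integral gives $CE(z;T)^*C=E(-z;T)$, whence $\dim\cH(-z;T)=\dim\cH(z;T)$ since $C$ is a bijective isometry and an idempotent and its adjoint have ranges of equal dimension. This is precisely the fact the paper imports from \cite[Thm.~1.7 (i)]{ZhuZhao}; your direct functional-calculus proof is a legitimate self-contained alternative once the signs are fixed.
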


\begin{proof}
(i) For any $\lambda\in\bC$, note that
\begin{align}\label{E030}
  C(T-\lambda)C=-(T+\lambda)^*.
\end{align} It is easy to see that $T-\lambda$ is invertible if and only if so is $(T+\lambda)^*$. Thus $\lambda\in\sigma(T)$ if and only if $-\lambda\in\sigma(T)$. So $\sigma(T)=-\sigma(T)$.

(ii) Since $T$ acts on a Hilbert space, it is well known that $\sigma_l(T)=\sigma_\pi(T)$ and $\sigma_\delta(T)=\sigma_r(T)$.
On the other hand, one can see from (\ref{E030}) that $T-\lambda$ is bounded blow if and only if so is $(T+\lambda)^*$, which equals that $T+\lambda$ is surjective. This proves (ii).

(iii) Let $z\in\Xi(T)$. Since $\sigma(T)=-\sigma(T)$, it follows that $-z\in\iso\sigma(T)$, $-z$ lies in the unbounded components of $[\bC\setminus\sigma(T)]\cup\{-z\}$ and there exist no distinct $z_1,z_2\in\partial\sigma(T)$ such that $2z=z_1+z_2$.
On the other hand, by \cite[Thm. 1.7 (i)]{ZhuZhao}, we have $\dim\cH(-z;T)=1$. Thus $-z\in \Xi(T)$.
\end{proof}

\begin{corollary}\label{C:sum}
If $T\in\CO_C$, then
$$\partial\sigma(T)+\partial\sigma(T)=\sigma_\pi(T)+\sigma_\pi(T)=\sigma(T)+\sigma(T)=-[\sigma_\pi(T)+\sigma_\pi(T)].$$
\end{corollary}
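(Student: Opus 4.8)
The plan is to establish the four-way chain of equalities in Corollary~\ref{C:sum} by combining the symmetry facts from Lemma~\ref{L:SSOBasic} with the topological identity in Corollary~\ref{C:plane}. First I would record that $\sigma(T)$ is a nonempty compact subset of $\bC$, so Corollary~\ref{C:plane} applies directly to $\Gamma=\sigma(T)$, giving $\partial\sigma(T)+\partial\sigma(T)=\sigma(T)+\sigma(T)$. This disposes of the outer pair of the claimed chain immediately.

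Next I would handle the middle equality $\sigma_\pi(T)+\sigma_\pi(T)=\sigma(T)+\sigma(T)$. The inclusion ``$\subseteq$'' is trivial since $\sigma_\pi(T)\subseteq\sigma(T)$. For the reverse, the key observation is that $\partial\sigma(T)\subseteq\sigma_\pi(T)$: every boundary point of the spectrum of any bounded operator is an approximate eigenvalue. Hence $\partial\sigma(T)+\partial\sigma(T)\subseteq\sigma_\pi(T)+\sigma_\pi(T)$, and combining with the equality from the previous paragraph we get $\sigma(T)+\sigma(T)=\partial\sigma(T)+\partial\sigma(T)\subseteq\sigma_\pi(T)+\sigma_\pi(T)\subseteq\sigma(T)+\sigma(T)$, forcing equality throughout.

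Finally, for the last equality $\sigma_\pi(T)+\sigma_\pi(T)=-[\sigma_\pi(T)+\sigma_\pi(T)]$, I would invoke Lemma~\ref{L:SSOBasic}(ii), which gives $\sigma_\pi(T)=\sigma_l(T)=-\sigma_r(T)=-\sigma_\delta(T)$; in particular $\sigma_\pi(T)=-\sigma_r(T)$. Actually the cleanest route is to note from Lemma~\ref{L:SSOBasic}(i) that $\sigma(T)=-\sigma(T)$, hence $\sigma(T)+\sigma(T)=-\sigma(T)+(-\sigma(T))=-[\sigma(T)+\sigma(T)]$; since we have already identified $\sigma_\pi(T)+\sigma_\pi(T)$ with $\sigma(T)+\sigma(T)$, the desired sign symmetry follows at once. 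There is no serious obstacle here — the proof is a short assembly of the preceding lemmas; the only point requiring a little care is making sure the standard fact $\partial\sigma(T)\subseteq\sigma_\pi(T)$ is available (it is elementary spectral theory), and being careful about which sets are taken to be nonempty and compact so that Corollary~\ref{C:plane} genuinely applies.
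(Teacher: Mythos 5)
Your proposal is correct and follows essentially the same route as the paper's own proof: apply Corollary \ref{C:plane} to the nonempty compact set $\sigma(T)$, sandwich $\sigma_\pi(T)+\sigma_\pi(T)$ between $\partial\sigma(T)+\partial\sigma(T)$ and $\sigma(T)+\sigma(T)$ using the standard fact $\partial\sigma(T)\subset\sigma_\pi(T)$, and then invoke $\sigma(T)=-\sigma(T)$ from Lemma \ref{L:SSOBasic}(i) for the final sign symmetry. No gaps.
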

%
%To prove the proceeding lemma, we need an auxiliary result, which is an easy observation. For the reader's convenience, we write down its proof.

\begin{proof}
By \cite[Prop. VII.6.7]{ConwayFA}, we have $\partial\sigma(T)\subset\sigma_\pi(T)$.
Then, by Corollary \ref{C:plane}, $$ \sigma(T)+ \sigma(T)= [\partial\sigma(T)+\partial\sigma(T)]\subset [ \sigma_\pi(T)+ \sigma_\pi(T)]\subset [ \sigma(T)+ \sigma(T)].$$
So $\partial\sigma(T)+\partial\sigma(T)=\sigma_\pi(T)+\sigma_\pi(T)=\sigma(T)+\sigma(T)$. Since $\sigma(T)=-\sigma(T)$, it follows that
$$\sigma(T)+\sigma(T)=-[\sigma(T)+\sigma(T)]=-[\sigma_\pi(T)+\sigma_\pi(T)].$$
This completes the proof.
\end{proof}

Let $\mathcal{X}$ be a Banach space and $A\in\mathcal{B(X)}$.
Recall that $A$ is called a {\it semi-Fredholm} operator, if
$\ran A$ is closed and either $\dim\ker A$ or $\dim\mathcal{X}/\ran A$ is finite; in this case,
$\textup{ind} A:=\dim\ker A-\dim  \mathcal{X}/\ran A$ is called the {\it index} of $A$.

\begin{proposition}\label{L:key}
Let $T\in\CO_C$ and $p\in[1,\infty]$.
 If $z\in[\sigma_\pi(T)\setminus\Xi(T)]$, then
$2z\in[\sigma_\pi(\ad_{T,p})\cap\sigma_\pi(\ad_T)]$.
\end{proposition}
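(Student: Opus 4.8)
The plan is to take $z\in\sigma_\pi(T)\setminus\Xi(T)$ and produce, for every $p\in[1,\infty]$, unit-normalized elements $X_n\in\CO_{C,p}$ with $\inf_n\|X_n\|>0$, $\sup_n\|X_n\|_p<\infty$, and $\ad_T(X_n)-2zX_n\to 0$ in $\|\cdot\|_p$; this is exactly what is needed to conclude $2z\in\sigma_\pi(\ad_{T,p})\cap\sigma_\pi(\ad_T)$. The strategy is a case analysis based on which defining condition (a)--(d) of $\Xi(T)$ fails at $z$, invoking the lemmas already assembled. If $\dim\ker(T-z)^2\ge 2$ we are done by Lemma~\ref{L:eigen}; if $z\in\iso\sigma(T)$ and $\dim\cH(z;T)=\infty$ we are done by Lemma~\ref{L:essSpect}. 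So we may assume $z$ is a ``thin'' spectral point: either $z$ is not isolated in $\sigma(T)$, or $z$ is isolated with $1\le\dim\cH(z;T)<\infty$; in the isolated case, since $\dim\ker(T-z)^2\le\dim\cH(z;T)<\infty$ and we've excluded $\dim\ker(T-z)^2\ge 2$, we have $\dim\ker(T-z)=\dim\ker(T-z)^2=1$, which forces $\dim\cH(z;T)=1$ (the nilpotent part on a $1$-dimensional generalized kernel is trivial), so condition (a) holds and one of (b), (c), (d) must fail.

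First I would dispose of the case where $2z\in[\sigma_\pi(T)+\sigma_\pi(T)]$ via a pair of \emph{distinct} approximate eigenvalues: if $z=\tfrac12(\lambda+\mu)$ with $\lambda,\mu\in\sigma_\pi(T)$ and $\lambda\ne\mu$, then Proposition~\ref{P:key} gives the conclusion directly. Using Lemma~\ref{L:SSOBasic}(ii), $\sigma_\pi(T)=\sigma_l(T)\supset\partial\sigma(T)$, and Corollary~\ref{C:sum} tells us $\partial\sigma(T)+\partial\sigma(T)=\sigma_\pi(T)+\sigma_\pi(T)=\sigma(T)+\sigma(T)$. So whenever $2z$ lies in $\sigma(T)+\sigma(T)$ and can be written as a sum of \emph{two distinct} boundary points, we are finished. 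By Corollary~\ref{C:plane1}, if $z$ is an \emph{interior} point of $\sigma(T)$ then such a distinct decomposition of $2z$ exists (using points of $\partial\sigma(T)\subset\sigma_\pi(T)$), so that case is handled. This leaves $z\in\partial\sigma(T)$, and now I would chase the failure of the remaining conditions.

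If (d) fails, i.e.\ there do exist distinct $z_1,z_2\in\partial\sigma(T)$ with $2z=z_1+z_2$, then Proposition~\ref{P:key} applies since $\partial\sigma(T)\subset\sigma_\pi(T)$. If (b) fails, so $z$ does \emph{not} lie in the closure of the unbounded components of $\bC\setminus\sigma(T)$, then $z$ is a boundary point of $\sigma(T)$ belonging to the boundary of a bounded complementary component (a ``hole''); taking a small analytic Cauchy domain $\Omega$ around a hole whose closure meets $\sigma(T)$ only near $z$, one expects $2z\in\Omega+\Omega\subset\partial\Omega+\partial\Omega$ by Lemma~\ref{L:plane}, and points of $\partial\Omega$ near $\sigma(T)$ can be taken in $\sigma_\pi(T)$; I would use this to again realize $2z$ as a sum of two distinct approximate eigenvalues (taking a bit of care that the two boundary points produced are genuinely distinct and genuinely approximate point spectrum of $T$, which is where the geometry of Lemma~\ref{L:plane} does the work). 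If (c) fails, i.e.\ $2z\notin\iso[\sigma(T)+\sigma(T)]$, then $2z$ is a non-isolated point of $\sigma(T)+\sigma(T)=\sigma_\pi(T)+\sigma_\pi(T)$, and one can approximate it by sums $\lambda_k+\mu_k$ with $\lambda_k,\mu_k\in\sigma_\pi(T)$ and $\lambda_k+\mu_k\ne 2z$; a limiting/perturbation argument on the rank-two operators $X_n=e_n\otimes(Cf_n)-f_n\otimes(Ce_n)$ built from approximate eigenvectors (as in the proof of Proposition~\ref{P:key}), combined with the non-degeneracy estimate of Lemma~\ref{L:NormEsit}, should yield the required sequence in $\CO_{C,p}$.

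The main obstacle I anticipate is the case where $z$ is a \emph{non-isolated} boundary point of $\sigma(T)$ for which condition (d) does hold (no distinct boundary decomposition of $2z$) but $z$ is not interior — that is, the only way to write $2z=z_1+z_2$ with $z_1,z_2\in\partial\sigma(T)$ is $z_1=z_2=z$, yet $z\notin\iso\sigma(T)$. Here Proposition~\ref{P:key} is unavailable (it requires $\lambda\ne\mu$), and Lemma~\ref{L:eigen} is unavailable (we are in the case $\dim\ker(T-z)^2\le 1$). The remedy I would pursue: since $z$ is non-isolated in $\sigma(T)=\sigma_\pi(T)$, choose $\lambda_n\in\sigma_\pi(T)$ with $\lambda_n\to z$ and $\lambda_n\ne z$; then $2z$ is a limit of $\lambda_n+z$, and using approximate eigenvectors for $\lambda_n$ and for $z$ (distinct eigenvalues, so Lemma~\ref{L:NormEsit} gives the norm lower bound once we check the relevant inner products don't degenerate to $1$ in the limit, exactly as in Proposition~\ref{P:key}'s Claim) we build $X_n\in\CO_{C,p}$ with $\ad_T(X_n)-(\lambda_n+z)X_n\to 0$; since $\lambda_n+z\to 2z$ and the $X_n$ are uniformly bounded below in norm and above in $\|\cdot\|_p$, we get $2z\in\sigma_\pi(\ad_{T,p})\cap\sigma_\pi(\ad_T)$. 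Verifying that the non-degeneracy of $\liminf_n\|X_n\|$ survives (i.e.\ that the approximate eigenvectors for $\lambda_n$ and $z$ stay asymptotically non-parallel, which follows because $\lambda_n\ne z$, $\lambda_n\to z$, combined with the argument in Proposition~\ref{P:key}) is the delicate point that ties the whole case analysis together.
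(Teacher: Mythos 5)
Your proposal is correct and follows essentially the same route as the paper's proof: the same case analysis reducing everything to Proposition \ref{P:key} via pairs of \emph{distinct} approximate eigenvalues, with Lemmas \ref{L:eigen} and \ref{L:essSpect} handling the degenerate isolated cases, Corollary \ref{C:plane1} handling interior points and ``holes'', and closedness of the approximate point spectrum handling the accumulation cases. Two small repairs are needed but neither is a genuine gap: the identity $\sigma(T)=\sigma_\pi(T)$ you invoke is unjustified and unnecessary (a non-isolated boundary point of $\sigma(T)$ is automatically an accumulation point of $\partial\sigma(T)\subset\sigma_\pi(T)$, which is all your limiting argument requires), and in the failure-of-(b) case Corollary \ref{C:plane1} should be applied directly to the closure of the bounded component of $[\bC\setminus\sigma(T)]\cup\{z\}$ containing $z$ --- whose boundary lies in $\partial\sigma(T)\subset\sigma_\pi(T)$ --- rather than to an auxiliary Cauchy domain around the hole.
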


\begin{proof}
If $z$ is an accumulation point of $\sigma_\pi(T)$, then we can find $\{z_n: n\geq 1\}\subset\sigma_\pi(T)\setminus\{z\}$ such that $z_n\rightarrow z$. So $z_n+z\rightarrow 2z$.
By Proposition \ref{P:key}, $z_n+z\in[\sigma_\pi(\ad_{T,p})\cap\sigma_\pi(\ad_T)]$ for  $n\geq 1$. It follows that $2z\in[\sigma_\pi(\ad_{T,p})\cap\sigma_\pi(\ad_T)]$.

In the following we assume that $z\in\iso\sigma_\pi(T)$. Then there exists $r>0$ such that $T-w$ is bounded below for $w\in B(z,r)\setminus \{z\}$. Moreover, there exists nonnegative integer $n$ or $n=\infty$ such that
$\textup{ind}(T-w)=-n$ for all $w\in B(z,r)\setminus \{z\}$.

Now the proof is divided into four cases.

{\it Case 1.}  $n>0$.

This means that $B(z,r)\subset\sigma(T)$. So $z$ lies in the interior of $\sigma(T)$.
By Corollary \ref{C:plane1}, there exist distinct $z_1,z_2\in\partial\sigma(T)$ such that $z_1+z_2=2z$.
Note that $\partial\sigma(T)\subset\sigma_\pi(T)$. By Proposition \ref{P:key}, we have $2z\in[\sigma_\pi(\ad_{T,p})\cap\sigma_\pi(\ad_T)]$.

{\it Case 2.} $n=0$.

Since $T-w$ is bounded below for $w\in B(z,r)\setminus \{z\}$, this means that $T-w$ is invertible. So $z\in\iso\sigma(T)$.

{\it Case 2.1.} $\dim\cH(z;T)=\infty$.

In view of Lemma \ref{L:essSpect}, this implies $2z\in[\sigma_\pi(\ad_{T,p})\cap\sigma_\pi(\ad_T)]$.

{\it Case 2.2.} $1<\dim\cH(z;T)<\infty$.

This means that $\dim\ker(T-z)^2\geq 2$.
By Lemma \ref{L:eigen}, we obtain $2z\in[\sigma_\pi(\ad_{T,p})\cap\sigma_\pi(\ad_T)]$.

{\it Case 2.3.} $ \dim\cH(z;T)=1$.

Note that $z\notin\Xi(T)$. Then there are three possibilities:
(i) $z$ lies in a bounded component $\Omega$ of $[\bC\setminus\sigma(T)]\cup\{z\}$;
(ii) $2z$ is an accumulation point of $\sigma(T)+\sigma(T)$;
(iii) There exist distinct $z_1,z_2\in\partial\Omega$ such that $2z=z_1+z_2$.

By Proposition \ref{P:key}, we can see in case (iii) that $2z\in[\sigma_\pi(\ad_{T,p})\cap\sigma_\pi(\ad_T)]$.

In case (i), by Corollary \ref{C:plane1}, there exist distinct $w_1,w_2\in\partial\Omega$ such that $2z=w_1+w_2$.
Note that $\partial\Omega\subset\partial\sigma(T)\subset\sigma_\pi(T)$.
Using Proposition \ref{P:key} again, we have $2z\in[\sigma_\pi(\ad_{T,p})\cap\sigma_\pi(\ad_T)]$.

Now we consider case (ii). By Corollary \ref{C:sum}, we can choose $\{\lambda_n,\mu_n: n\geq 1\}\subset\partial\sigma(T)$ such that $\lambda_n+\mu_n\ne 2z$ for all $n$ and $\lambda_n+\mu_n\rightarrow 2z$.
Without loss of generality, we may directly assume that $\lambda_n\rightarrow\lambda_0$ and
$\mu_n\rightarrow\mu_0$. Thus $\lambda_0,\mu_0\in\partial\sigma(T)$ and $\lambda_0+\mu_0=2z$.
If $\lambda_0\ne\mu_0$, then, by Proposition \ref{P:key}, we have $2z\in[\sigma_\pi(\ad_{T,p})\cap\sigma_\pi(\ad_T)]$.
If $\lambda_0=\mu_0$, then $\lambda_0=\mu_0=z$.
Since $\lambda_n+\mu_n\ne 2z$ for all $n$, either $\{\lambda_n\}$  or $\{\mu_n\}$ has a subsequence, denoted as $\{w_{k}\}$,
converging to $z$ and $w_k\ne z$ for all $k$. Then, by Proposition \ref{P:key}, $w_{k}+z\in[\sigma_\pi(\ad_{T,p})\cap\sigma_\pi(\ad_T)]$.
Since $w_{k}+z\rightarrow 2z$, it follows that $2z\in[\sigma_\pi(\ad_{T,p})\cap\sigma_\pi(\ad_T)]$.
Hence we conclude the proof.
\end{proof}

\begin{lemma}\label{L:Defect}
Let $T\in \CO_C$. Then
\begin{enumerate}
    \item[(i)] $\sigma_\pi(\ad_T)\subset [\sigma(T)+\sigma(T)],$
    \item[(ii)] $\sigma_\delta(\ad_T)\subset[\sigma(T)+\sigma(T)],$
    and
    \item[(iii)] $\sigma(\ad_T) \subset [\sigma(T)+\sigma(T)].$
  \end{enumerate}
\end{lemma}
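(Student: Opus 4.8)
The plan is to view $\ad_T$ as one diagonal block of the ordinary inner derivation $\delta_T:X\mapsto TX-XT$ acting on all of $\BH$, and then to feed in the classical description of the spectrum of $\delta_T$.

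First I would record the splitting $\BH=\SC\oplus\CO_C$: every $X\in\BH$ decomposes as $X=\tfrac12(X+CX^*C)+\tfrac12(X-CX^*C)$, with the first summand in $\SC$ and the second in $\CO_C$; the two subspaces meet only at $0$, and the associated projections have norm at most $1$ because $\|CX^*C\|=\|X\|$, so this is a topological direct sum of closed subspaces. Next, a short computation using $CTC=-T^*$ shows that this decomposition reduces $\delta_T$: for $X\in\CO_C$ one has $C(TX-XT)C=(-T^*)(-X^*)-(-X^*)(-T^*)=T^*X^*-X^*T^*=-(TX-XT)^*$, so $\delta_T(X)\in\CO_C$, and the analogous identity shows $\delta_T(X)\in\SC$ whenever $X\in\SC$. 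Thus $\delta_T$ is block-diagonal with respect to $\BH=\SC\oplus\CO_C$, with $\CO_C$-block exactly $\ad_T$; in particular $\sigma(\ad_T)\subset\sigma(\delta_T)$, and since $\sigma_\pi(S)\subset\sigma(S)$ and $\sigma_\delta(S)\subset\sigma(S)$ hold for every bounded operator $S$, statements (i)--(iii) all reduce to proving $\sigma(\delta_T)\subset\sigma(T)+\sigma(T)$.

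Finally I would invoke the classical results on generalized derivations cited in Section 1 (\cite{Lumer,Rosenblum}), which give $\sigma(\delta_T)\subset\{\alpha-\beta:\alpha,\beta\in\sigma(T)\}$; by Lemma \ref{L:SSOBasic}(i) we have $\sigma(T)=-\sigma(T)$, hence $\{\alpha-\beta:\alpha,\beta\in\sigma(T)\}=\{\alpha+\beta:\alpha,\beta\in\sigma(T)\}=\sigma(T)+\sigma(T)$, and the lemma follows. I do not expect a genuine obstacle here; the one point that needs care is the verification that $\SC\oplus\CO_C$ actually reduces $\delta_T$ (both summands invariant, not merely one of them), since for a merely invariant subspace the spectrum of the restriction can be strictly larger than that of the ambient operator — it is the reducing property that legitimizes the inclusion $\sigma(\ad_T)\subset\sigma(\delta_T)$. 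If one prefers to avoid citing the full computation of $\sigma(\delta_T)$, the inclusion $\sigma(\delta_T)\subset\sigma(T)-\sigma(T)$ is exactly the elementary half (the Rosenblum operator written as a double contour integral), and that alone suffices.
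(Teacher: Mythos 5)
Your proposal is correct and follows essentially the same route as the paper: both use the topological splitting $\BH=\CO_C\oplus\SC$ to exhibit $\delta_T$ as block-diagonal with $\ad_T$ as the $\CO_C$-block, then import the classical description of the spectrum of $\delta_T$ together with the symmetry $\sigma(T)=-\sigma(T)$. The only cosmetic difference is that the paper tracks $\sigma_\pi$ and $\sigma_\delta$ separately through the block decomposition (via Herrero's formulas $\sigma_\pi(\delta_T)=\sigma_\pi(T)-\sigma_\delta(T)$, etc.), while you pass through the coarser inclusion $\sigma(\ad_T)\subset\sigma(\delta_T)\subset\sigma(T)-\sigma(T)$, which suffices since only an upper bound is claimed.
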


\begin{proof}
(i)
Note that $\BH=\CO_C+\SC$ and $\CO_C\cap\SC=\{0\}$, that is, $\SC$ is a topology complement of $\CO_C$. For $X\in\SC$, define $U_T(X)=[T,X]$. One can check that $U_T$ is a bounded linear operator on $\SC$.  Thus
  \begin{align}\label{E9}
    \delta_{T}=\begin{pmatrix}
                \ad_T&0 \\ 0&U_T
  \end{pmatrix}
  \begin{matrix}
    \CO_C\\
  \SC  ,
  \end{matrix}
  \end{align} which implies that $\sigma_\pi(\ad_{T})\subset \sigma_\pi(\delta_{T})$.
In view of Lemma \ref{L:SSOBasic} (ii) and {\cite[Thm. 3.19]{Herr89}}, we have $\sigma_\pi(\delta_{T})=\sigma_\pi(T)+\sigma_\pi(T)\subset[\sigma(T)+\sigma(T)]$. Thus (i) follows readily.

(ii)  In view of (\ref{E9}), we have $\sigma_\delta(\ad_{T})\subset\sigma_\delta(\delta_{T})$. Using \cite[Thm. 3.19]{Herr89}, we deduce that $\sigma_\delta(\delta_{T})=\sigma_\delta(T)-\sigma_\pi(T)$. By Lemma \ref{L:SSOBasic} (i) and Corollary \ref{C:sum}, we deduce that $\sigma_\delta(\delta_{T})=\sigma({T})+\sigma({T})$. Thus (ii) follows readily.

(iii) Since $\sigma(\ad_{T})=\sigma_\pi(T)\cup\sigma_\delta(T)$, the result is clear.
\end{proof}

For $p\in[1,\infty]$, denote $\SCP=\SC\cap\BPH$. Then $\SCP$ is a subspace of $\BPH$, which is closed in $\|\cdot\|_p$ norm.

\begin{lemma}\label{L:quasi}
Let $T\in \CO_C$ and $p\in [1, \infty]$. Then
\begin{enumerate}
    \item[(i)] $\sigma_\pi(\ad_{T,p})\subset [\sigma(T)+\sigma(T)],$
    \item[(ii)] $\sigma_\delta(\ad_{T,p})\subset[\sigma(T)+\sigma(T)].$
  \end{enumerate}
\end{lemma}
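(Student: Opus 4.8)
The plan is to mimic the proof of Lemma~\ref{L:Defect}, working throughout with $\BPH$, $\CO_{C,p}$, $\SCP$ and $\delta_{T,p}:=\delta_T|_{\BPH}$ in place of $\BH$, $\CO_C$, $\SC$ and $\delta_T$. First I would record that $\delta_T$ leaves $\BPH$ invariant, since $\BPH$ is a two-sided ideal of $\BH$, so that $\delta_{T,p}$ is a bounded operator on $(\BPH,\|\cdot\|_p)$ by \cite[Thm. 2.3.10]{Ringrose}. Since $T\in\CO_C$, a direct computation from $CTC=-T^*$ shows $[T,X]\in\CO_C$ for every $X\in\CO_C$ and $[T,X]\in\SC$ for every $X\in\SC$; hence $\delta_{T,p}$ leaves both $\CO_{C,p}$ and $\SCP$ invariant.

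Next I would check that $\BPH=\CO_{C,p}+\SCP$ with $\CO_{C,p}\cap\SCP=\{0\}$, and that this is a \emph{topological} direct sum. For $X\in\BPH$ one writes $X=\frac12(X-CX^*C)+\frac12(X+CX^*C)$, the two summands lying in $\CO_{C,p}$ and $\SCP$ respectively; since $Y\mapsto CYC$ preserves rank and operator norm, it preserves every Schatten $p$-norm, so the two projections are bounded on $(\BPH,\|\cdot\|_p)$. With respect to this decomposition $\delta_{T,p}$ is block diagonal,
\[\delta_{T,p}=\begin{pmatrix}\ad_{T,p}&0\\0&U_{T,p}\end{pmatrix},\qquad U_{T,p}(X)=[T,X]\ \ (X\in\SCP),\]
so that $\sigma_\pi(\ad_{T,p})\subset\sigma_\pi(\delta_{T,p})$, $\sigma_\delta(\ad_{T,p})\subset\sigma_\delta(\delta_{T,p})$ and, in particular, $\sigma(\ad_{T,p})\subset\sigma(\delta_{T,p})$.

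It then suffices to bound $\sigma(\delta_{T,p})$, and here I would invoke Rosenblum's theorem \cite{Rosenblum}: if $\lambda\notin\sigma(T)-\sigma(T)$ then $\sigma(T-\lambda)\cap\sigma(T)=\emptyset$, and the inverse of $\delta_{T,p}-\lambda$ on $\BPH$ is given by the Rosenblum integral $Y\mapsto\frac{1}{2\pi\textup{i}}\int_\Gamma((T-\lambda)-z)^{-1}Y(T-z)^{-1}\,\textup{d}z$, taken over a contour $\Gamma$ separating $\sigma(T-\lambda)$ from $\sigma(T)$. Because $\BPH$ is a two-sided ideal, this formula maps $\BPH$ boundedly into itself, whence $\delta_{T,p}-\lambda$ is invertible on $(\BPH,\|\cdot\|_p)$. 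Thus $\sigma(\delta_{T,p})\subset\sigma(T)-\sigma(T)$, which equals $\sigma(T)+\sigma(T)$ by Lemma~\ref{L:SSOBasic}(i). Combining this with the inclusions of the previous paragraph yields (i) and (ii).

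I expect the two points needing care to be that $\BPH=\CO_{C,p}+\SCP$ is a topological and not merely algebraic complement --- which comes down to $Y\mapsto CYC$ being a $\|\cdot\|_p$-isometry --- and that the Rosenblum integral stays inside the Schatten ideal $\BPH$; both are routine, so the statement is essentially a transcription of Lemma~\ref{L:Defect} to the Schatten-class setting.
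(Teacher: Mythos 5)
Your proof is correct, and its skeleton is the same as the paper's: you split $\BPH=\CO_{C,p}\oplus\SCP$ (a topological direct sum because $Y\mapsto CY^*C$ is additive, rank-preserving and isometric, hence preserves all approximation numbers and every $\|\cdot\|_p$), write $\delta_T|_{\BPH}$ in block-diagonal form, and reduce the claim to controlling the spectrum of the restricted derivation. The only genuine difference is the last step. The paper quotes Herrero's Theorems 3.53 and 3.54, which give the \emph{exact} equalities $\sigma_\pi(\delta_T|_{\BPH})=\sigma_\pi(T)-\sigma_\delta(T)$ and $\sigma_\delta(\delta_T|_{\BPH})=\sigma_\delta(T)-\sigma_\pi(T)$, and then converts these to $\sigma(T)+\sigma(T)$ via Lemma~\ref{L:SSOBasic} and Corollary~\ref{C:sum}. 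You instead bound the \emph{whole} spectrum $\sigma(\delta_T|_{\BPH})$ by Rosenblum's theorem, checking that the Rosenblum integral stays in the Schatten ideal, and then use $\sigma_\pi\subset\sigma$ and $\sigma_\delta\subset\sigma$ together with $\sigma(T)=-\sigma(T)$. Your route is more elementary and self-contained (Rosenblum's classical result plus the ideal property of $\BPH$ suffice, since only inclusions are asserted), at the cost of discarding the finer information about the approximate point and defect spectra that the paper's citation provides for free; either version proves the lemma. Two cosmetic points: ``preserves rank and operator norm'' should be expanded to the approximation-number argument if you want the $\|\cdot\|_p$-isometry to be airtight, and the orientation/sign conventions in your contour integral should be fixed so that $(T-z)^{-1}$ versus $(z-T)^{-1}$ matches the contour you choose. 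Neither affects correctness.
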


\begin{proof}
Note that $\BPH=\CO_{C,p}+\SCP$ and $\CO_{C,p}\cap\SCP=\{0\}$, that is, $\SCP$ is a topology complement of $\CO_{C,p}$.
  Thus
  \begin{align}\label{E22}
    \delta_{T}|_{\BPH}=\begin{pmatrix}
                \ad_{T,p}&0 \\ 0&U_{T,p}
  \end{pmatrix}
  \begin{matrix}
   \CO_{C,p}\\ \SCP
  \end{matrix},
  \end{align} where $U_{T,p}=U_{T}|_{\SCP}$.
It is clear that
$$\sigma_\pi(\ad_{T,p})\subset \sigma_\pi(\delta_{T}|_{\BPH}),\ \ \sigma_\delta(\ad_{T,P})\subset\sigma_\delta(\delta_{T}|_{\BPH}). $$

In view of Theorems 3.53 and 3.54 in {\cite{Herr89}}, we have
$\sigma_\pi(\delta_{T}|_{\BPH})=\sigma_\pi(T)-\sigma_\delta(T)$ and
$\sigma_\delta(\delta_{T}|_{\BPH})=\sigma_\delta(T)-\sigma_\pi(T)$.
Then the desired result follows readily from Lemma \ref{L:SSOBasic} and Corollary \ref{C:sum}.
\end{proof}

\begin{lemma}\label{L:thrBythr}
Let $C$ be a conjugation on $\cH$ with form
\[C=\begin{pmatrix}
0&0&C_3\\
0&C_2&0\\
C_1&0&0
\end{pmatrix}\begin{matrix}
\cH_1\\ \cH_2 \\ \cH_3
\end{matrix},\] where $\cH_1\oplus \cH_2\oplus\cH_3=\cH$ and $\dim\cH_1=\dim\cH_3=1$.
Then $C_2$ is a conjugation on $\cH_2$, $C_3=C_1^{-1}$ and \[D:=\begin{pmatrix}
 0&C_3\\
C_1&0
\end{pmatrix}\begin{matrix}
\cH_1 \\ \cH_3
\end{matrix} \] is a conjugation on $\cH_1\oplus\cH_3$; moreover,
an operator $R$ lies in $\CO_C$ if and only if $R$ can be written as
\begin{align}\label{E-temp}
  R=\begin{pmatrix}
Y&Z&0\\
W&X&-C_2Z^*C_3\\
0&-C_1W^*C_2&-C_1Y^*C_3
\end{pmatrix}\begin{matrix}
\cH_1\\ \cH_2 \\ \cH_3
\end{matrix},
\end{align} where $X\in \CO_{C_2}, Y\in \B(\cH_1), Z\in \B(\cH_2,\cH_1)$ and $W\in \B(\cH_1,\cH_2).$
\end{lemma}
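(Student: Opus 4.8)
The plan is to read off the structural statements about $C_1,C_2,C_3,D$ directly from the axioms defining a conjugation, and then to obtain the description of $\CO_C$ by expanding $CRC=-R^*$ blockwise, with the one-dimensionality of $\cH_1$ and $\cH_3$ entering only to annihilate the two outer corners of $R$.

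First I would record the elementary identities. Since $C^2=I$ and $C$ has the displayed block form, computing $C^2$ block by block gives $C_3C_1=I_{\cH_1}$, $C_1C_3=I_{\cH_3}$ and $C_2^2=I_{\cH_2}$; in particular $C_3=C_1^{-1}$ and $C_1$ is invertible. Feeding vectors supported in a single summand into $\la Cx,Cy\ra=\la y,x\ra$ yields $\la C_2x,C_2y\ra=\la y,x\ra$ on $\cH_2$ together with the analogous isometry-type identities for $C_1\colon\cH_1\to\cH_3$ and $C_3\colon\cH_3\to\cH_1$; combined with $C_2^2=I$ this shows $C_2$ is a conjugation on $\cH_2$. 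The same two computations give $D^2=I_{\cH_1\oplus\cH_3}$ and $\la Du,Dv\ra=\la v,u\ra$, so $D$ is a conjugation on $\cH_1\oplus\cH_3$. I would also note the conjugate-linear adjoint relations $C_1^*=C_1^{-1}=C_3$, $C_3^*=C_1$, $C_2^*=C_2$ (hence $D^*=D$), which are what legitimize taking adjoints of the triple products appearing below.

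For the description of $\CO_C$ I would write $R$ as a $3\times3$ operator matrix $(R_{ij})_{i,j=1}^{3}$, $R_{ij}\in\B(\cH_j,\cH_i)$, compute $CRC$ block by block, and equate it to $-R^*$. The diagonal equations read $C_3R_{33}C_1=-R_{11}^*$, $C_2R_{22}C_2=-R_{22}^*$, $C_1R_{11}C_3=-R_{33}^*$, the first and third of which are equivalent and give $R_{33}=-C_1R_{11}^*C_3$, while the middle one says $R_{22}\in\CO_{C_2}$. The equations in positions $(2,3),(3,2),(1,2),(2,1)$ give $R_{23}=-C_2R_{12}^*C_3$ and $R_{32}=-C_1R_{21}^*C_2$, with the remaining two then automatically satisfied. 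Setting $Y=R_{11}$, $Z=R_{12}$, $W=R_{21}$, $X=R_{22}$, this is precisely the matrix (\ref{E-temp}); conversely, substituting (\ref{E-temp}) back into $CRC$ and simplifying with the adjoint relations above shows it equals $-R^*$, so the two descriptions coincide. (An equivalent bookkeeping-saving route is to group $\cH=(\cH_1\oplus\cH_3)\oplus\cH_2$, note $C=D\oplus C_2$, and first reduce to $R'\in\CO_D$ on $\cH_1\oplus\cH_3$, $R_{22}\in\CO_{C_2}$, and the off-diagonal $\cH_2$-blocks determined by one another, then analyze $\CO_D$.)

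The one substantive point — and the only place the hypothesis $\dim\cH_1=\dim\cH_3=1$ is used — is that the corner equations in positions $(1,3)$ and $(3,1)$ read $C_3R_{31}C_3=-R_{31}^*$ and $C_1R_{13}C_1=-R_{13}^*$, i.e.\ each of $R_{13}$ and $R_{31}$ is ``skew-symmetric'' with respect to the relevant pair of conjugation-type maps. Picking unit vectors $\eta_1\in\cH_1$ and $\eta_3:=C_1\eta_1\in\cH_3$ (so that $C_3\eta_3=\eta_1$) and writing $R_{13}\eta_3=\beta\eta_1$, $R_{31}\eta_1=\gamma\eta_3$, these two equations become $\bar\beta=-\bar\beta$ and $\bar\gamma=-\bar\gamma$, forcing $R_{13}=R_{31}=0$, which is exactly the two zeros in (\ref{E-temp}). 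Beyond isolating this cancellation, the argument is routine blockwise bookkeeping together with the conjugate-linear adjoint relations; that bookkeeping is the only place I expect any tedium.
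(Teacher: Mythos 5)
Your proof is correct and follows essentially the same route as the paper: a direct blockwise expansion of $CRC=-R^*$, with the diagonal and the four inner off-diagonal equations pairing up exactly as you describe. The only difference is at the corner blocks, where the paper recasts the conditions on $R_{13}$ and $R_{31}$ as membership of two operators in $\CO_D$ on the two-dimensional space $\cH_1\oplus\cH_3$ and invokes \cite[Cor.~3.3]{ZhuZhao} to conclude they vanish, whereas you verify this directly via the scalar identity $\bar\beta=-\bar\beta$ --- a self-contained substitute for that citation.
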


\begin{proof}
The first half of the result is an easy verification.
Now we turn to the latter half.

Let $R\in\BH$. Assume that \begin{align*}
  R=\begin{pmatrix}
Y&Z&V\\
W&X& Z_1\\
V_1&W_1&Y_1
\end{pmatrix}\begin{matrix}
\cH_1\\ \cH_2 \\ \cH_3
\end{matrix}.
\end{align*}
One can see from direct calculation that $R\in\CO_C$ if and only if
\begin{enumerate}
\item[(i)] $X\in \CO_{C_2}$,
\item[(ii)] $Y_1=-C_1Y^*C_3$, $Z_1=-C_2Z^*C_3$, $W_1=-C_1W^*C_2$, and
\item[(iii)] $ \begin{pmatrix}
0&V\\
0&0
\end{pmatrix}\begin{matrix}
\cH_1\\ \cH_3
\end{matrix}, \ \ \begin{pmatrix}
0&0\\
V_1&0
\end{pmatrix}\begin{matrix}
\cH_1\\ \cH_3
\end{matrix}\in \CO_D.$
\end{enumerate}
Note that $\dim \cH_1\oplus\cH_3=2$. By \cite[Cor. 3.3]{ZhuZhao}, statement (iii) holds if and only if $V=0$ and $V_1=0$.
This completes the proof.
\end{proof}

\begin{lemma}\label{L:eliminate}
Let $T\in\CO_C$ and $p\in[1,\infty]$. If $z\in\Xi(T)$, then $2z\notin[\sigma_\pi(\ad_T)\cup\sigma_\pi(\ad_{T,p})]$.
\end{lemma}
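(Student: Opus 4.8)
Given $z \in \Xi(T)$, I must show that $2z$ is not in the approximate point spectrum of $\ad_T$ nor of $\ad_{T,p}$. Since $\sigma_\pi(\ad_{T,p}) \subset \sigma(\ad_{T,p})$ and we expect these spectra to be controlled by $\sigma(T)+\sigma(T)$, the plan is to show directly that $\ad_T - 2z$ is bounded below on $\CO_C$ (and that its restriction to $\CO_{C,p}$ is bounded below in $\|\cdot\|_p$). The key structural idea is to use the hypothesis $z \in \Xi(T)$, which gives $\dim\cH(z;T)=1$ together with conditions (b), (c), (d), to put $T$ in a convenient $3\times 3$ block form adapted to the conjugation $C$. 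Namely, since $z$ is an isolated point of $\sigma(T)$ with one-dimensional Riesz subspace, by Lemma \ref{L:SSOBasic}(iii) we have $-z \in \Xi(T)$ as well; one considers the spectral decomposition $\cH = \cH_1 \oplus \cH_2 \oplus \cH_3$ where $\cH_1 = \cH(z;T)$, $\cH_3 = \cH(-z;T)$ (when $z \neq 0$; the case $z=0$ forces $\dim\cH_1 = 1$ to be handled separately or is excluded since $2z$ would need the block $\cH_1 = \cH_3$), and $\cH_2$ is the complementary spectral subspace. Because $T \in \CO_C$ satisfies $CTC = -T^*$, the conjugation $C$ must intertwine $\cH_1$ and $\cH_3$, so $C$ has exactly the off-diagonal form required by Lemma \ref{L:thrBythr}.

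With $T$ written as in \eqref{E-temp}, so that $T$ restricted to $\cH_1$ is multiplication by $z$ (rank one), restricted to $\cH_3$ is multiplication by $-z$, and on $\cH_2$ is an operator $T_2 \in \CO_{C_2}$ with $2z \notin \sigma(T_2)+\sigma(T_2)$ and $z \notin \sigma(T_2)$, $-z\notin\sigma(T_2)$ (here conditions (c) and (d) of $\Xi(T)$ are exactly what guarantees $2z$ is isolated from the rest of $\sigma(T)+\sigma(T)$ and is not a sum of two distinct boundary points — hence not produced by the $\cH_2$ part nor by cross terms). Then $\ad_T$ itself becomes a block operator on $\CO_C$ via the decomposition of $\CO_C$ into the matrix pieces $X, Y, Z, W$ (and their $C$-conjugate partners). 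On each block, $\ad_T - 2z$ acts as a generalized Sylvester-type operator $X \mapsto AX - XB$ with $\sigma(A), \sigma(B)$ drawn from $\{\pm z\} \cup \sigma(T_2)$; one checks case by case which blocks have $0$ in the spectrum of $AX-XB-2z$. The crucial point is that the only block where $2z \in \sigma(A)+\sigma(B)$ is the one acting on $Y \in \B(\cH_1)$, i.e. the scalar $\cH_1 \to \cH_1$ corner where $\ad_T - 2z$ becomes $z\cdot Y - Y\cdot(-z) - 2z\cdot Y = 0$ — but this corner is precisely killed because the corresponding entry of an element of $\CO_C$ is $Y \in \B(\cH_1) \cong \bC$ while its $\cH_3\to\cH_3$ partner is $-C_1 Y^* C_3$, and $\ad_T$ acts on the pair; one must check that on the genuine subspace of $\CO_C$ (which is $\{0\}$ in that one-dimensional corner after accounting for skew-symmetry, or contributes a spectral value $\neq 2z$). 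Condition (b) — that $z$ lies in the closure of the unbounded component of $\bC\setminus\sigma(T)$ — is what ensures the semi-Fredholm index data is trivial, so that $\cH_1$ really is the whole local spectral subspace and there are no hidden generalized eigenvectors that would reintroduce $2z$.

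The concrete execution: after the block reduction, write $\ad_T - 2z$ as a $2\times 2$ (or finer) upper/lower triangular operator matrix with respect to a filtration of $\CO_C$ by the "distance from the $\cH_1$-$\cH_3$ corner," show each diagonal entry is invertible except possibly the corner one, and show the corner entry, when restricted to the actual skew-symmetric constraint, is either invertible or has trivial domain. For the invertibility of the Sylvester operators $X \mapsto AX - XB - 2z$ I would invoke the standard fact (as in \cite{Rosenblum} or \cite[Thm. 3.19]{Herr89}) that such an operator on $\B(\cH_i, \cH_j)$, or on the Schatten class $\B_p$, is invertible iff $0 \notin \sigma(A) - \sigma(B) + \{-2z\}$ wait — more precisely iff $2z \notin \sigma(A) + \sigma(B)$, using that here $A$ plays the role of left multiplication and $-B$ appears with a sign; since $T \in \CO_C$ the relevant spectra satisfy $\sigma = -\sigma$ by Lemma \ref{L:SSOBasic}(i), so all the bookkeeping closes up, and the exclusion $2z \notin \sigma(T_2)+\sigma(T_2)$ together with $\pm z \notin \sigma(T_2)$ and $2z \neq 0$ (or the degenerate handling if $z=0$) eliminates every off-corner block.

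The main obstacle I anticipate is the careful treatment of the one-dimensional corner: one must verify that the single potentially-problematic block of $\ad_T - 2z$ genuinely does not contribute $2z$ to the approximate point spectrum, which requires using that $\dim\cH(z;T) = 1$ exactly (not just finite) together with condition (d) ruling out $2z = z_1 + z_2$ for distinct boundary points — these are what force the problematic block to be either zero-dimensional within $\CO_C$ or to be acted on invertibly. A secondary technical point is uniformity: to exclude $2z$ from $\sigma_\pi(\ad_{T,p})$ one needs the boundedness-below estimates to hold in the $\|\cdot\|_p$ norm simultaneously for all $p \in [1,\infty]$, which follows because the Sylvester operators respect Schatten ideals and the block reduction is purely algebraic, but it should be stated explicitly. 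I would also need to dispose of the edge case $z = 0$ separately, where $\cH_1 = \cH_3 = \cH(0;T)$ would have to be reconciled — in fact $\dim\cH(0;T)=1$ with $0 \in \Xi(T)$ forces a slightly different but easier block decomposition, or one observes $0 \notin \sigma_\pi(\ad_T)$ directly from $\ad_T$ being injective with closed range on $\CO_C$ under condition (b).
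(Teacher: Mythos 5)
Your proposal follows essentially the same route as the paper: reduce to a $3\times 3$ decomposition $\cH=\cH_1\oplus\cH_2\oplus\cH_3$ adapted to the two one\nobreakdash-dimensional Riesz spaces at $\pm z$, observe that $C$ becomes anti\nobreakdash-diagonal so that Lemma \ref{L:thrBythr} parametrizes $\CO_C$, and then treat $\ad_T-2z$ as a triangular array of Sylvester operators whose diagonal entries are invertible thanks to conditions (c) and (d) (the paper's Claims that $3z\notin\sigma(A)$ and $2z\notin\sigma(A)+\sigma(A)$). Two points, however, need repair. First, the crux of the argument --- which diagonal block is non-invertible and why it does not matter --- is muddled in your write-up. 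The symbol you compute, $zV-V(-z)-2zV=0$, belongs to the $\cH_3\to\cH_1$ entry, and that entry (together with its $\cH_1\to\cH_3$ partner) is exactly the one that Lemma \ref{L:thrBythr} forces to be \emph{zero} for every member of $\CO_C$; it is not the $\cH_1\to\cH_1$ entry $Y$, which is a genuine free parameter of $\CO_C$ on which the diagonal part of $\ad_T$ acts as $0$, so that $\ad_T-2z$ acts there as multiplication by $-2z\neq 0$ and is invertible. Your hedge ``is $\{0\}$ after accounting for skew-symmetry, or contributes a spectral value $\neq 2z$'' lists both mechanisms, but each applies to a different block, and the proof requires saying which; as written, the claim that the $Y$-corner is killed by skew-symmetry is false. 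Second, the case $z=0$ is not an edge case to be ``handled separately'': the paper disposes of it by showing $0\notin\Xi(T)$ at the outset (if $\sigma(T)=\{0\}$ then $\cH(0;T)=\cH$ violates (a); otherwise $\sigma(T)=-\sigma(T)$ yields distinct $w,-w\in\partial\sigma(T)$ with $w+(-w)=0$, violating (d)), and this observation is needed precisely to make the $Y$-corner invertible, so it cannot be left dangling.

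A smaller technical point: you should take $\cH_3=C\cH(z;T)=\ker(T+z)^*$ (equivalently the orthogonal complement of the Riesz space for $\sigma(T)\setminus\{-z\}$), not the Riesz space $\cH(-z;T)=\ker(T+z)$; the identity $C(T-z)C=-(T+z)^*$ sends $\ker(T-z)$ to $\ker(T+z)^*$, and only with this choice is the decomposition orthogonal with $C$ in the anti-diagonal form required by Lemma \ref{L:thrBythr}. Your appeal to condition (b) for ``trivial semi-Fredholm index data'' is not needed in this lemma (it is used elsewhere, in the converse direction), and the uniformity in $p$ that you flag is handled in the paper exactly as you suggest, via the norm estimates on each block.
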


\begin{proof}
First we claim that $0\notin\Xi(T)$. In fact, if $\sigma(T)=\{0\}$, then $\cH(0;T)=\cH$ is of infinite dimension; if
$\sigma(T)$ contains nonzero elements, then we can choose a nonzero $w\in\partial\sigma(T)$. Since $\sigma(T)=-\sigma(T)$, it follows that $-w\in\partial\sigma(T)$ and $0=(-w)+w$. So we have proved in either case that $0\notin\Xi(T)$.  Hence $z\ne 0$.

Since $z\in\Xi(T)\setminus\{0\}$, it follows that $\{z\}$ and $\{-z\}$ are two disjoint clopen subsets of $\sigma(T)$. Denote $\Gamma=\sigma(T)\setminus\{\pm z\}$. Then $\Gamma$ is a clopen subset of $\sigma(T)$.
Set $\cH_1=\cH(z;T)$, $\cK=\cH(\Gamma\cup\{z\};T)$ and $\cH_3=\cH\ominus \cK$. Then $\cH_1,\cK$ are both invariant under $T$ and $\cH_1\subset \cK$. Denote $\cH_2=\cK\ominus\cH_1$. Then $T$ can be written as
\begin{align}\label{E03}
  T=\begin{pmatrix}
A_1&E&G\\
0&A&F\\
0&0&A_3
\end{pmatrix}\begin{matrix}
\cH_1\\ \cH_2 \\ \cH_3
\end{matrix}.
\end{align}

By Riesz's decomposition theorem, $\sigma(A_1)=\{z\}$, $\sigma(A_3)=\{-z\}$ and $\sigma(A)=\Gamma$.
Note that $z,-z\in\Xi(T)$. Thus $\dim\cH_1=1=\dim\cH_3$. So $A_1=z I_1$ and $A_3=-zI_3$, where
$I_i$ is the identity operator on $\cH_i$, $i=1,3$.

Assume that $\cH_1=\vee\{e\}$, where  $e$ is a unit vector. Note that $(T+z)^*Ce=-C(T-z)e=0$. Thus $Ce\in\ker(T+z)^*$.
From the matrix representation (\ref{E03}), it can be seen that $\ker(T+z)^*=\cH_3$. So $\cH_3=\vee\{Ce\}$, $C(\cH_1)=\cH_3$
and equivalently $C(\cH_3)=\cH_1$. Since $C$ is isometric and $C=C^{-1}$, one can see that $C(\cH_2)=\cH_2$. Thus $C$ can be written as
\[C=\begin{pmatrix}
0&0&C_3\\
0&C_2&0\\
C_1&0&0
\end{pmatrix}\begin{matrix}
\cH_1\\ \cH_2 \\ \cH_3
\end{matrix}. \] Clearly, $C_3=C_1^{-1}$ and $C_2$ is a conjugation on $\cH_2$. Noting that $CTC=-T^*$, we have $C_2AC_2=-A^*$, that is, $A\in\CO_{C_2}$.

%For $X\in \B(\cH_2)$, denote $\dot{X}=-C_2X^*C_2$.

For $Y\in \B(\cH_1)$, denote $\tilde{Y}=-C_1Y^*C_3$. For $Z\in \B(\cH_2,\cH_1)$\ denote $ \hat{Z}=-C_2Z^*C_3$.  For $W\in \B(\cH_1,\cH_2)$, denote $\check{W}=-C_1W^*C_2$. Then, by Lemma \ref{L:thrBythr}, an operator $R$ lies in $\CO_C$ if and only if $R$ can be written as
\begin{align}\label{E-Repren}
  R=\begin{pmatrix}
Y&Z&0\\
W&X&\hat{Z}\\
0&\check{W}&\tilde{Y}
\end{pmatrix}\begin{matrix}
\cH_1\\ \cH_2 \\ \cH_3
\end{matrix},
\end{align} where   $X\in \CO_{C_2}, Y\in \B(\cH_1), Z\in \B(\cH_2,\cH_1)$ and $W\in \B(\cH_1,\cH_2).$
Compute to see
\begin{align}\label{E001}
(\ad_T-2z)(R)&=(T-z)R-R(T+z)\\
&=\begin{pmatrix}
EW-2zY&EX-YE-Z(A+z)&*\\
(A-3z)W&(\ad_A-2z)X+\hat{E}\check{W}-WE&*\\
*&*&*
\end{pmatrix}.
\end{align}

{\it Claim 1.}  $3z\notin\sigma(A)$.

For a proof by contradiction, we assume that $3z\in\sigma(A)$. So $3z\in\sigma(T)$. Note that  $-z\in\sigma(T)$. If $3z$ is an interior point of $\sigma(T)$, then
$2z=3z+(-z)$ is an interior point of $\sigma(T)+\sigma(T)$, contradicting the hypothesis that $2z\in\iso[\sigma(T)+\sigma(T)]$.
Thus $3z\in\partial\sigma(T)$. So $2z$ is the sum of two distinct points  $3z$, $-z$ in $\partial\sigma(T)$, contradicting the fact that $z\in\Xi(T)$.

{\it Claim 2.}  $2z\notin[\sigma(A)+\sigma(A)]$.

For a proof by contradiction, we assume that $2z\in[\sigma(A)+\sigma(A)]$.
By Corollary \ref{C:plane}, we can choose $z_1,z_2\in \partial\sigma(A)$ such that
$2z=z_1+z_2$. Since $z\notin\sigma(A)$, it follows that $z_1\ne z_2$. Note that $\partial\sigma(A)\subset \partial\sigma(T)$.  Thus $2z$ is the sum of two distinct points in $\partial\sigma(T)$, contradicting the fact that $z\in\Xi(T)$.

By Lemmas \ref{L:Defect} and \ref{L:quasi}, Claim 2 implies that both $2z-\ad_A$ and $2z-\ad_{A,p}$ are invertible for $p\in[1,\infty]$.

Now it suffices to show $2z\notin[\sigma_\pi(\ad_T)\cup\sigma_\pi(\ad_{T,p})]$. Assume that $\{R_n:n\geq 1\}\subset\CO_C$ and $(\ad_T-2z)(R_n)\rightarrow 0$ in the norm topology.
In view of (\ref{E-Repren}), we may assume that
\begin{align*}
  R_n=\begin{pmatrix}
Y_n&Z_n&0\\
W_n&X_n&\hat{Z}_n\\
0&\check{W}_n&\tilde{Y}_n
\end{pmatrix}\begin{matrix}
\cH_1\\ \cH_2 \\ \cH_3
\end{matrix},
\end{align*} where   $X_n\in \CO_{C_2}, Y_n\in \B(\cH_1), Z_n\in \B(\cH_2,\cH_1)$ and $W_n\in \B(\cH_1,\cH_2).$

Since $\|(\ad_T-2z)(R_n)\|\rightarrow 0$, it follows from (\ref{E001}) that
\begin{align}\label{E-Reprenn}&\|EW_n-2zY_n\| +\|EX_n-Y_nE-Z_n(A+z)\|\\
&+\|(A-3z)W_n\|+\|(\ad_A-2z)X_n+\hat{E}\check{W_n}-W_nE\| \longrightarrow 0.\end{align}
So Claim 1 implies that $\|W_n\|\rightarrow 0$, and hence $\|\check{W_n}\|\rightarrow 0$ . In view of (\ref{E-Reprenn}), this means $\|Y_n\|\rightarrow 0$ and $\|(\ad_A-2z)X_n\|\rightarrow 0 $.
In view of Lemma \ref{L:Defect} and Claim 2, we have $\|X_n\|\rightarrow 0$. By (\ref{E-Reprenn}), we obtain $\|Z_n(A+z)\|\rightarrow 0$.
Recall that $-z\notin\sigma(A)$. So $\|Z_n\|\rightarrow 0$ and hence $\|\hat{Z_n}\|\rightarrow 0.$
Therefore we conclude that $\|R_n\|\rightarrow 0$. This shows that  $2z\notin \sigma_\pi(\ad_T)$.

Using a similar argument as above, one can show that  $2z\notin \sigma_\pi(\ad_{T,p})$.
\end{proof}

Now we are ready to prove Theorem \ref{T:app}.

\begin{proof}[Proof of Theorem \ref{T:app}]
By Lemma \ref{L:eliminate}, Propositions \ref{P:key} and \ref{L:key}, we have
\[[\sigma_\pi(T)+\sigma_\pi(T)]\setminus\Xi(2T)\subset \sigma_\pi(\ad_T).\]
By Lemmas \ref{L:Defect} and \ref{L:eliminate}, we have
\[\sigma_\pi(\ad_T)\subset [\sigma(T)+\sigma(T)]\setminus\Xi(2T).\]
In view of Corollary \ref{C:sum}, we obtain
\begin{align}\label{E-ap}
\sigma_\pi(\ad_T)=[\sigma(T)+\sigma(T)]\setminus\Xi(2T).
\end{align}

For $p\in [1,\infty]$, by Propositions \ref{P:key} and \ref{L:key}, we have
\[[\sigma_\pi(T)+\sigma_\pi(T)]\setminus\Xi(2T)\subset \sigma_\pi(\ad_{T,p}).\]
By Lemmas \ref{L:quasi} and \ref{L:eliminate}, we have
\[\sigma_\pi(\ad_{T,p})\subset [\sigma(T)+\sigma(T)]\setminus\Xi(2T).\]
In view of Corollary \ref{C:sum}, we obtain
\begin{align}\label{E-app}
\sigma_\pi(\ad_{T,p})=[\sigma(T)+\sigma(T)]\setminus\Xi(2T).
\end{align}
\end{proof}
%%%%%%%%%%%%%%%%%%%%%%%%%%%%%%%
%\subsection{Adjoints and approximate defect spectra}
%This subsection is devoted to estimating approximate defect spectra of Lie derivations of $\CO_C$ .

\subsection{Proofs of Theorems \ref{T:SpecDeriva} and \ref{T:IdealSpecDeriva}}
As an application of the result in Subsection 2.2, we shall describe relations of derivations of $\CO_C$ and $\CO_{C,p}(p\in[1,\infty])$.
Then we shall give the proofs of Theorems \ref{T:SpecDeriva} and \ref{T:IdealSpecDeriva}.

Given a Banach space $\mathcal{X}$, we let $\mathcal{X}'$ denote its dual space. If $A: \mathcal{X}\rightarrow \mathcal{X}$ is a bounded linear operator, we denote by $A'$ the adjoint of $A$ acting on $\mathcal{X}'$.
Two operators $A,B$ acting respectively on two Banach spaces $\mathcal{X}_1$ and $\mathcal{X}_2$ are said to be {\it similar}, denoted as $A\sim B$, if there exists an invertible bounded linear operator $S:\mathcal{X}_1\rightarrow \mathcal{X}_2$ such that $SA=BS$.
%
%The main result of this subsection is the following proposition, which describes the connections among $\ad_T$ and $\ad_{T,p} (p\in[1,\infty])$.

\begin{proposition}\label{L:Trace}
If $T\in\CO_C,$ then
\begin{enumerate}
\item[(i)] $\ad_T\sim -\ad_{T,1}'$,
\item[(ii)] $\ad_{T,1}\sim -\ad_{T,\infty}'$, and
\item[(iii)] $\ad_{T,p}\sim -\ad_{T,q}'$ for $p,q\in(1,\infty)$ with $\frac{1}{p}+\frac{1}{q}=1$.
\end{enumerate}
\end{proposition}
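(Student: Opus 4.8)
The plan is to exploit the duality pairings established in Proposition~\ref{P:dual} and show that under each of those pairings the operator $\ad_T$ (or $\ad_{T,p}$) is transformed into the negative adjoint of $\ad$ on the predual. The key observation is that all three pairings are implemented by the trace form $(X,Y)\mapsto\tr(XY)$, and the trace is cyclic; this is exactly the feature that converts a commutator $[T,\cdot]$ acting on one factor into $-[T,\cdot]$ acting on the other. So in each case the intertwining operator $S$ will be (the restriction/corestriction of) the canonical isometric isomorphism from Proposition~\ref{P:dual}, and the content of the proof is just checking the intertwining identity $S\,\ad_T=-\ad'_{T,\bullet}\,S$ on a dense (in fact, all) of the relevant space.

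\textbf{Step 1 (set up the isomorphisms).} For part~(i), let $\psi:\CO_C\to(\CO_{C,1})'$ be the isometric isomorphism from Proposition~\ref{P:dual}(i), given by $\psi_T(X)=\tr(XT)$ for $X\in\CO_{C,1}$. This $S:=\psi$ is the candidate intertwiner. For part~(ii) take $S$ to be the isomorphism $\CO_{C,1}\to(\CO_{C,\infty})'$ from Proposition~\ref{P:dual}(ii), again given by the trace pairing; for part~(iii) take the isomorphism $\CO_{C,p}\to(\CO_{C,q})'$ from Proposition~\ref{P:dual}(iii). Each $S$ is bounded and invertible, so it remains only to verify the intertwining relation in each case.

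\textbf{Step 2 (verify the intertwining identity via cyclicity of the trace).} I will do this for (i); the other two are identical word-for-word with the roles of the spaces and norms adjusted. Fix $X\in\CO_{C,1}$ and $A\in\CO_C$. By definition of the Banach-space adjoint, for every $B\in\CO_C$,
\[
\bigl((\ad_T)'(\psi_B)\bigr)(X)=\psi_B(\ad_{T,1}(X))=\tr\bigl((TX-XT)B\bigr)=\tr(XBT)-\tr(XTB),
\]
using cyclicity of the trace in the first term (here $TX,XT\in\B_1(\cH)$ and $B$ is bounded, so all traces are defined and cyclicity applies). On the other hand
\[
\psi_{[T,B]}(X)=\tr\bigl(X(TB-BT)\bigr)=\tr(XTB)-\tr(XBT),
\]
so $(\ad_T)'(\psi_B)=-\psi_{[T,B]}=-\psi_{\ad_T(B)}$, i.e. $(\ad_T)'\circ\psi=-\psi\circ\ad_T$. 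Rearranging, $\psi\circ\ad_T=-(\ad_T)'\circ\psi$; but $(\ad_T)'$ acting on $(\CO_{C,1})'$ is by definition the adjoint of $\ad_{T,1}$ on $(\CO_{C,1},\|\cdot\|_1)$, which is bounded by \cite[Thm.~2.3.10]{Ringrose}. Thus $S\,\ad_T=-\ad'_{T,1}\,S$ with $S=\psi$ invertible, which is exactly $\ad_T\sim-\ad'_{T,1}$. For (ii) one pairs $\CO_{C,1}$ against $\CO_{C,\infty}$ and the same computation (now with $X\in\CO_{C,\infty}$, $T\in\CO_{C,1}$, so the trace-class factor sits in the right place) gives $\ad_{T,1}\sim-\ad'_{T,\infty}$; for (iii) with $X\in\CO_{C,q}$ and products landing in $\B_1(\cH)$ via Hölder, the same gives $\ad_{T,p}\sim-\ad'_{T,q}$.

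\textbf{Where the difficulty lies.} There is essentially no deep obstacle here once Proposition~\ref{P:dual} is in hand — the whole proof is the cyclicity computation above. The one point that requires a moment's care is bookkeeping about \emph{which} operator lives in \emph{which} Schatten class, so that every trace $\tr(\cdot)$ written down is actually a trace of a trace-class operator and cyclicity is legitimate: in (i), $TX,XT\in\B_1(\cH)$ since $X\in\B_1(\cH)$; in (ii), $T\in\B_1(\cH)$ carries the products into $\B_1(\cH)$; in (iii), $X\in\B_q(\cH)$ and $T\in\B_p(\cH)$ (wait — $T\in\CO_C$ is only bounded, but $\ad_{T,p}$ maps $\CO_{C,p}$ into itself, so one pairs an element of $\CO_{C,p}$ with an element of $\CO_{C,q}$ and invokes Hölder's inequality for Schatten norms to land in $\B_1(\cH)$). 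Making these membership claims explicit at the start of each case is the only real task; after that the proof is three lines of trace manipulation, and I would simply write out case (i) in full and remark that (ii) and (iii) follow by the same argument mutatis mutandis.
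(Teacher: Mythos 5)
Your proposal is correct and follows essentially the same route as the paper: the intertwining operator is the trace-pairing isomorphism $\psi$ from Proposition \ref{P:dual}, and the identity $\psi\,\ad_T=-\ad_{T,1}'\,\psi$ is verified by the same cyclicity-of-trace computation (the paper likewise writes out only case (i) and omits (ii) and (iii) as analogous). The only cosmetic difference is that you compute $(\ad_{T,1})'(\psi_B)$ directly and compare it with $-\psi_{\ad_T(B)}$, whereas the paper names the two functionals $l_1,l_2$ and shows $l_1=-l_2$; the underlying manipulation is identical.
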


\begin{proof}
We just prove statement (i), since the proofs of (ii) and (iii) follow similar lines.

(i)   By Proposition \ref{P:dual}, there exists the isometrical isomorphism $\psi$ of $\CO_C$ onto $(\CO_{C,1})'$, which is defined by $\psi(Z)=\psi_Z$ for $Z\in\CO_C$, where \[
\psi_Z(X)=\tr(XZ) \ \  \textup{for all} \ \ X\in\CO_{C,1}.\]
Then it suffices to check that $\psi\ad_T=-\ad_{T,1}'\psi.$

Fix $Z\in\CO_C.$ Denote $l_1=[\psi\ad_T](Z)$ and $l_2=[\ad_{T,1}'\psi](Z)$. Then $l_i\in(\CO_{C,1})', i=1,2.$ It suffices to prove that $l_1=-l_2.$ Since
\[[\psi\ad_T](Z)=\psi[\ad_T(Z)]=\psi_{[T,Z]}, \ \ \ [\ad_{T,1}'\psi](Z)=\ad_{T,1}'(\psi_Z),\]
for any $X\in\CO_{C,1}$, we have
\begin{align*}
\l_1(X)&=\psi_{[T,Z]}(X)=\tr([T,Z]X)\\
&=\tr(TZX-ZTX)=-\tr(TXZ-XTZ)\\
&=-\tr([T,X]Z)=-\psi_Z([T,X])\\
&=-\psi_Z[\ad_{T,1}(X)]=-l_2(X).
\end{align*}
This shows that $l_1=-l_2.$
\end{proof}

The following corollary follows from Proposition \ref{L:Trace} and the closed range theorem.

\begin{corollary}\label{C:dual}
Let $T\in\CO_C$ and $1<p,q<\infty$ with $\frac{1}{p}+\frac{1}{q}=1.$ Then
\begin{enumerate}
 \item[(i)] $\sigma_\pi(\ad_{T})=-\sigma_\delta(\ad_{T,1})$ and $\sigma_\delta(\ad_{T})=-\sigma_\pi(\ad_{T,1});$
 \item[(ii)] $\sigma_\pi(\ad_{T,\infty})=-\sigma_\delta(\ad_{T,1})$ and $\sigma_\delta(\ad_{T,\infty})=-\sigma_\pi(\ad_{T,1});$
 \item[(iii)] $\sigma_\pi(\ad_{T,p})=-\sigma_\delta(\ad_{T,q})$ and $\sigma_\delta(\ad_{T,p})=-\sigma_\pi(\ad_{T,q}).$
\end{enumerate}
\end{corollary}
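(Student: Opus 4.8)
The plan is to derive Corollary~\ref{C:dual} from the similarity relations of Proposition~\ref{L:Trace} together with the classical duality, supplied by the closed range theorem, between being bounded below and being surjective.

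First I would record two routine facts. For a bounded operator $A$ on a Banach space $\mathcal{X}$, one has $\sigma_\pi(-A)=-\sigma_\pi(A)$ and $\sigma_\delta(-A)=-\sigma_\delta(A)$, because $-A-z$ is bounded below (respectively surjective) precisely when $A-(-z)$ is; and if two bounded operators are similar, then their approximate point spectra agree and their approximate defect spectra agree, since a similarity carries bounded-below operators to bounded-below operators and surjections to surjections. All operators occurring below are bounded (for $\ad_{T,p}$ see \cite[Thm.~2.3.10]{Ringrose}, and $\ad_T$ is bounded on $\CO_C$), so the adjoint formalism applies without any reflexivity hypothesis.

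The key step is the pair of identities
\[
\sigma_\pi(A)=\sigma_\delta(A')\qquad\text{and}\qquad \sigma_\delta(A)=\sigma_\pi(A')
\]
valid for every bounded operator $A$ on a Banach space. To see the first, note that $A-z$ is bounded below if and only if $A-z$ is injective with closed range, which by the closed range theorem is equivalent to $(A-z)'=A'-z$ being surjective; the second identity is dual (closed range theorem again: $A-z$ is surjective iff $A'-z$ is bounded below). The main point requiring care here is simply applying the closed range theorem to the correct operator, $A-z$ versus $A'-z$, so that $\sigma_\pi$ and $\sigma_\delta$ are not inadvertently swapped.

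Granting this, statement (i) is immediate: Proposition~\ref{L:Trace}(i) gives $\ad_T\sim-\ad_{T,1}'$, whence
\[
\sigma_\pi(\ad_T)=\sigma_\pi(-\ad_{T,1}')=-\sigma_\pi(\ad_{T,1}')=-\sigma_\delta(\ad_{T,1}),
\]
and likewise $\sigma_\delta(\ad_T)=-\sigma_\delta(\ad_{T,1}')=-\sigma_\pi(\ad_{T,1})$. Statements (ii) and (iii) follow in exactly the same fashion, starting from $\ad_{T,1}\sim-\ad_{T,\infty}'$ and from $\ad_{T,p}\sim-\ad_{T,q}'$ (for $1<p,q<\infty$ with $\tfrac1p+\tfrac1q=1$), respectively. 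I do not anticipate any genuine obstacle: the argument is pure soft analysis once Proposition~\ref{L:Trace} is in hand, and the only vigilance needed is over signs and over which of $\sigma_\pi,\sigma_\delta$ dualizes to which.
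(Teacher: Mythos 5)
Your proposal is correct and follows exactly the route the paper intends: the paper's own justification is the one-line remark that the corollary ``follows from Proposition \ref{L:Trace} and the closed range theorem,'' and you have simply supplied the standard details (similarity preserves $\sigma_\pi$ and $\sigma_\delta$, negation reflects them, and the closed range theorem gives $\sigma_\pi(A)=\sigma_\delta(A')$ and $\sigma_\delta(A)=\sigma_\pi(A')$). The sign bookkeeping and the $\sigma_\pi\leftrightarrow\sigma_\delta$ swap under duality are both handled correctly.
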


Now the proof of Theorem \ref{T:SpecDeriva} is an easy corollary.

\begin{proof}[Proof of Theorem \ref{T:SpecDeriva}]
By Corollary \ref{C:dual}, we have $\sigma_\delta(\ad_T)=-\sigma_\pi(\ad_{T,1})$.
It follows from Theorem \ref{T:app} that
\[\sigma_\delta(\ad_{T})=-[\sigma(T)+\sigma(T)]\setminus\Xi(2T).
\] In view of Lemma \ref{L:SSOBasic}, we obtain $\sigma_\delta(\ad_{T})=[\sigma(T)+\sigma(T)]\setminus\Xi(2T)$.
Furthermore,
\begin{align}\label{E-appp}
\sigma(\ad_{T})=\sigma_\pi(\ad_{T})=\sigma_\delta(\ad_{T})=[\sigma(T)+\sigma(T)]\setminus\Xi(2T).
\end{align}

It is well known that
$$\sigma_\pi(\ad_{T})\subset \sigma_l(\ad_{T})\subset \sigma(\ad_{T})\ \ \  \textup{and}
\ \ \ \sigma_\delta(\ad_{T})\subset \sigma_r(\ad_{T})\subset \sigma(\ad_{T}).$$ Thus the desired result follows readily.
\end{proof}

The proof of Theorem \ref{T:IdealSpecDeriva} follows the same line as that of Theorem \ref{T:SpecDeriva}.

\begin{proof}[Proof of Theorem \ref{T:IdealSpecDeriva}]
We just give the proof in the case that  $1<p<\infty$. The proof for $p\in\{1,\infty\}$ is similar.

Assume that $1<q<\infty$ with $\frac{1}{p}+\frac{1}{q}=1$.
By Theorem \ref{T:app}, we have
\begin{align*}
\sigma_\pi(\ad_{T,p})=[\sigma(T)+\sigma(T)]\setminus\Xi(2T)=\sigma_\pi(\ad_{T,q}).
\end{align*}
By Lemma \ref{L:SSOBasic} and Corollary \ref{C:dual},
the latter implies $\sigma_\delta(\ad_{T,p})=[\sigma(T)+\sigma(T)]\setminus\Xi(2T).$
Thus
\begin{align*}
\sigma(\ad_{T,p})=\sigma_\pi(\ad_{T,p})=\sigma_\delta(\ad_{T,p})=[\sigma(T)+\sigma(T)]\setminus\Xi(2T).
\end{align*}

It is well known that
$$\sigma_\pi(\ad_{T,p})\subset \sigma_l(\ad_{T,p})\subset \sigma(\ad_{T,p})\ \ \  \textup{and}
\ \ \ \sigma_\delta(\ad_{T,p})\subset \sigma_r(\ad_{T,p})\subset \sigma(\ad_{T,p}).$$ Thus the desired result follows readily.
\end{proof}

\begin{proof}[Proof of Theorem \ref{T:DerivFinDim}]
We view $T$ as a linear operator on $\bC^n$ (endowed with the usual inner product).
It suffices to prove that a point $z$ lies in $\Xi(T)$ if and only if $\textup{rank}(T-z)^2=n-1$ and there exist no distinct
$z_1,z_2\in \sigma(T)$ such that $2z=z_1+z_2$.

Since $T$ acts on an $n$-dimensional space, it follows that $\sigma(T)=\partial\sigma(T)=\iso\sigma(T)$.
Thus $\sigma(T)+\sigma(T)=\iso[\sigma(T)+\sigma(T)]$. Also $\textup{rank}(T-z)^2=n-1$ if and only if $\ker(T-z)^2=1$ or, equivalently,
$\dim\cH(z;T) =1$.
\end{proof}

\begin{remark}\label{R:comparison}
Let $T\in\CO_C$. By Lemma \ref{L:SSOBasic} and Corollary \ref{C:sum},
\begin{align*}
  \sigma_\pi(T)-\sigma_\delta(T)&=\sigma_\pi(T)+\sigma_\pi(T)=\sigma(T)+\sigma(T)\\
  &= -[\sigma(T)+\sigma(T)]=-[\sigma_\pi(T)+\sigma_\pi(T)]=\sigma_\delta(T)-\sigma_\pi(T).
\end{align*}
In view of \cite[Thm. 3.19]{Herr89}, we have
\begin{align*}
\sigma(\delta_T)=\sigma_l(\delta_T)=\sigma_\pi(\delta_T)=\sigma_r(\delta_T)=\sigma_\delta(\delta_T)=\sigma(T)+\sigma(T).
\end{align*}
Moreover, in view of \cite[Thm. 3.53 \& 3.54]{Herr89} or \cite{Fialkow}, we have
\begin{align*}
\sigma(\delta_T|_{\mathcal{J}})=\sigma_l(\delta_T|_{\mathcal{J}})=\sigma_\pi(\delta_T|_{\mathcal{J}})=\sigma_r(\delta_T|_{\mathcal{J}})=\sigma_\delta(\delta_T|_{\mathcal{J}})=\sigma(T)+\sigma(T),
\end{align*} where $\mathcal{J}=\B_p(\cH)$ for $p\in[1,\infty]$.
Thus, like $\ad_T$, the left spectra, the right spectra, the approximate point spectra and the approximate defect spectra of $\delta_T$ and $\delta_T|_{\mathcal{J}}$ all coincide.
\end{remark}

We conclude this paper with two examples.
First we give an example of $T\in\CO_C$ with $\Xi(T)\ne \emptyset$.

\begin{example}\label{E:sharp1}
Let $C$ be a conjugation on $\cH$ and $\{e_n\}_{n\geq 1}$ be an orthonormal basis of $\cH$ with $Ce_n=e_n$ for all $n$.
Define $T=\textrm{i}(e_1\otimes e_2-e_2\otimes e_1)$.
It is easy to check that $\sigma(T)=\{0,1,-1\}$ and $ \Xi(T)=\{-1,1\}$.
Then, by Theorem \ref{T:SpecDeriva}, $\sigma(\ad_T)=\{0\}$.
\end{example}

Here is an example of $T\in\CO_C$ with $\iso\sigma(T)\ne \emptyset=\Xi(T)$.

\begin{example}\label{E:sharp2}
Let $C$ be a conjugation on $\cH$ and $\{e_n\}_{n\geq 1}$ be an orthonormal basis of $\cH$ with $Ce_n=e_n$ for all $n$.
Define $T=\textrm{i}(e_1\otimes e_2-e_2\otimes e_1)+2\textrm{i}(e_3\otimes e_4-e_4\otimes e_3)+2\textrm{i}(e_5\otimes e_6-e_6\otimes e_5)$.
It is easy to check that $\sigma(T)=\{0,\pm1,\pm2\}$ and $ \Xi(T)=\emptyset$. Then, by Theorem \ref{T:SpecDeriva}, $$\sigma(\ad_T)=\sigma(T)+\sigma(T)=\{0,\pm1,\pm2,\pm 3,\pm 4\}.$$
\end{example}

 %%%%%%%%%%%%%

\section*{Acknowledgements}

This work was supported by the National Science Foundation of China (grant number 11671167).

%%%%%%%%%%%%%%%%%%%%%%%%%%%%%%%%%%%%%%%%%%%%

\end{document}